\definecolor{ududff}{rgb}{0.30196078431372547,0.30196078431372547,1}
\newtheorem{theorem}{Theorem}[section]
\newtheorem{lemma}[theorem]{Lemma}
\newtheorem*{Thm1}{Theorem 1}
\newtheorem*{Thm2}{Theorem 2}
\theoremstyle{definition}
\newtheorem{rem}[theorem]{Remark}
\newenvironment{enumeratei}{\begin{enumerate}[\upshape (a)]}
    {\end{enumerate}}
\def\irr#1{{\rm Irr}(#1)}
\def\cent#1#2{{\bf C}_{#1}(#2)}
\def\syl#1#2{{\rm Syl}_#1(#2)}
\def\nor{\trianglelefteq\,}
\def\oh#1#2{{\bf O}_{#1}(#2)}
\def\zent#1{{\bf Z}(#1)}
\def\V#1{{V}(#1)}
\def\sbs{\subseteq}
\def\fit#1{{\bf F}(#1)}
\def\frat#1{{\bf \Phi}(#1)}
\newcommand{\N}{{\mathbb N}}
\newcommand{\F}{{\mathbb F}}
\newcommand{\K}{{\mathbb K}}
\def\irr#1{{\rm Irr}(#1)}
\def\cent#1#2{{\bf C}_{#1}(#2)}
\def\syl#1#2{{\rm Syl}_#1(#2)}
\def\nor{\trianglelefteq\,}
\def\norm#1#2{{\bf N}_{#1}(#2)}
\def\oh#1#2{{\bf O}_{#1}(#2)}
\def\zent#1{{\bf Z}(#1)}
\def\sbs{\subseteq}
\def\fit#1{{\bf F}(#1)}
\def\SL#1{{\rm SL}_{2}(#1)}
\def\PSL#1{{\rm PSL}_{2}(#1)}
\def\V#1{{\rm V}(#1)}
\def\irr#1{{\rm Irr}(#1)}
\def\cd#1{{\rm cd}(#1)}
\def\cent#1#2{{\bf C}_{#1}(#2)}
\def\syl#1#2{{\rm Syl}_#1(#2)}
\def\oh#1#2{{\bf O}_{#1}(#2)}
\def\zent#1{{\bf Z}(#1)}
\def\ker#1{{\rm ker}(#1)}
\def\norm#1#2{{\bf N}_{#1}(#2)}
\def\sbs{\subseteq}
\def \o#1{\overline{#1}}
\mathchardef\coso="2023
\def \nq{\mathcal{N}_q}
\begin{document}

\title[Degree graphs with a cut-vertex. III]{Non-solvable groups whose character degree graph has a cut-vertex. III}

\author[]{Silvio Dolfi}
\address{Silvio Dolfi, Dipartimento di Matematica e Informatica U. Dini,\newline
Universit\`a degli Studi di Firenze, viale Morgagni 67/a,
50134 Firenze, Italy.}
\email{silvio.dolfi@unifi.it}

\author[]{Emanuele Pacifici}
\address{Emanuele Pacifici, Dipartimento di Matematica e Informatica U. Dini,\newline
Universit\`a degli Studi di Firenze, viale Morgagni 67/a,
50134 Firenze, Italy.}
\email{emanuele.pacifici@unifi.it}

\author[]{Lucia Sanus}
\address{Lucia Sanus, Departament de Matem\`atiques, Facultat de Matem\`atiques, \newline Universitat de Valencia, Burjassot,
46100 Valencia, Spain.}
\email{lucia.sanus@uv.es}


\thanks{The first two authors are partially supported by the italian INdAM-GNSAGA. The research of the third author is partially supported by Ministerio de Ciencia e Innovaci\'on PID2019-103854GB-I00.  This research has been carried out during a visit of the third author at the Dipartimento di Matematica e Informatica ``Ulisse Dini" (DIMAI) of Universit\`a degli Studi  di Firenze.  She thanks  DIMAI for the financial support and hospitality.}

\keywords{Finite Groups; Character Degree Graph.}
\subjclass[2020]{20C15}

\begin{abstract} 
Let $G$ be a finite group. Denoting by $\cd{G}$ the set of the degrees of the irreducible complex characters of $G$, we consider the {\it character degree graph} of \(G\): this is the (simple, undirected) graph whose vertices are the prime divisors of the numbers in $\cd{G}$, and two distinct vertices $p$,  $q$ are adjacent if and only if $pq$ divides some number in $\cd{G}$. This paper completes the classification, started in \cite{DPSS2} and \cite{DPSS}, of the finite non-solvable groups whose character degree graph has a {\it cut-vertex}, i.e. a vertex whose removal increases the number of connected components of the graph. More specifically, it was proved in \cite{DPSS} that these groups have a unique non-solvable composition factor $S$, and that $S$ is isomorphic to a group belonging to a restricted list of non-abelian simple groups. In \cite{DPSS2} and \cite{DPSS} all isomorphism types for $S$ were treated, except the case \(S\cong\PSL{2^a}\) for some integer \(a\geq 2\); the remaining case is addressed in the present paper.
\end{abstract}

\maketitle
\section{Introduction}

The {\it character degree graph} \(\Delta(G)\) of a finite group $G$ is a very useful tool for studying the arithmetical structure of the set \(\cd G=\{\chi(1):\chi\in\irr G\}\), i.e. the set of the irreducible (complex) character degrees of $G$. As many results in the literature show, there is a profound interaction between the group structure of $G$ and certain graph-theoretical properties (in particular, connectivity properties) of \(\Delta(G)\). 

In the papers \cite{DPSS2} and \cite{DPSS} we considered the problem of classifying the finite non-solvable groups $G$ such that $\Delta(G)$ has a {\it cut-vertex}, which is a vertex whose removal (together with all the edges incident to it) produces a graph having more connected components than the original. Among the various properties of such a group $G$, it is proved in \cite{DPSS} that $G$ has a unique non-solvable composition factor $S$, and that $S$ is isomorphic to one of the simple groups in the following list: the projective special linear group $\PSL{t^a}$ (where $t^a$ is a prime power greater than $3$), the Suzuki group ${\rm Sz}(2^a)$ (where $2^a-1$ is a prime number), ${\rm PSL}_3(4)$, the Mathieu group ${\rm M}_{11}$, and the first Janko group ${\rm J}_1$. The aforementioned papers carry out an analysis (and provide a complete classification) of all the possibilities, except for the case $S\cong\PSL{2^a}$ when \(\Delta(G)\) is connected; the present work addresses the remaining case, thus completing the classification of these groups. We refer the reader to \cite{DPSS2} and \cite{DPSS} for a thorough description of the problem and, in particular, for the full statements of the relevant theorems (see the introductions of \cite{DPSS2} and \cite{DPSS}, and Section~2 of \cite{DPSS}).

The situation that remains to be studied is treated in the following Theorem~1 and Theorem~2, which deal with the cases $2^a>4$ and $2^a=4$, respectively (see \cite[Theorem~A, Case~(f)]{DPSS}, and \cite[Theorem~B]{DPSS}), and which are the main results of this paper. 

In order to clarify the statements we mention that, for $H =  {\rm SL}_2(t^a)$ (where $t^a$ is a prime power),  an $H$-module $V$
over the field \(\mathbb{F}_t\) of order $t$ is called {\it{the natural module for $H$}} if $V$ is isomorphic to the standard module for ${\rm SL}_2(t^a)$, or any of its Galois conjugates, seen as an $\mathbb{F}_t[H]$-module.
We will freely use this terminology also referred to the conjugation action of a group on a suitable elementary abelian normal subgroup. 
For our purposes, it is important to recall that the standard module for ${\rm SL}_2(t^a)$ is self-dual. 

Also, given a finite group $G$, we denote by $R = R(G)$ the {\it{solvable radical}} (i.e. the largest solvable normal subgroup), and
by $K = K(G)$ the {\it{solvable residual}} (i.e. the smallest normal subgroup with a solvable factor group) of $G$. Equivalently, $K(G)$ is the last term of the derived series of $G$.

\begin{Thm1}
  Let $R$ and $K$ be, respectively, the solvable radical and the solvable residual of the finite group $G$ and assume that
  $G$ has a composition factor $S \cong \SL{2^a}$, with $a \geq 3$.
  Then, $\Delta(G)$ is a connected graph and has a cut-vertex $p$ if and only if $G/R$ is an almost simple group with socle
  isomorphic to $S$,
  $\V G = \pi(G/R) \cup \{ p\}$ and one of the following holds.
  
    \begin{enumeratei}
 \item \(K\cong S\) is a minimal normal subgroup of \(G\); also, {\it either} $p=2$ and $\V{G/K}\cup\pi(G/KR)=\{2\}$, {\it or} $p\neq 2$, $\V{G/K}=\{p\}$, and $G/KR$ has odd order.
\item \(K\) contains a minimal normal subgroup \(L\) of \(G\) such that \(K/L\cong S\) and \(L\) is the natural module for \(K/L\); also, $p\neq 2$, $\V{G/K}=\{p\}$, $G/KR$ has odd order and, for a Sylow $2$-subgroup $T$ of $G$, we have $T'=(T\cap K)'$.
\end{enumeratei}
  In all cases, $p$ is a complete vertex and the unique cut-vertex of $\Delta(G)$.
\end{Thm1}

\begin{Thm2}
  Let $R$ and $K$ be, respectively, the solvable radical and the solvable residual of the finite group $G$ and assume that
  $G$ has a composition factor $S \cong \SL{4}$. 
  Then, $\Delta(G)$ is a connected graph and has a cut-vertex $p$ if and only if $G/R$ is an almost simple group with socle isomorphic to $S$,
  $\V G = \{2,3,5\} \cup \{ p\}$ and one of the following holds.
  
  \begin{enumeratei}
  \item $K$ is isomorphic either to $\SL 4$ or to $\SL 5$, and $\V {G/K} =\{p\}$; if $p=5$, then $K\cong\SL 4$ and $G=K\times R$.
  \item  $K$ contains a minimal normal subgroup \(L\) of $G$ with $|L| = 2^4$. Moreover,  $G = KR$ and
\begin{enumeratei}
 \item[{\rm{(i)}}]either $L$  is  the natural module for \(K/L\), $p \neq 2$ and $\V{G/K}=\{p\}$, 
 \item[{\rm{(ii)}}] or $L$   is isomorphic to  the restriction  to   \(K/L\), embedded  as  $\Omega_4^-(2)$ into
   $\rm{SL}_4(2)$,  of the standard module of $\rm{SL}_4(2)$. Moreover $p = 5$, $G=K\times R_0$ where $R_0=\cent G K$, and  $\V{R_0}=\V{G/K} \subseteq  \{ 5\}$.
\end{enumeratei}
\item $K$ contains a minimal normal subgroup \(L\) of \(G\) such that $K/L$ is isomorphic to $\SL{5}$, and 
\begin{enumeratei}
 \item[{\rm{(i)}}]  either $L $ is the natural module for \(K/L\), $p\not =5$ and $\V {G/K}=\{p\}$,
  \item[{\rm{(ii)}}] or \(L\) is isomorphic to the restriction to  \(K/L\),  embedded in  \({\rm{SL}}_4(3)\), of the standard module of $\rm{SL}_4(3)$,  $p=2$ and $\V {G/K}\sbs \{2\}$.
\end{enumeratei}
\end{enumeratei} 
In all cases, \(p\) is a complete vertex and the unique cut-vertex of \(\Delta(G)\).

\end{Thm2}

To conclude this introduction, we display in Table~1 the graphs related to the groups as in Theorem~1 and Theorem~2, so, all the possible connected graphs having a cut-vertex $p$, of the form \(\Delta(G)\) where $G$ is a finite group with a composition factor isomorphic to $\SL{2^a}$, $a\geq 2$. The first row of the table shows the graphs arising from Theorem~1, whereas the second row shows the graphs arising from Theorem~2 {\it in the case when $p$ is larger than $5$}. As regards the remaining graphs coming from Theorem~2, they are displayed in the third row of the table, and they are all the paths of length \(2\) with vertex set \(\{2,3,5\}\). Each of them actually occur for groups as in Theorem~2(a) (it is enough to consider the direct product \(\SL 4\times R\) where \(R\) is a non-abelian \(q\)-group, for \(q\in\{2,3,5\}\)). Also, case (b)(ii) is associated to the path \(2-5-3\), and case (c)(ii) to the path \(3-2-5\). 

All the groups considered in the following discussion will be tacitly assumed to be finite groups.

\begin{figure}[h] 
\begin{minipage}{0.9\textwidth}

\begin{tikzpicture}[line cap=round,line join=round,>=triangle 45,x=1.0cm,y=1.0cm]
\clip(-0.8,-3) rectangle (12.845845363984832,11.688625555132022);
\draw (5.6154821265091276,11.5) node[anchor=north west] {\scriptsize TABLE 1};
\draw [line width=0.8pt] (0.6545454545454547,1.994545454545455)-- (2.,2.);
\draw [line width=0.8pt] (3.,3.)-- (2.,2.);
\draw [line width=0.8pt] (2.,2.)-- (3.,1.);
\draw (0.4721015492819798,1.9) node[anchor=north west] {\scriptsize $2$};
\draw (1.819878975217169,1.9) node[anchor=north west] { \scriptsize $p$};
\draw (2.837881073529918,3.5) node[anchor=north west] {\scriptsize $3$};
\draw (2.8665571889753476,0.93) node[anchor=north west] {\scriptsize $5$};
\draw [line width=0.8pt] (4.672727272727274,1.9763636363636368)-- (6.,2.);
\draw [line width=0.8pt] (7.,3.)-- (6.,2.);
\draw [line width=0.8pt] (6.,2.)-- (7.,1.);
\draw (4.515433827087548,1.9530843614086506) node[anchor=north west] {\scriptsize$5$};
\draw (5.748506791241018,1.9817604768540802) node[anchor=north west] {\scriptsize$p$};
\draw (6.838199178167342,3.5) node[anchor=north west] {\scriptsize$3$};
\draw (6.866875293612771,0.93) node[anchor=north west] {\scriptsize$2$};
\draw [line width=0.8pt] (8.56286759390931,2.021974094242668)-- (9.908322139363856,2.0274286396972134);
\draw [line width=0.8pt] (10.908322139363856,3.0274286396972134)-- (9.908322139363856,2.0274286396972134);
\draw [line width=0.8pt] (9.908322139363856,2.0274286396972134)-- (10.908322139363856,1.0274286396972132);
\draw (8.42972358538868,1.93) node[anchor=north west] {\scriptsize$2$};
\draw (9.7,1.93) node[anchor=north west] {\scriptsize$p$};
\draw (10.76682699419119,3.5) node[anchor=north west] {\scriptsize$3$};
\draw (10.781165051913904,0.93) node[anchor=north west] {\scriptsize$5$};
\draw (8.6,0.43) node[anchor=north west] {\scriptsize$\text{ Theorem 2(b)(i)}$};
\draw (-0.2,0.43) node[anchor=north west] {\scriptsize$\text{ Theorem 2(a), } G=\SL{4}\times R$};
\draw (4.2,0.43) node[anchor=north west] {\scriptsize$\text{Theorem 2(a), } G\not =\SL{4}\times R$};
\draw (5.12,0.16082714606929635) node[anchor=north west] {\scriptsize$\text{ Theorem~2(c)(i)}$};
\draw [line width=0.8pt] (0.,4.)-- (12.67272727272727,3.969090909090908);
\draw [line width=0.8pt] (0.026534780134183478,-0.85159078922567546)-- (12.699262052861453,-0.8924998801347664);
\draw (3.1389802857069284,-0.27159078922567546) node[anchor=north west] {\scriptsize $\text{ (The graphs arising from Theorem~2 when $p> 5$) }$};
\draw [line width=0.8pt] (7.,3.)-- (7.,1.);
\draw [line width=0.8pt] (10.908322139363856,3.0274286396972134)-- (10.908322139363856,1.0274286396972132);
\draw [line width=0.8pt] (0.,11.)-- (12.672727272727276,10.969090909090909);
\draw [line width=0.8pt] (1.1899742184621827,9.02960989734526) circle (1.0053670359753395cm);
\draw [line width=0.8pt] (3.2,9.015359999363264) circle (1.0053670359753422cm);
\draw (0.2,9.179) node[anchor=north west] {{\tiny $\pi(2^a-1)\cup\{2\}$}};
\draw (2.2,9.179) node[anchor=north west] {{\tiny $\pi(2^a+1)\cup\{2\}$}};
\draw (2.03,9.75) node[anchor=north west] {\scriptsize$2$};
\draw (2.7,10.5) node[anchor=north west] {\scriptsize(clique)};
\draw (0.6,10.5) node[anchor=north west] {\scriptsize(clique)};
\draw (0.67100233710496943,7.358532122872143) node[anchor=north west] {\scriptsize $\text{Theorem 1(a), } p=2$};
\draw [line width=0.8pt] (7.,9.) circle (1.0053670359753404cm);
\draw [line width=0.8pt] (8.39,9.029609897345264) circle (1.0053670359753388cm);
\draw (6.2,10.5) node[anchor=north west] {\scriptsize(clique)};
\draw (8.0,10.5) node[anchor=north west] {\scriptsize(clique)};
\draw (7.497749833412221,9.0) node[anchor=north west] {\scriptsize$\pi_0$};
\draw (7.469073717966791,9.42) node[anchor=north west] {\scriptsize$p$};
\draw [line width=0.8pt] (7.628081865416518,9.43)-- (7.628081865416518,10.272038341433083);
\draw (7.469073717966791,10.75) node[anchor=north west] {\scriptsize$2$};
\draw (6.522761908267617,9.30850797316136) node[anchor=north west] {\scriptsize$\pi_1$};
\draw (8.472737758556825,9.322846030884076) node[anchor=north west] {\scriptsize$\pi_2$};
\draw (5.9,8.08) node[anchor=north west] {\scriptsize $\pi_1\cup\pi_0=\pi(2^a-1)\cup\pi(G/KR)\cup \{p\}$};
\draw (5.9,7.74) node[anchor=north west] {\scriptsize $\pi_2\cup\pi_0=\pi(2^a+1)\cup\pi(G/KR)\cup \{p\}$};
\draw (6.406591830249884,7.358532122872143) node[anchor=north west] {\scriptsize$\text{Theorem 1(a), } p\not=2$};
\draw [line width=0.8pt] (5.388329704963107,5.587731866300052) circle (1.0053670359753395cm);
\draw (6.1069582343088875,6.584277005845542) node[anchor=north west] {\scriptsize(clique)};
\draw (3.45,6.154135274164097) node[anchor=north west] {\scriptsize$2$};
\draw (4.5,5.9) node[anchor=north west] {\scriptsize$p$};
\draw [line width=0.8pt] (3.6484469972163645,5.758867870340715)-- (4.3974175800501625,5.757602516285128);
\draw (4.128306268574248,4.58) node[anchor=north west] {\scriptsize $\text{Theorem 1(b)}$};
\draw [line width=0.8pt] (0.6963509275149233,-1.34327629734679)-- (1.4807242793679631,-1.34327629734679);
\draw [line width=0.8pt] (1.4807242793679631,-1.34327629734679)-- (2.2365749638808925,-1.3575376310168452);
\draw [line width=0.8pt] (5.046057696881782,-1.4003216320270104)-- (5.830431048734821,-1.4003216320270104);
\draw [line width=0.8pt] (5.830431048734821,-1.4003216320270104)-- (6.686111068938137,-1.4003216320270104);
\draw [line width=0.8pt] (8.639913781735709,-1.4573669667072313)-- (9.45280980092886,-1.4573669667072313);
\draw [line width=0.8pt] (9.45280980092886,-1.4573669667072313)-- (10.237183152781899,-1.4573669667072313);
\draw (1.3,-1.48) node[anchor=north west] {\scriptsize$2$};
\draw (4.842561385600847,-1.48) node[anchor=north west] {\scriptsize$2$};
\draw (8.5,-1.48) node[anchor=north west] {\scriptsize$2$};
\draw (0.51,-1.48) node[anchor=north west] {\scriptsize$3$};
\draw (5.719830675795589,-1.48) node[anchor=north west] {\scriptsize$3$};
\draw (10.1,-1.48) node[anchor=north west] {\scriptsize$3$};
\draw (2.1,-1.48) node[anchor=north west] {\scriptsize$5$};
\draw (6.5,-1.48) node[anchor=north west] {\scriptsize$5$};
\draw (9.3,-1.48) node[anchor=north west] {\scriptsize$5$};
\draw [line width=0.8pt] (0.005939202842223601,-2.6814326654051786)-- (12.678666475569502,-2.7123417563142698);
\draw (3.1389802857069284,-1.9336284936240807) node[anchor=north west] {\scriptsize $\text{ (The graphs arising from Theorem~2 when $p\leq 5$) }$};
\begin{scriptsize}
\draw [fill=black] (0.6545454545454547,1.994545454545455) circle (1.5pt);
\draw [fill=black] (2.,2.) circle (1.5pt);
\draw [fill=black] (3.,3.) circle (1.5pt);
\draw [fill=black] (3.,1.) circle (1.5pt);
\draw [fill=black] (4.672727272727274,1.9763636363636368) circle (1.5pt);
\draw [fill=black] (6.,2.) circle (1.5pt);
\draw [fill=black] (7.,3.) circle (1.5pt);
\draw [fill=black] (7.,1.) circle (1.5pt);
\draw [fill=black] (8.56286759390931,2.021974094242668) circle (1.5pt);
\draw [fill=black] (9.908322139363856,2.0274286396972134) circle (1.5pt);
\draw [fill=black] (10.908322139363856,3.0274286396972134) circle (1.5pt);
\draw [fill=black] (10.908322139363856,1.0274286396972132) circle (1.5pt);
\draw [fill=black] (2.18439766419588,9.1) circle (1.5pt);
\draw [fill=black] (7.628081865416518,9.43) circle (1.5pt);
\draw [fill=black] (7.628081865416518,10.272038341433083) circle (1.5pt);
\draw [fill=black] (3.6484469972163645,5.758867870340715) circle (1.5pt);
\draw [fill=black] (4.3974175800501625,5.757602516285128) circle (1.5pt);
\draw [fill=black] (0.6963509275149233,-1.34327629734679) circle (1.5pt);
\draw [fill=black] (1.4807242793679631,-1.34327629734679) circle (1.5pt);
\draw [fill=black] (2.2365749638808925,-1.3575376310168452) circle (1.5pt);
\draw [fill=black] (5.046057696881782,-1.4003216320270104) circle (1.5pt);
\draw [fill=black] (5.830431048734821,-1.4003216320270104) circle (1.5pt);
\draw [fill=black] (6.686111068938137,-1.4003216320270104) circle (1.5pt);
\draw [fill=black] (8.639913781735709,-1.4573669667072313) circle (1.5pt);
\draw [fill=black] (9.45280980092886,-1.4573669667072313) circle (1.5pt);
\draw [fill=black] (10.237183152781899,-1.4573669667072313) circle (1.5pt); 
\end{scriptsize}
\end{tikzpicture}
\end{minipage}

\end{figure}

\section{Preliminaries}

Given a group \(G\), we denote by \(\Delta(G)\) the character degree graph (or {\it degree graph} for short) of \(G\) as defined in the Introduction. Our notation concerning character theory is standard, and we will freely use basic facts and concepts such as Ito-Michler's theorem, Clifford's theory, Gallagher's theorem, character triples and results about extension of characters (see \cite{Is}). 

For a positive integer \(n\), the set of prime divisors of \(n\) will be denoted by \(\pi(n)\), and we simply write \(\pi(G)\) for \(\pi(|G|)\). If \(q\) is a prime power, then the symbol \(\mathbb{F}_q\) will denote the field of order \(q\).

\smallskip
We start by recalling some structural properties of the groups \(\SL{2^a}\). 

\begin{rem}\label{Subgroups}
The group \(\SL{2^a}=\PSL{2^a}\) has order \(2^a(2^a-1)(2^a+1)\), and the proper subgroups of this group are of the following types (\cite[II.8.27]{Hu}):
\begin{itemize}
\item[(i$_+$)] dihedral groups of order $2(2^a+ 1)$ and their subgroups; 
\item[(i$_-$)] dihedral groups of order $2(2^a- 1)$ and their subgroups; 
\item[(ii)] Frobenius groups with elementary abelian kernel of order $2^a$ and cyclic complements of order $2^a -1$, and their subgroups;
\item[(iii)] $A_4$ when $a$ is even or  $A_5$ when $5$ divides $|\SL{2^a}|$; 
\item[(iv)] $\SL{2^b}$, where $b$ is a proper divisor of $a$.  
\end{itemize}  

\end{rem}

When dealing with subgroups of \(\SL{2^a}\), we will refer to the above labels to identify the type of these subgroups. By a subgroup of type (i) we will mean a subgroup that is either of type (i$_-$) or of type (i$_+$).

\begin{lemma}
\label{PSL2}
Let \(G\cong\SL{2^a}\), where \(a\geq 2\). Let \(u\) be a prime divisor of \(2^a-1\), and let \(U\) be a subgroup of \(G\) with \(|U|=u^b\) for a suitable \(b\in\N-\{0\}\). Then \(U\) lies in the normalizer in \(G\) of precisely two Sylow \(2\)-subgroups of \(G\).
\end{lemma}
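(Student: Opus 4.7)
The plan is to count the Sylow $2$-subgroups of $G$ normalized by $U$ by first reducing, via a centralizer argument, to the count of those whose normalizers contain a fixed cyclic Hall $(2^a-1)$-subgroup $T_0$, and then obtaining that count by a simple double count. First I would note that $u$ is odd and coprime to $2^a(2^a+1)$, so a Sylow $u$-subgroup of $G$ lies inside the cyclic complement of a Frobenius (type (ii)) maximal subgroup from Remark~\ref{Subgroups}; thus $U$ is cyclic, say $U = \langle y\rangle$. I would also record from Remark~\ref{Subgroups} that for any Sylow $2$-subgroup $Q$ one has $\norm G Q = Q\rtimes T_Q$ with $|T_Q| = 2^a-1$ (type (ii)), and that $G$ has exactly $2^a+1$ Sylow $2$-subgroups.

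The key step will be to show $\cent G y = T_0$, where $T_0$ is any cyclic subgroup of order $2^a-1$ containing $U$. Going through the subgroup types of Remark~\ref{Subgroups}: an overgroup of type (i$_-$) properly containing $T_0$ would include an involution inverting $y$, contradicting that $y$ has odd order; type (i$_+$) has order coprime to $|T_0|$; in type (ii) the Frobenius property gives $\cent Q y = 1$ and forces the centralizer down to $T_0$; types (iii) and (iv) can contain $T_0$ only for small $a$, in which cases a direct inspection again yields $\cent G y = T_0$. This in particular makes $T_0$ the unique cyclic Hall $(2^a-1)$-subgroup containing $U$. Now if $U \leq \norm G Q = Q\rtimes T_Q$, then the $u$-group $U$ lies in some $Q$-conjugate $T_Q^{q}$ (Hall's theorem inside the Frobenius group). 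Since $T_Q^{q}$ is abelian of order $2^a-1$ and contains $y$, it lies in $\cent G y = T_0$, so the order count forces $T_Q^{q} = T_0$ and hence $T_0 \leq \norm G Q$. The converse being trivial, $U$ and $T_0$ normalize exactly the same Sylow $2$-subgroups.

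The remaining count is then a clean double count. By Remark~\ref{Subgroups}, $|\norm G{T_0}| = 2(2^a-1)$, so $G$ has $|G|/(2(2^a-1)) = 2^{a-1}(2^a+1)$ cyclic subgroups of order $2^a-1$. Each of the $2^a+1$ normalizers $\norm G Q = Q\rtimes T_Q$ contains exactly $|Q|=2^a$ such cyclic subgroups (the $Q$-conjugates of $T_Q$). Equating the two ways of counting pairs $(T,\norm G Q)$ with $T\leq \norm G Q$ yields $n = 2$, where $n$ is the number of Sylow $2$-subgroup normalizers containing a fixed $T_0$, which completes the proof. The main potential obstacle is the centralizer identification $\cent G y = T_0$: while it is standard for $\PSL{2^a}$, carrying it out strictly from Remark~\ref{Subgroups} calls for a short case analysis of the exceptional subgroups of types (iii) and (iv) in small $a$.
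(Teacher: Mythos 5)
This paper gives no argument of its own for the lemma --- the ``proof'' is just a citation of \cite[Lemma~2.2]{DPSS2} --- so a line-by-line comparison is not possible; judged on its own, your proof is correct. The reduction from $U$ to a full cyclic Hall $(2^a-1)$-subgroup $T_0$ via the identification $\cent Gy=T_0$ is sound: type (i$_+$) is excluded because $2(2^a+1)$ is coprime to $2^a-1$, the only overgroup of $T_0$ of type (i$_-$) is the full dihedral group, whose outer involutions invert the odd-order element $y$, the Frobenius property forces $\cent Qy=1$ in type (ii), and types (iii)--(iv) can contain a cyclic subgroup of order $2^a-1$ only when $a=2$, where inspection of $A_5$ settles the matter. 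The transfer step ($U\leq\norm GQ$ iff $T_0\leq\norm GQ$, via Hall's theorem in the solvable Borel subgroup) and the final double count $2^{a-1}(2^a+1)\cdot n=(2^a+1)\cdot 2^a$, giving $n=2$, are both correct. The standard shortcut --- and very likely the route taken in \cite{DPSS2} --- is to use the doubly transitive action of $\SL{2^a}$ on the projective line $\F_{2^a}\cup\{\infty\}$: the normalizers of the Sylow $2$-subgroups are precisely the point stabilizers, and a nontrivial element of order dividing $2^a-1$ is conjugate to a diagonal element $z\mapsto\lambda^2z$ with $\lambda^2\neq1$, which fixes exactly $0$ and $\infty$; since $U$ is cyclic this gives the count in two lines. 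Your argument is longer but has the virtue of relying only on the subgroup list of Remark~\ref{Subgroups} plus Hall's theorem and elementary Frobenius-group facts; the only items you quote without full justification, namely $|\norm G{T_0}|=2(2^a-1)$ and the conjugacy of all cyclic subgroups of order $2^a-1$, are standard and recoverable from that same list, so the gap is cosmetic rather than substantive.
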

\begin{proof} See {\cite[Lemma ~2.2]{DPSS2}}.\end{proof}

Next, some properties of the degree graph of simple and almost-simple groups.

\begin{theorem}[\mbox{\cite[Theorem~5.2]{W}}]
  \label{PSL2bis}
Let $S \cong \PSL{t^a}$ or $S \cong \SL{t^a}$, with $t$ prime and $a \geq 1$. 
Let $\rho_{+} = \pi(t^a+1)$ and $\rho_{-} = \pi(t^a-1)$. For a subset $\rho$ of vertices 
of $\Delta(S)$, we denote by $\Delta_{\rho}$ the subgraph of $\Delta = \Delta(S)$ induced
by the subset $\rho$.
Then 
\begin{enumeratei}
\item if $t=2$ and $a \geq 2$, then $\Delta(S)$ has three connected components, $\{t\}$, $\Delta_{\rho_{+}}$ and 
$\Delta_{\rho_{-}}$, and each of them is a complete graph.  
\item if $t > 2$ and $t^a > 5$, then  $\Delta(S)$ has two connected components, 
$\{t\}$ and  $\Delta_{\rho_{+} \cup \rho_{-}}$; moreover, both  $\Delta_{\rho_{+}}$ and $\Delta_{\rho_{-}}$ are
complete graphs, no vertex  in $\rho_{+} - \{ 2 \}$ is adjacent to any vertex  in  
$\rho_{-} - \{ 2\}$ and $2$ is a complete vertex of $\Delta_{\rho_{+} \cup \rho_{-}}$. 
\end{enumeratei}
\end{theorem}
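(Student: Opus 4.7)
The plan is to read off $\cd S$ from the well-known character tables of $\SL q$ and $\PSL q$ (see e.g.\ \cite[Ch.~II, \S 8]{Hu}) and then to verify both parts of the theorem by direct inspection of which primes divide which degrees. Concretely, for $t=2$ and $a\geq 2$ one has $\cd S = \{1,\,2^a,\,2^a-1,\,2^a+1\}$; for $t$ odd and $t^a>5$ the degrees of $\SL{t^a}$ are $\{1,\,t^a,\,t^a-1,\,t^a+1,\,(t^a-1)/2,\,(t^a+1)/2\}$, and $\cd{\PSL{t^a}}$ is a subset of this list which in every case still contains both $t^a-1$ and $t^a+1$.

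For (i), I would first observe that $\gcd(2^a-1,2^a+1)=1$ and both are odd, so the three vertex sets $\{2\}$, $\rho_-$ and $\rho_+$ are pairwise disjoint. Each is a clique, as witnessed by the single degrees $2^a$, $2^a-1$ and $2^a+1$ respectively, and no cross-edge can occur because no member of $\cd S$ has prime divisors in two distinct sets. Hence $\Delta(S)$ decomposes into the three complete connected components claimed.

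For (ii), the key arithmetic input is that $t$ is odd, so $t^a\pm 1$ are both even and $\gcd(t^a-1,t^a+1)=2$. Consequently $\rho_-\cap\rho_+=\{2\}$ and $\{t\}$ is disjoint from $\rho_-\cup\rho_+$; moreover, since $t^a$ is the only member of $\cd S$ divisible by $t$ and it is a prime power, $t$ has no neighbours and forms an isolated component. The degrees $t^a\pm 1$ show at once that $\Delta_{\rho_-}$ and $\Delta_{\rho_+}$ are cliques and that $2$ is adjacent in $\Delta_{\rho_+\cup\rho_-}$ to every other vertex. To rule out edges between $r\in\rho_-\setminus\{2\}$ and $s\in\rho_+\setminus\{2\}$ I would run through the (at most six) nontrivial degrees and note that $r$ divides only $t^a-1$ and $(t^a-1)/2$ while $s$ divides only $t^a+1$ and $(t^a+1)/2$, so $rs$ divides no degree.

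The only real nuisance is the bookkeeping of $\cd S$ when passing from $\SL{t^a}$ to $\PSL{t^a}$ or when varying the residue of $t^a$ modulo $4$; since $\cd{\PSL{t^a}}\sbs\cd{\SL{t^a}}$ and no forbidden cross-edge appears in the larger list, passing to the relevant subset cannot create new adjacencies, so the entire verification reduces to the coprimality observations above.
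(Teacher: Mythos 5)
Your argument is correct. Note, however, that the paper does not prove this statement at all: it is quoted verbatim from \cite[Theorem~5.2]{W}, so there is no internal proof to compare against. What you have written is essentially the standard verification underlying the cited result: read off $\cd S$ from the known character tables of $\SL{t^a}$ and $\PSL{t^a}$ and check divisibilities. Your degree lists are right ($\cd{\SL{t^a}}=\{1,t^a,t^a\pm 1,(t^a\pm1)/2\}$ for $t$ odd, with $\cd{\PSL{t^a}}$ omitting $(t^a+1)/2$ or $(t^a-1)/2$ according to $t^a\bmod 4$), and your coprimality observations ($\gcd(2^a-1,2^a+1)=1$ in case (i), $\gcd(t^a-1,t^a+1)=2$ in case (ii)) are exactly what is needed. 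Two small points deserve explicit mention: first, the hypothesis $t^a>5$ is genuinely used to guarantee that both $t^a-1$ and $t^a+1$ lie in $\cd{\PSL{t^a}}$ (for $t^a=5$ one has $6\notin\cd{\PSL 5}$, and indeed the conclusion of (ii) fails there); second, to get the stated count of connected components one should also record that the vertex set of $\Delta(S)$ is precisely $\{t\}\cup\rho_+\cup\rho_-$ and that $\Delta_{\rho_+\cup\rho_-}$ is connected because $2$ is a complete vertex of it. Both are immediate from your setup, so the proposal stands.
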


\begin{theorem}\label{MoretoTiep}
  Let $G$ be an almost-simple group with socle $S$, and let $\delta = \pi(G) - \pi(S)$.
  If $\delta \neq \emptyset$, then $S$ is a simple group of Lie type, and every vertex in \(\delta\) is adjacent to every other vertex of $\Delta(G)$ that is not the characteristic of $S$. Moreover, if \(S\cong\SL{2^a}\) and $a \geq 3$, then any prime in \(\pi(G/S)\) is adjacent to every other vertex of \(\Delta(G)\), except possibly to $2$. 
\end{theorem}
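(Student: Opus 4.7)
The plan is to split the statement into three parts and treat them sequentially: first, that $S$ must be of Lie type; second, that for such an $S$ every $p\in\delta$ is joined in $\Delta(G)$ to every vertex other than the defining characteristic; third, the strengthened conclusion for $S\cong\SL{2^a}$ with $a\geq 3$. For the first reduction I would appeal to the classification of finite simple groups: if $S$ is sporadic then $|\out S|\leq 2$ and $2\in\pi(S)$, while if $S\cong\alt n$ with $n\geq 5$ then $|\out S|$ divides $4$ and $\{2,3\}\sbs\pi(S)$; in both cases $\pi(\out S)\sbs\pi(S)$, forcing $\delta=\emptyset$ and contradicting the hypothesis.

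For the general Lie-type case in defining characteristic $t$, I would use the standard decomposition $\out S=D\cdot\Phi\cdot\Gamma$ into diagonal, field and graph parts, noting that $\pi(D)\cup\pi(\Gamma)\sbs\pi(S)$, so any $p\in\delta$ must divide the order of $\Phi$ and be coprime to $|S|$. Fixing such $p$, I would choose a subgroup $\tilde H$ with $S\leq\tilde H\leq G$ and $\tilde H/S$ cyclic of order $p$, which is available since the extension splits by Schur--Zassenhaus; write $\tilde H=S\gen\sigma$ with $\sigma$ of order $p$ inducing a field automorphism. Given any $q\in\V G\setminus\{p,t\}$, the plan is to produce $\chi\in\irr S$ with $q\mid\chi(1)$ that is not $\sigma$-invariant: Clifford theory then yields $\chi^{\tilde H}\in\irr{\tilde H}$ of degree $p\chi(1)$, and an irreducible constituent of its induction to $G$ has degree divisible by $pq$, i.e.\ the desired edge in $\Delta(G)$. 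The main obstacle is precisely this existence claim for $\chi$; I would extract it from Deligne--Lusztig theory, using the fact that the $\sigma$-stable characters of $S$ correspond bijectively, via the norm construction, to characters of a smaller-field subform $S_0$ of $S$ whose order is a proper divisor of $|S|$, and a direct comparison of character counts of a given degree in $S$ versus $S_0$ yields full-length $\sigma$-orbits among characters of degree divisible by $q$; a few small-rank exceptional cases would be verified by direct inspection.

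Finally, for the sharpening, the statement now allows any $p\in\pi(G/S)\sbs\pi(a)$, which may include primes dividing $|S|$, but the argument of the previous paragraph applies uniformly to any $p$ for which $\tilde H/S$ of order $p$ is available. Theorem~\ref{PSL2bis}(i) gives $\cd S=\{1,\,2^a-1,\,2^a,\,2^a+1\}$, so every odd prime of $\pi(S)$ divides exactly one of $2^a\pm 1$. The field automorphism $\sigma$ of order $p$ acts on each maximal torus of $S$ as a power of the Frobenius $x\mapsto x^{2^{a/p}}$, and its invariant characters of degrees $2^a\pm 1$ biject with characters of the $\mathbb{F}_{2^{a/p}}$-rational subtori, a strictly smaller set once $a\geq 3$; hence $\sigma$-moved characters of each of the degrees $2^a-1$ and $2^a+1$ exist, and inducing them realises edges from $p$ to every odd prime of $\V G$. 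The Steinberg character is the only irreducible of $S$ of even degree and is $\sigma$-invariant, so this construction does not produce an edge from $p$ to $2$, which accounts for the caveat \emph{except possibly to $2$} in the statement.
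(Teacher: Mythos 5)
The paper does not reprove this result at all: the first claim is quoted verbatim from \cite[Theorem~3.9]{DPSS} (which in turn rests on the Moret\'o--Tiep theorem on prime divisors of character degrees of almost simple groups), and the sharpening for $S\cong\SL{2^a}$ is read off from Theorem~A of \cite{W1}, which explicitly computes $\cd G$ for almost simple $G$ with socle $\PSL{}{}\,(q)$-type and shows that $(2^a-1)|G/S|$ and $(2^a+1)|G/S|$ both occur as degrees. You are therefore attempting something much more ambitious, namely a proof from first principles, and while your reduction to field automorphisms (via $\pi(D)\cup\pi(\Gamma)\sbs\pi(S)$) and your explanation of the caveat about the vertex $2$ (the Steinberg character being the unique irreducible of even degree of $\SL{2^a}$ and being $\sigma$-invariant) are both sound, two steps do not hold up as written.

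First, the heart of the matter --- that for every $q\in\V G\setminus\{p,t\}$ there is a suitable non-$\sigma$-invariant $\chi\in\irr S$ with $q\mid\chi(1)$ --- is asserted, not proved: ``a direct comparison of character counts'' via Deligne--Lusztig theory is precisely the nontrivial content of the theorem you are trying to establish, and it genuinely requires work (semisimple and regular characters, Shintani descent, and a list of low-rank exceptions). Second, the Clifford-theoretic descent is flawed when $G/S$ is not cyclic: from $\chi^{\tilde H}\in\irr{\tilde H}$ of degree $p\chi(1)$ you cannot conclude that an irreducible constituent $\psi$ of $(\chi^{\tilde H})^G$ has degree divisible by $p\chi(1)$, since $\psi(1)=\chi(1)\,|G:I_G(\chi)|\,e$ and $\sigma\notin I_G(\chi)$ does not force $p\mid |G:I_G(\chi)|\,e$ (a conjugate of $\sigma$ may stabilize $\chi$; what you actually need is a $G$-orbit of degree-divisible-by-$q$ characters none of whose members is fixed by any element of order $p$ over $S$). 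This objection disappears in the $\SL{2^a}$ case because there $\out S$ is cyclic, so that part of your argument is repairable and essentially reconstructs White's computation; for the general statement you should instead simply invoke \cite[Theorem~3.9]{DPSS} as the paper does, or the Moret\'o--Tiep theorem directly.
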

\begin{proof}
The first claim is Theorem~3.9 of \cite{DPSS}. As for the second claim, by Theorem~A of \cite{W1} we see that both $(2^a-1)|G/S|$ and  $(2^a+1)|G/S|$ are irreducible character degrees of \(G\).
\end{proof}


\begin{lemma}\label{InfiniteCommutator}
Let \(G\) be a group and let \(R\) be its solvable radical. Assume that \(G/R\) is an almost-simple group with socle isomorphic to $\PSL{t^a}$, for a prime \(t\) with \(t^a> 4\) and \(t^a\neq 9\). Then, denoting by \(K\) the solvable residual of $G$, one of the following conclusions holds.

\begin{enumeratei} 
\item \(K\) is isomorphic to \(\PSL{t^a}\) or to \(\SL{t^a}\); 
\item \(K\) has a non-trivial normal subgroup \(L\) such that \(K/L\) is isomorphic to \(\PSL{t^a}\) or to \(\SL{t^a}\), and every non-principal irreducible character of \(L/L'\) is not invariant in \(K\).
\end{enumeratei}
\end{lemma}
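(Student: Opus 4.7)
The plan is to produce the subgroup $L$ as the stable term of a descending sequence of $K$-normal subgroups of $L_0:=K\cap R$, exploiting the smallness of the Schur multiplier of $\PSL{t^a}$ to control the perfect central extensions that arise at each step.

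First, I would verify that $K/L_0\cong\PSL{t^a}$. Since $G/R$ is almost simple with socle $\overline{S}\cong\PSL{t^a}$ and $\out{\overline{S}}$ is solvable, the solvable residual of $G/R$ equals $\overline{S}$; on the other hand, $K$ is perfect (being the last term of the derived series of $G$), so $KR/R$ is a perfect normal subgroup of $G/R$ with solvable factor group, and must coincide with $\overline{S}$. Hence $K/L_0\cong KR/R\cong\PSL{t^a}$.

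Next, I would define recursively $L^{(0)}:=L_0$ and $L^{(i+1)}:=(L^{(i)})'\cdot[K,L^{(i)}]$. Each $L^{(i)}$ is normal in $K$; the quotient $L^{(i)}/L^{(i+1)}$ is abelian and centralised by $K$; and $K/L^{(i+1)}$ is perfect. Therefore $K/L^{(i+1)}$ is a perfect central extension of $K/L^{(i)}$. Under the hypotheses $t^a>4$ and $t^a\neq 9$, the Schur multiplier of $\PSL{t^a}$ has order at most $2$ with Schur cover $\SL{t^a}$, and $\SL{t^a}$ itself has trivial Schur multiplier; a short induction then yields $K/L^{(i)}\in\{\PSL{t^a},\SL{t^a}\}$ for every $i$, and the chain strictly decreases at most once (only when passing from the $\PSL$-quotient to the $\SL$-quotient in odd characteristic). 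Hence the sequence stabilises at some $L^*$ with $K/L^*\in\{\PSL{t^a},\SL{t^a}\}$ and, by the definition of stability, $L^*=(L^*)'\cdot[K,L^*]$.

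Finally I would read off the dichotomy. If $L^*=1$, then $K\cong K/L^*$ is isomorphic to $\PSL{t^a}$ or $\SL{t^a}$, and we are in case (a). Otherwise, set $L:=L^*$. A character $\lambda\in\irr{L/L'}$ is $K$-invariant precisely when it is trivial on $[K,L]L'/L'$, but the stability relation $L=L'\cdot[K,L]$ forces this subgroup to coincide with $L/L'$, so only the principal character of $L/L'$ is $K$-invariant; this is case (b). The main obstacle I foresee is the Schur multiplier bookkeeping at each inductive step: one must know that every perfect central extension of $\PSL{t^a}$ (resp.\ $\SL{t^a}$) is a quotient of $\SL{t^a}$ (resp.\ of itself), which is precisely what fails for the excluded values $t^a\in\{4,9\}$ where the multipliers become exceptional.
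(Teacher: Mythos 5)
Your proposal is correct. Note that the paper gives no in-text proof of this lemma at all --- it simply defers to \cite[Lemma~2.5]{DPSS2} --- so a line-by-line comparison is not possible here; but your argument is a sound, self-contained proof and is the natural one. The individual steps all check out: $KR/R$ is a perfect normal subgroup of $G/R$ with solvable quotient, hence equals the socle, giving $K/(K\cap R)\cong\PSL{t^a}$; each $K/L^{(i+1)}$ is a perfect central extension of $K/L^{(i)}$ and therefore a quotient of its universal central extension, which for $t^a>4$, $t^a\neq 9$ is $\SL{t^a}$ (itself superperfect), so the quotients stay in $\{\PSL{t^a},\SL{t^a}\}$ and the chain stabilises (your remark that it drops at most once is a correct bonus, though plain finiteness already suffices); and the stabilised term $L^*$ satisfies $L^*=(L^*)'[K,L^*]$, which by the duality you invoke --- a character of $L^*/(L^*)'$ is $K$-invariant exactly when it kills $[K,L^*](L^*)'/(L^*)'$ --- is precisely the statement that no non-principal irreducible character of $L^*/(L^*)'$ is $K$-invariant. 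Your closing observation about why $t^a\in\{4,9\}$ must be excluded (exceptional multipliers) correctly identifies the only place where the hypothesis is used.
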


\begin{proof} See {\cite[Lemma ~2.5]{DPSS2}}.\end{proof}




\begin{lemma}\label{extn}
  Let \(G\) be a group, let \(R\) be its solvable radical and $K$ its solvable residual.
  Assume that   \(L\) is a normal subgroup of \(G\), contained in \(K\), such that \(K/L\cong\SL{2^a}\) with $a \geq 2$, and
  \(L\) is isomorphic to the natural module for \(K/L\). Let $T$ be a Sylow $2$-subgroup of $KR$, let
  $T_0 = T \cap R$ and $T_1 = T \cap K$. Then $L \leq \zent{T_0}$.
  Furthermore, every non-principal $T$-invariant character
  $\lambda \in \irr L$ extends to $T_1$ and, assuming that  $T_0/L$ is abelian,  $\lambda$ extends  to $T$ if and only if $T'= T_1'$.  
\end{lemma}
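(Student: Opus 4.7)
The plan is to exploit the self-duality of $L$ as an $\F_{2^a}[K/L]$-module. Fix the invariant non-degenerate symplectic form $\omega$ on $L$ (composed with the $\F_{2^a}\to\F_2$ trace when an $\F_2$-valued pairing is needed); this identifies $\irr L$ with $L$ itself as $\F_2[K]$-modules and turns each kernel/extension question into a linear-algebra statement about a $K$-invariant bilinear form.

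First I would prove $L\leq\zent{T_0}$. Since $L$ is a normal solvable subgroup of $G$, one has $L\leq R$, and since $L$ is a $2$-group, $L\leq\oh 2 R\leq T_0$. Because $K/L$ is simple non-abelian, the normal solvable subgroup $K\cap R$ of $K$ must equal $L$; hence $[R,K]\leq R\cap K=L$ and so $R/L$ acts on $L$ by endomorphisms commuting with the $K/L$-action. By Schur's lemma the image of that map lies in $\mathrm{End}_{K/L}(L)^{\times}=\F_{2^a}^{\times}$, of odd order $2^a-1$, so the $2$-group $T_0/L$ acts trivially on $L$. Thus $T_0\leq\cent G L$, and $L\leq\zent{T_0}$.

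Next I would extend a non-principal $T$-invariant $\lambda\in\irr L$ to $T_1$. Put $U=T_1/L$, an elementary abelian Sylow $2$-subgroup of $K/L$ of order $2^a$. By the previous step, $T$-invariance of $\lambda$ translates, via $\omega$, to a nonzero $U$-fixed vector $v_\lambda\in L^U$, a $1$-dimensional $\F_{2^a}$-subspace of the $2$-dimensional symplectic space $L$; any such line is totally isotropic, so $L^U\leq\ker\lambda$. Since $T_1/L$ is abelian, $T_1'\leq L$ and $\lambda$ extends to $T_1$ iff $T_1'\leq\ker\lambda$. A direct computation in the natural module gives $[L,U]=L^U$, so $L^U\leq T_1'$; the crux is the reverse inclusion $T_1'\leq L^U$. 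I would establish this via a coprime-action argument: lift a maximal torus of $K/L$ of order $2^a-1$ to a subgroup $\hat H\leq K$ using Schur--Zassenhaus, and note that $\hat H$ normalizes $T_1$ and $L^U$ and acts fixed-point-freely on $L$ and on $T_1/L$. Propagating via the exact sequence $1\to L/L^U\to T_1/L^U\to T_1/L\to 1$, $\hat H$ acts fpf on $T_1/L^U$ as well. Then $(T_1/L^U)'\leq L/L^U$ is $\hat H$-invariant in the $\hat H$-irreducible quotient $L/L^U\cong\F_{2^a}$, and the alternative $(T_1/L^U)'=L/L^U$ is ruled out by checking that the induced $\hat H$-equivariant alternating bilinear map $U\times U\to L/L^U$ must vanish (a small $\F_{2^a}$-arithmetic exercise on Frobenius-twisted monomials). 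Hence $T_1'=L^U\leq\ker\lambda$ and $\lambda$ extends.

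For the last assertion, assume $T_0/L$ is abelian and write $T=T_0T_1$. Then $T'=T_0'\cdot[T_0,T_1]\cdot T_1'\leq L$: $T_0'\leq L$ by hypothesis, $[T_0,T_1]\leq L$ by combining Step~1 with $[T_0,K]\leq[R,K]\leq L$, and $T_1'=L^U\leq L$ by Step~2. Exactly as in Step~2, $\lambda$ extends to $T$ iff $T'\leq\ker\lambda$. If $T'=T_1'$, Step~2 gives the extension of every such $\lambda$. Conversely, if $T'\supsetneq T_1'=L^U$, then $T'$ is a $U$-invariant subspace of $L$ strictly containing $L^U$, so by self-duality $(T')^{\perp}\subsetneq (L^U)^{\perp}=L^U$; choose a non-principal $T$-invariant $\lambda$ corresponding via $\omega$ to a vector $v\in L^U\setminus (T')^{\perp}$ and observe that $\lambda(T')\neq 1$, so this $\lambda$ does not extend. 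Thus extendability of every non-principal $T$-invariant $\lambda$ to $T$ is equivalent to $T'=T_1'$. The main obstacle is clearly the identity $T_1'=L^U$ in Step~2, which encodes the statement that the Sylow $2$-subgroup of $K$ is no more non-abelian than the natural-module structure forces; everything else is bookkeeping on $\omega$ and on the coprime action of $\hat H$.
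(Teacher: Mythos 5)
Your overall architecture is close to the paper's: both arguments hinge on the self-duality of the natural module, on identifying $Z=\cent L{T_1}=[L,T_1]$ (your $L^U$) as the decisive subgroup of order $2^a$, and on the observation that when $T_0/L$ is abelian one has $T'\leq L$, so that extendability of a linear $\lambda$ reduces to whether $T'\leq\ker{\lambda}$. Your Step~1 (Schur's lemma forcing the $2$-group $T_0$ to act trivially on $L$ because $\mathrm{End}_{K/L}(L)^{\times}\cong\F_{2^a}^{\times}$ has odd order) is correct and is a legitimate alternative to the paper's argument that $\zent{T_0}\cap L$ is a non-trivial $K$-invariant subgroup of the irreducible module $L$.

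The gap is in the step you yourself identify as the crux, namely $T_1'=L^U$. You propose to rule out $(T_1/L^U)'\neq 1$ by showing that every $\hat H$-equivariant alternating biadditive map $U\times U\to L/L^U$ vanishes, but that arithmetic claim is false for $a=3$. In standard coordinates, $U\cong\F_{2^a}$ carries the torus weight $t\mapsto t^2$ and $L/L^U\cong\F_{2^a}$ carries the weight $t\mapsto t^{-1}$, so the monomial $x^{2^i}y^{2^j}$ is equivariant if and only if $2^{i+1}+2^{j+1}+1\equiv 0\pmod{2^a-1}$. For $a=3$ this has the solution $2+4+1=7$, and $\beta(x,y)=xy^2+x^2y$ is then a nonzero equivariant biadditive map with $\beta(x,x)=2x^3=0$, i.e.\ alternating. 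Hence weight considerations alone do not force $T_1/L^U$ to be abelian when $a=3$, which lies inside the range of the lemma and is precisely where Theorem~1 applies it; you need an actual group-theoretic input at this point (the paper obtains $\zent{T_1}=T_1'$ of order $2^a$ by direct inspection of the action of $T_1$ on $L$, e.g.\ via the splitting of $T_1$ over $L$, under which every square $(lu)^2=l+{}^{u}l$ lands in $[L,U]=L^U$). A second, more minor point: your converse in Step~3 only exhibits \emph{one} non-extending $T$-invariant $\lambda$ when $T'>T_1'$, so it proves ``every such $\lambda$ extends iff $T'=T_1'$'' rather than the per-character biconditional of the statement, which is what the paper actually uses. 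This is repairable: $T'$ is normalized by the torus $D$ (since $[D,T_0]\leq[K,R]\leq L$), so $T'>Z$ forces $T'=L$ by the irreducibility of $D$ on $L/Z$, and then no non-principal character of $L$ contains $T'$ in its kernel --- which is essentially the paper's route.
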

\begin{proof}
  Observe that $L = K \cap R$ is an elementary abelian $2$-group of order $2^{2a}$, $T_0$ is  normalized by $K$
  and $T = T_0T_1$. As $\zent{T_0} \cap L$ is non-trivial and normal in $K$, by the irreducibility of $L$ as a $K$-module
  it follows that $L \leq \zent{T_0}$. 
 
It is well known that \(\norm K{T_1}/L=\norm{K/L}{T_1/L}\) is a subgroup of type (ii) of \(K/L\) whose order is \(2^a\cdot(2^a-1)\) (in fact, $\norm{K/L}{T_1/L}$ can be identified with the subgroup of lower-triangular matrices of \(\SL{2^a}\)); thus we write $\norm K{T_1} = T_1D$, where $D$ is cyclic of order $2^a-1$.  
  By looking at the action of $T_1$ on the natural module $L$, we see that $Z = \zent{T_1} = (T_1)'$ is a normal subgroup, of order $2^a$, of $T_1D$.
  Since $L$ is a self-dual $K$-module, we have $|\cent{\widehat{L}}{T_1}| = |\cent{L}{T_1}| = 2^a = |\widehat{L/Z}|$
  and hence, as certainly the characters of $L/Z$ are $T_1$-invariant, we conclude that the $T_1$-invariant characters of $L$
  are precisely the elements of $\widehat{L/Z}$. They are clearly $T$-invariant and they extend to $T_1$, because $T_1/Z$ is abelian. 

  Let $\lambda \in \irr L$ be a non-principal $T$-invariant character and assume that $\lambda$ has an extension $\tau \in \irr T$. Since $\tau(1) = \lambda(1) = 1$, we have $T' \leq \ker{\tau}$.
  Assuming that $T_0/L$ is abelian, then  $T/L = T_0/L \times T_1/L$ is abelian and $T' \leq L$.
  So, $T' = T' \cap L \leq \ker{\tau_L} = \ker{\lambda}$ and, as $\lambda \neq 1_L$, hence $T' < L$.
  Observing that $T'$ is normalized by $D$ and that $D$ acts irreducibly on $L/Z$, we conclude that $T' \leq Z$
  and, since $Z = T_0' \leq T'$, that $T' = T_0'$.

  Conversely, if $\lambda \in \irr L$ is $T$-invariant, then as observed above $Z \leq \ker{\lambda}$ and, assuming
  $Z = T'$, clearly $\lambda$, seen as a character of $L/Z$, extends to the abelian group $T/Z$.
\end{proof}

\begin{rem}\label{NaturalExtended}
Let \(K\) be a group having a normal subgroup \(L\) with \(K/L\cong\SL{2^a}\) (for \(a\geq 2\)), and such that \(L\) is isomorphic to the natural \(K/L\)-module. Then, as a consequence of the previous lemma, we can see that the graph \(\Delta(K)\) is disconnected with two connected components, whose vertex sets are \(\{2\}\) and \(\pi(K)-\{2\}\) respectively, and which are both complete subgraphs of \(\Delta(K)\). 

In fact, by Theorem~\ref{PSL2bis}, \(\Delta(K/L)\) has three connected components with vertex sets \(\pi(2^a-1)\), \(\pi(2^a+1)\) and \(\{2\}\) respectively, which are all complete subgraphs of \(\Delta(K/L)\). On the other hand, if \(\lambda\) is any non-principal character in \(\irr L\), then \(I_K(\lambda)\) is a Sylow \(2\)-subgroup of \(K\), and Lemma~\ref{extn} guarantees that \(\lambda\) extends to \(I_K(\lambda)\); our claim then easily follows by Clifford's theory. 
\end{rem}

\begin{theorem}\label{0.2}
Let $G$ be a non-solvable group such that $\Delta(G)$ is connected  and it has a  cut-vertex \(p\). Then, denoting by $R$ the solvable radical of $G$, we have that $G/R$ is an almost-simple group such that \(\V G=\pi({G/R})\cup\{p\}\). 
\end{theorem}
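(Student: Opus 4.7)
The plan is to derive this statement as a direct consequence of the structural analysis carried out in \cite{DPSS2} and \cite{DPSS}. For the first claim (that $G/R$ is almost simple with non-abelian simple socle), under the cut-vertex hypothesis it is established in \cite{DPSS} that $G$ admits a unique non-solvable composition factor $S$. The key observation supporting this uniqueness is that, if the socle of $G/R$ were a direct product of two or more non-abelian simple factors, then the character theory of each factor would independently contribute vertices and edges to $\Delta(G)$ in a way incompatible with the existence of a single cut-vertex. Uniqueness of the non-solvable composition factor then forces the socle of $G/R$ to be a single copy of $S$, so that $G/R$ embeds into $\aut S$ and is in particular almost simple.

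For the equality $\V G = \pi(G/R) \cup \{p\}$, the inclusion $\pi(G/R) \sbs \V G$ is immediate: inflation of characters of $G/R$ yields irreducible characters of $G$, and by Ito--Michler applied to the non-solvable almost simple group $G/R$, every prime dividing $|G/R|$ appears in some such character degree. For the reverse inclusion, I would assume toward a contradiction that there exist two distinct primes $q_1, q_2 \in \V G - \pi(G/R)$, so that $q_1, q_2 \in \pi(R)$ and $q_i \neq p$ for $i=1,2$. Each $q_i$ is then a vertex of $\Delta(G)$ arising from some $\chi_i \in \irr G$ lying above a non-principal character of a chief factor of $G$ contained in $R$. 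By analysing the inertia subgroups via Clifford theory, and using the restricted list of possibilities for $S$ established in \cite{DPSS}, one shows that each $q_i$ must be adjacent in $\Delta(G)$ to vertices lying in distinct connected components of $\Delta(G) - \{p\}$. Since both $q_1 \neq p$ and $q_2 \neq p$, this produces paths between the components of $\Delta(G) - \{p\}$ that avoid $p$, contradicting the fact that $p$ is a cut-vertex.

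The main obstacle in the last step is quantifying exactly how a prime in $\pi(R) - \pi(G/R)$ is forced to interact with the vertices in $\pi(G/R)$: this requires a careful use of the fine structure of the action of $G/R$ on $R$, together with the explicit list of admissible simple groups $S$. These are precisely the detailed arguments carried out in the preceding papers of the series, and thus for the present purposes I would simply invoke the corresponding statements in \cite{DPSS2} and \cite{DPSS}, so that no further proof is required here.
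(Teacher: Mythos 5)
Your bottom line coincides with the paper's: Theorem~\ref{0.2} is not proved in this paper at all, but simply quoted from \cite[Theorem~3.15]{DPSS}, and your final paragraph defers to exactly that source, so as far as this paper is concerned the proposal is acceptable and takes the same route (a citation).

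That said, the sketch you offer for the reverse inclusion contains a logical slip you should not internalize. The negation of $\V{G}\subseteq\pi(G/R)\cup\{p\}$ is the existence of a \emph{single} prime $q\in\V{G}-\pi(G/R)$ with $q\neq p$, not of two distinct primes both different from $p$; even if your two-prime argument succeeded, it would only bound $|\V{G}-\pi(G/R)|$ by one, leaving open that the one extra vertex is some prime other than $p$. The mechanism that actually closes this, one prime at a time, is the following: if $q\in\V{G}-\pi(G/R)$, then $q\nmid|G:R|$ together with \cite[Corollary~11.29]{Is} forces $q\in\V{R}$ (not merely $q\in\pi(R)$), and such a prime is then shown to be a \emph{complete} vertex of $\Delta(G)$ (compare the analogous step in the proof of Lemma~\ref{star}(a) here, which rests on \cite{DKP}). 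A complete vertex different from $p$ leaves $\Delta(G)-p$ connected, which is incompatible with $p$ being a cut-vertex; hence $q=p$. Your first paragraph is also only a heuristic -- what is really needed is that the socle of $G/R$ is a single non-abelian simple group, not merely that $G$ has a unique non-solvable composition factor up to isomorphism -- but again this is precisely what \cite[Theorem~3.15]{DPSS} supplies.
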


\begin{proof} See \cite[Theorem~3.15]{DPSS}.\end{proof}


To conclude this preliminary section, we recall the statements of three crucial results proved in  \cite{DPSS2} and \cite{DPSS}, concerning certain module actions of $\SL{t^a})$. 

Let \(H\) and \(V\) be finite groups, and assume that \(H\) acts by automorphisms on \(V\). Given a prime number  \(q\), we say that {\it{the pair \((H,V)\) satisfies the condition \(\nq\)}} if  $q$ divides $|H: \cent HV|$ and, for every non-trivial \(v\in V\), there exists a Sylow \(q\)-subgroup \(Q\) of \(H\) such that \(Q\trianglelefteq \cent H v\) (see \cite{C}).

If $(H, V)$ satisfies $\nq$ then \(V\) turns out to be an elementary abelian \(r\)-group for a suitable prime \(r\), and \(V\) is in fact an {\it{irreducible}} module for \(H\) over the field $\mathbb{F}_r$ (see~Lemma~4 of~\cite{Z}). 

\begin{lemma}[\mbox{\cite[Lemma~3.10]{DPSS}}] \label{SL2Nq}
  Let $t$, $q$, $r$  be prime numbers, let $H = {\rm SL}_2(t^a)$ (with $t^a \geq 4$)  and  let  $V$ be an $\mathbb{F}_r[H]$-module.
  Then $(H, V)$ satisfies $\nq $ if and only if 
 either $t^a = 5$ and $V$ is the natural module for $H/\cent HV \cong {\rm SL}_2(4)$ or $V$ is faithful and 
  one of the following holds.
  \begin{description}
  \item[(1)] $t = q = r$ and $V$ is the natural $\mathbb{F}_r[H]$-module (so $|V| = t^{2a}$); 
  \item[(2)] $q = r = 3$ and $(t^a, |V|) \in \{(5, 3^4), (13, 3^6)\}$. 
  \end{description}
\end{lemma}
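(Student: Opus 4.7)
The plan is to prove both directions, with the forward (``only if'') direction being substantially the harder one. For the reverse direction, I would verify the $\nq$ condition by direct inspection of each listed case. In the natural-module case~(1), the stabilizer of a non-zero vector is a root subgroup of $H$, i.e.\ a Sylow $t$-subgroup, so the condition for $q=t$ is immediate. In the non-faithful $t^a=5$ case, quotienting out $\zent H$ reduces to case~(1) for $\SL 4$. For~(2), I would directly inspect the two specific modules---the $4$-dimensional $\mathbb{F}_3[\SL 5]$-module coming from a $2$-dimensional $\mathbb{F}_9$-representation restricted down, and the $6$-dimensional $\mathbb{F}_3[\SL{13}]$-module---to confirm that every non-zero vector has stabilizer with a normal Sylow $3$-subgroup.

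For the forward direction, the remark preceding the lemma already gives that $V$ is irreducible. I would then examine the kernel $N=\cent H V$: the $\nq$ condition forces $q\mid |H:N|$, so $N$ is a proper normal subgroup of $H$. Since $\SL{t^a}$ with $t^a\geq 4$ is either simple ($t=2$) or perfect with center of order $2$ ($t$ odd), $N$ is trivial or equal to $\zent H$. In the non-faithful case $H/N\cong\PSL{t^a}$ acts faithfully with $\nq$ inherited; combining the faithful-case classification (below) with the exceptional isomorphism $\PSL 5\cong\SL 4$ identifies the unique instance as $t^a=5$ with $V$ the natural $\SL 4$-module.

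Now assume $V$ is faithful and, for $0\neq v\in V$, set $C_v=\cent H v$ with $Q_v$ its normal Sylow $q$-subgroup provided by $\nq$. I would split on whether $q=t$ or $q\neq t$. If $q=t$, Sylow $t$-subgroups of $H$ are elementary abelian root subgroups, self-normalized in a Borel subgroup (type~(ii) of Remark~\ref{Subgroups}, or its odd-characteristic analogue); hence every $C_v$ lies in a Borel. A counting/Clifford-theoretic argument comparing the orbit sizes $[H:C_v]$ with $|V|-1$, together with the classical description of irreducibles of $\SL{t^a}$ in defining characteristic, then forces $V$ to be the natural module, yielding case~(1). If $q\neq t$, Dickson's classification of subgroups of $\SL{t^a}$ shows that any subgroup with a non-trivial normal Sylow $q$-subgroup lies inside a dihedral subgroup of type~(i) (or inside one of the exceptional subgroups $A_4,S_4,A_5$ when $q\leq 5$). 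Combining this strong restriction on vector stabilizers with Brauer's list of low-dimensional cross-characteristic irreducible representations of $\SL{t^a}$, $V$ is narrowed down to the two exceptional modules of~(2).

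The main obstacle will be this last step: pinning down precisely the two exceptional modules in the $q\neq t$ regime. Large values of $t^a$ should be ruled out by orbit-length incompatibility (the $H$-orbits on $V\setminus\{0\}$ must have length dividing the index of a dihedral subgroup of $H$, while cross-characteristic irreducible modules of $\SL{t^a}$ grow in dimension with $t^a$), and for small $t^a$ one is reduced to an explicit finite check using known modular character tables. Translating the ``stabilizer-contained-in-dihedral'' constraint into sharp numerical bounds, and matching the outcome against the list of irreducible representations of small-rank $\SL_2$'s in cross-characteristic, is where the technical heart of the argument sits.
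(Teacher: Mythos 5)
This paper does not actually prove the lemma: it is imported verbatim from Part~I of the series (\cite[Lemma~3.10]{DPSS}), and the only ``proof'' here is the citation in the bracketed attribution. So there is no in-paper argument to compare yours against line by line; I can only assess your plan on its own terms. Its overall architecture --- reverse direction by inspection of the four modules, forward direction via irreducibility, kernel analysis, and a case split $q=t$ versus $q\neq t$ using Dickson's subgroup list together with orbit/fixed-point counting against lower bounds for cross-characteristic representations --- is the standard and surely the intended route. But as written it has gaps that are more than bookkeeping.

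The clearest one is the non-faithful reduction. If $\cent HV=\zent H\neq 1$ then $t$ is odd and the faithful group acting is $\PSL{t^a}$, which for $t^a>5$ is \emph{not} isomorphic to any $\SL{s^b}$; hence your ``faithful-case classification (below)'', which is a statement about faithful modules of $\SL{s^b}$, says nothing about it, and the exceptional isomorphism $\PSL 5\cong\SL 4$ only disposes of $t^a=5$. To conclude that $t^a=5$ is the \emph{unique} non-faithful instance you must separately show that no faithful $\PSL{t^a}$-module satisfies $\nq$ for odd $t^a>5$; the methods are parallel (Dickson's theorem is naturally stated for $\PSL$), but the step as you wrote it does not follow. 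Second, in the $q=t$ branch the target conclusion is $t=q=r$, yet your sketch never explains how $r\neq t$ is excluded: when $r\neq t$ the coprime decomposition $V=\cent VT\oplus[V,T]$ and the covering of $V-\{0\}$ by the $t^a+1$ fixed-point subspaces $\cent V{T^x}$ must be played off against Landazuri--Seitz-type dimension bounds, and this is a genuinely different computation from the defining-characteristic one you describe. Third, you correctly identify the ``technical heart'' (ruling out large $t^a$ when $q\neq t$ and reducing to a finite check), but the proposal contains only the intention to do it, not an argument; note that orbit lengths and $|V|$ both grow with $t^a$, so ``orbit-length incompatibility'' needs the explicit inequality $|V|-1\leq n_q\cdot(|\cent VQ|-1)$ combined with a lower bound on $\dim V$ to close. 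In short: right skeleton, but the non-faithful case is handled by an invalid reduction, and the two quantitative pillars of the forward direction are asserted rather than proved.
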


\begin{theorem}[\mbox{\cite[Theorem~3.3]{DPSS2}}] \label{TipoIeIIPieni} Let \(V\) be a non-trivial irreducible module for \(G=\SL{t^a}\) over the field \(\F_q\), where \(t^a\geq 4\) and $q$ is a prime number, \(q\neq t\). For odd primes \(r\in\pi(t^a-1)\) and \(s\in\pi(t^a+1)\) (possibly \(r=q\) or \(s=q\)) let \(R\), \(S\) be respectively a Sylow \(r\)-subgroup and a Sylow \(s\)-subgroup of \(G\), and let \(T\) be a Sylow \(t\)-subgroup of \(G\). Then, considering the sets 
\[V_{I_-}=\{v\in V\,|\, {\textnormal{ there exists }} z\in G {\textnormal{ such that }}R ^z\trianglelefteq\cent G v\},\] \[V_{I_+}=\{v\in V\,|\, {\textnormal{ there exists }} z\in G {\textnormal{ such that }}S ^z\trianglelefteq\cent G v\},\] \[V_{II}=\{v\in V\,|\, {\textnormal{ there exists }} z \in G {\textnormal{ such that }}T^z\trianglelefteq\cent G v\},\] we have that \(V-\{0\}\) strictly contains \(V_{I_-}\cup V_{II}\), \(V_{I_+}\cup V_{II}\), and \(V_{I_-}\cup V_{I_+}\), unless one of the following holds.
\begin{enumeratei}
\item \(G\cong\SL 5\), \(s=3\), \(|V|=3^4\) and \(V\setminus\{0\}=V_{I_+}\),
\item \(G\cong\SL {13}\), \(r=3\), \(|V|=3^6\) and \(V\setminus\{0\}=V_{I_-}\).
\end{enumeratei}
\end{theorem}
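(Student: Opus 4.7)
The plan is to reduce the statement to a counting estimate on $V$ and then close it with fixed-point bounds from the cross-characteristic representation theory of $G=\SL{t^a}$. First, I would use Dickson's classification (Remark~\ref{Subgroups}) to identify the proper subgroups of $G$ that can carry a normal Sylow $\ell$-subgroup for $\ell\in\{r,s,t\}$. For an odd prime $r\mid t^a-1$, a Sylow $r$-subgroup $R$ of $G$ is cyclic and $\norm G R$ is dihedral of order $2(t^a-1)$; since Sylow $r$-subgroups of the proper subgroups of types (ii), (iii), (iv) fail to be normal there, any $H\le G$ admitting a normal Sylow $r$-subgroup must lie inside a conjugate of $\norm G R$. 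Symmetrically, for $s\mid t^a+1$, $H$ lies in a conjugate of $\norm G S$, dihedral of order $2(t^a+1)$; and an $H$ with normal Sylow $t$-subgroup must sit in a Borel (type (ii)), so $H$ lies inside a conjugate of $\norm G T=T\rtimes C_{t^a-1}$. Consequently
\[V_{I_\pm}\subseteq\bigcup_{g\in G}\cent V{R_\pm^g}\quad\text{and}\quad V_{II}\subseteq\bigcup_{g\in G}\cent V{T^g},\]
where $R_-=R$ and $R_+=S$.

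Next, I would bound each union by the number of $G$-conjugates times the cardinality of the fixed subspace; for instance
\[|V_{I_-}\cup V_{II}|\le[G:\norm G R]\,(|\cent V R|-1)+[G:\norm G T]\,(|\cent V T|-1),\]
and analogously for $V_{I_+}\cup V_{II}$ and $V_{I_-}\cup V_{I_+}$; the three relevant indices equal $|G|/(2(t^a-1))$, $|G|/(2(t^a+1))$ and $|G|/(t^a(t^a-1))$ respectively. Everything then reduces to bounding $|\cent V R|$, $|\cent V S|$ and $|\cent V T|$ on the irreducible module $V$.

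Since $q\ne t$, the $\F_q[G]$-module $V$ is controlled by the cross-characteristic Brauer character table of $\SL{t^a}$, obtained from the ordinary character table of $\SL{t^a}$ by reduction mod $q$. For each cyclic subgroup $H\in\{R,S,T\}$ the quantity $\dim_{\F_q}\cent V H$ is computable from the Brauer character of $V$ via the standard fixed-point formula, and the explicit tables show that non-trivial irreducible $\F_q[G]$-modules have $\dim V\ge (t^a-1)/2$ (or the analogous lower bound when $t=2$), and that each $|\cent V H|$ is of order at most $|V|^{1/2}$ up to constants depending only on $t^a$. Substituting into the counting bounds above yields strict inequality $<|V|-1$ whenever $\dim V$ is sufficiently large relative to $t^a$.

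The main obstacle is the small-dimensional regime, where the uniform estimate above is not sharp and a direct case-by-case inspection of the short list of small irreducible $\F_q[G]$-modules becomes necessary. It is precisely there that the two exceptional pairs appear: the $4$-dimensional $\F_3[\SL 5]$-module, in which every nonzero vector has a stabilizer containing a normal Sylow $s$-subgroup ($s=3$), and the $6$-dimensional $\F_3[\SL{13}]$-module, in which every nonzero vector has a stabilizer containing a normal Sylow $r$-subgroup ($r=3$); both are consistent with the $\nq$-analysis carried out in Lemma~\ref{SL2Nq}. To complete the proof I would enumerate the finitely many pairs $(G,V)$ violating the uniform bound and verify by explicit orbit inspection that, outside these two exceptions, $V\setminus\{0\}$ always contains a vector whose centralizer has no normal $r$-, $s$- or $t$-Sylow, as required for each of the three unions to be a proper subset.
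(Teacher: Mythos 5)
This paper does not actually prove Theorem~\ref{TipoIeIIPieni}: it is imported verbatim from the companion paper \cite[Theorem~3.3]{DPSS2}, so there is no in-paper proof to measure your argument against. Judged on its own terms, your architecture is reasonable: since $R^z\trianglelefteq\cent Gv$ forces $\cent Gv\leq \norm G{R^z}$ and in particular $v\in\cent V{R^z}$, each of the three sets is covered by a union of fixed spaces of the conjugates of $R$, $S$ or $T$, and a counting estimate against $|V|-1$, fed by a Landazuri--Seitz lower bound on $\dim V$ and upper bounds on fixed-point spaces, is exactly the kind of argument that should succeed outside a finite list of small cases.

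There is, however, a concrete gap at the quantitative heart of your plan. You propose to bound $\dim_{\F_q}\cent VH$ for $H\in\{R,S\}$ via the ``standard fixed-point formula'' applied to the Brauer character of $V$. That formula, $\dim V^H=|H|^{-1}\sum_{h\in H}\varphi(h)$, is only valid when $q\nmid |H|$: the Brauer character is undefined on $q$-singular elements and the averaging idempotent does not exist in characteristic $q$. The statement explicitly allows $r=q$ or $s=q$, and both exceptional conclusions occur precisely in that regime ($s=q=3$ for $\SL 5$ with $|V|=3^4$; $r=q=3$ for $\SL{13}$ with $|V|=3^6$); there the elements of $R$ (or $S$) act unipotently on $V$ and their fixed spaces must be controlled by Jordan-block data rather than character values. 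Worse, in that regime your union bound is exactly tight rather than strict: for $\SL5$ one has $[G:\norm GS]=10$ and $|\cent VS|=3^2$, so $10\cdot 8=80=|V|-1$, and the counting argument cannot by itself decide whether the covering occurs --- which is the whole point of distinguishing the exceptional cases. Finally, the remaining load-bearing claims (the bound $|\cent VH|\leq |V|^{1/2}$ ``up to constants'', the enumeration of the finitely many pairs violating the uniform estimate, and the orbit inspection inside them) are asserted rather than carried out. As written this is a plausible blueprint, not a proof.
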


\begin{theorem}[\mbox{\cite[Theorem~3.4]{DPSS2}}]
\label{TipoIeII}
Let \(T\) be a Sylow \(t\)-subgroup of \(G\cong\SL{t^a}\) (where \(t^a\geq 4\)) and, for a given odd prime divisor \(r\) of \(t^{2a}-1\), let \(R\) be a Sylow \(r\)-subgroup of \(G\). Assuming that \(V\) is a \(t\)-group such that \(G\) acts by automorphisms (not necessarily faithfully) on \(V\) and \(\cent V G=1\), consider the sets 
\[V_I=\{v\in V\,|\, {\textnormal{ there exists }} x\in G {\textnormal{ such that }}R^x\trianglelefteq\cent G v\},{\textnormal{ {\it and}}}\] \[V_{II}=\{v\in V\,|\, {\textnormal{ there exists }} x \in G {\textnormal{ such that }}T^x\trianglelefteq\cent G v\}.\] Then the following conditions are equivalent.
\begin{enumeratei}
\item \(V_I\) and \(V_{II}\) are both non-empty and \(V-\{1\}=V_I\cup V_{II}\).
\item \(G\cong\SL {4}\), and \(V\) is an irreducible \(G\)-module of dimension \(4\) over \(\mathbb{F}_2\). More precisely, \(V\) is the restriction to $G$, embedded as \(\Omega_4^-(2)\) into ${\rm{SL}}_4(2)$, of the standard module of ${\rm{SL}}_4(2)$.
\end{enumeratei}
\end{theorem}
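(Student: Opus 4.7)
The plan is to prove both directions of the equivalence. For (b) implies (a), I would directly verify the claim: with $G\cong \SL{4}\cong A_5$ acting on $V\cong\F_2^4$ via the embedding $\Omega_4^-(2)\hookrightarrow \rm{SL}_4(2)$, the invariant minus-type quadratic form partitions $V\setminus\{0\}$ into two $G$-orbits of sizes $5$ and $10$, with point stabilizers isomorphic to $A_4$ and $S_3$ respectively. Since the Klein four subgroup (the Sylow $2$-subgroup of $A_4$) is normal in $A_4$, the size-$5$ orbit lies in $V_{II}$; since the cyclic subgroup of order $3$ in $S_3$ is normal, the size-$10$ orbit lies in $V_I$ with $r=3$. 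Both sets are non-empty and together cover $V\setminus\{1\}$.

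For (a) implies (b), my plan has three stages. \emph{First}, I reduce to the case where $V$ is an irreducible $\F_t[G]$-module: pick a minimal $G$-invariant subgroup $W\leq V$; it is elementary abelian and, since $\cent W G\leq\cent V G=1$, it is an irreducible $\F_t[G]$-module with no non-zero $G$-fixed points. The argument then shows $V=W$ by excluding any proper extension of $W$ inside $V$, using the fact that such an extension would introduce elements whose centralizers cannot lie in the restricted form required by membership in $V_I$ or $V_{II}$ while maintaining $\cent V G=1$. \emph{Second}, I exclude the extreme cases: if $V\setminus\{1\}=V_{II}$ then $(G,V)$ satisfies $\nq$ with $q=t$, so by Lemma~\ref{SL2Nq} $V$ is the natural $\F_t[G]$-module, on which the stabilizer of any non-zero vector is a Sylow $t$-subgroup; hence $V_I=\emptyset$, contradicting the hypothesis. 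Symmetrically, $V\setminus\{1\}=V_I$ would force $(G,V)$ to satisfy $\nq$ with $q=r$, and Lemma~\ref{SL2Nq} yields only exceptional modules of characteristic different from $t$, impossible since $V$ is a $t$-group.

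\emph{Third}, I pin down $G$ and $V$. By the subgroup classification of $\SL{t^a}$ in Remark~\ref{Subgroups}, a subgroup of $G$ containing a normal Sylow $t$-subgroup lies inside the Borel subgroup $\norm G T$ (type (ii)), while a subgroup containing a normal Sylow $r$-subgroup for an odd $r\mid t^{2a}-1$ lies inside the dihedral normalizer $\norm G R$ (type (i)). The covering hypothesis therefore yields
\[
|V|-1\leq [G:\norm G T]\,(|\cent V T|-1)+[G:\norm G R]\,(|\cent V R|-1),
\]
and explicit evaluation of $|\cent V T|$ and $|\cent V R|$ across the irreducible $\F_t[G]$-modules provided by Steinberg's tensor-product classification shows that this inequality can be met only when $t=2$, $a=2$ and $\dim_{\F_2}V=4$, forcing $G\cong \SL{4}\cong A_5$. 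Among the two $4$-dimensional $\F_2$-irreducibles of $A_5$ with trivial $G$-fixed points, the Galois-restriction of the natural $\SL{4}$-module has $G$ acting transitively on its fifteen non-zero vectors with Sylow-$2$ stabilizers, so $V_I=\emptyset$ there; hence $V$ must be the other $4$-dimensional irreducible, the Steinberg module $L(3)$, which is precisely the $\Omega_4^-(2)$-restriction of the standard $\rm{SL}_4(2)$-module described in (b).

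The main obstacle is the third stage, where one must carry out detailed centralizer and orbit computations on each irreducible $\F_t[G]$-module to rule out all pairs $(t,a)$ other than $(2,2)$. Unlike the cross-characteristic setting of Theorem~\ref{TipoIeIIPieni}, there is no uniform $\nq$-based argument available here: Lemma~\ref{SL2Nq} only handles the two extreme cases, and the covering condition in the mixed regime requires a genuinely module-by-module analysis guided by Steinberg's classification, paired with a careful count of $T$- and $R$-fixed points.
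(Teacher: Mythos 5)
First, a point of reference: this paper does not prove Theorem~\ref{TipoIeII} at all --- it is imported verbatim from \cite[Theorem~3.4]{DPSS2}, whose proof is not reproduced here. So your attempt has to stand on its own, and it does not, although its skeleton is reasonable. Your (b)\(\Rightarrow\)(a) verification is correct (with the implicit caveat, which you half-acknowledge, that it works for \(r=3\) only: neither stabilizer \(A_4\) nor \(S_3\) contains a Sylow \(5\)-subgroup, so \(V_I=\emptyset\) when \(r=5\)). Your stage-2 exclusion of the extreme cases via Lemma~\ref{SL2Nq} is also sound, and in fact it needs no prior reduction: the condition \(\nq\) already forces \(V\) to be elementary abelian and irreducible, by the remark preceding that lemma.

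The genuine gaps are stages 1 and 3, which between them contain essentially all the content of the cited theorem. In stage 1 the sentence ``the argument then shows \(V=W\) by excluding any proper extension\dots whose centralizers cannot lie in the restricted form required'' is not an argument; it is a restatement of what must be proved. The covering hypothesis is inherited by \(G\)-invariant subgroups but \emph{not} by quotients: for \(v\notin W\) one only has \(\cent Gv\leq \cent G{vW}\), so a normal Sylow subgroup of \(\cent Gv\) gives no normal Sylow subgroup of the stabilizer in the quotient. Consequently the dangerous configuration is exactly the one your plan never addresses: \(W\) could be, say, the natural module with \(W-\{1\}\subseteq V_{II}\), while the elements realizing \(V_I\) lie outside \(W\); here Lemma~\ref{SL2Nq} no longer applies to \(V\), and non-trivial extensions really exist --- for example \({\rm H}^1(\SL 4,V_0)\neq 0\) (recorded in Section~3 of this paper) produces a non-split extension \(E\) of the trivial module by \(V_0\) with \(\cent EG=1\), and non-abelian \(t\)-groups with these layers must also be excluded. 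In stage 3, the union bound \(|V|-1\leq (t^a+1)\bigl(|\cent VT|-1\bigr)+|G:\norm GR|\bigl(|\cent VR|-1\bigr)\) is valid but you never evaluate \(|\cent VT|\) and \(|\cent VR|\) across the infinitely many defining-characteristic irreducibles supplied by Steinberg's tensor-product theorem, and it is not evident that this crude count suffices (for restricted-scalar modules \(\cent VT\) can be large relative to \(|V|\), the fixed spaces of distinct conjugates overlap, and one will generally need the full strength of the normality condition \(\cent Gv\leq\norm G{T^x}\) or \(\cent Gv\leq\norm G{R^x}\), not mere containment of a Sylow subgroup). Compare the delicate fixed-point estimates carried out in the proof of Theorem~\ref{Char2}, case (d), for an indication of what such an analysis actually involves. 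As written, your final identification of \(V_1\) for \(t^a=4\) is correct, but the claim that ``explicit evaluation shows the inequality can be met only when \(t=2\), \(a=2\)'' is an assertion standing in for the proof.
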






\section{The structure of the solvable residual}

Let \(G\) be a group having a composition factor isomorphic to \(\SL{2^a}\) (with \(a\geq 2\)), such that \(\Delta(G)\) is connected and has a cut-vertex: as the first step in our analysis, our purpose is to describe the structure of the solvable residual \(K\) of \(G\). In particular we will see that, except for two sporadic cases, either we have \(K\cong\SL{2^a}\), or  \(K\cong\SL{5}\), or \(K\) contains a minimal normal subgroup \(L\) of \(G\) such that either  \(K/L\cong\SL{2^a}\) or  \(K/L\cong\SL{5}\) and \(L\) is the natural module for \(K/L\). 

We collect the main results of this section in the following single statement (which is the counterpart in characteristic $2$ of \cite[Theorem~4.1]{DPSS2}). This will be proved by treating separately the case \(a>2\) and the case \(a=2\), in Theorem~\ref{Char2} and Theorem~\ref{PropA5} respectively.
  
  \begin{theorem}\label{MainAboutK}
Assume that the group $G$ has a composition factor isomorphic to $\SL{2^a}$ with $a\geq 2$, and let $p$ be a prime number. 
Assume also that \(\Delta(G)\) is connected with cut-vertex \(p\). Then, denoting by $K$ the solvable residual of $G$, 
 one of the following conclusions holds.  
\begin{enumeratei} 
\item \(K\) is isomorphic to \(\SL{2^a}\) or to \(\SL{5}\); 
\item \(K\) contains a minimal normal subgroup \(L\) of \(G\) such that \(K/L\) is isomorphic either to \(\SL{2^a}\) or  to  \(\SL{5}\) and \(L\) is the natural module for \(K/L\).
\item \(a=2\), and \(K\) contains a minimal normal subgroup \(L\) of \(G\) such that \(K/L\) is isomorphic to \(\SL{4}\). Moreover, \(L\) is isomorphic to the restriction  to  \(K/L\), embedded  as  $\Omega_4^-(2)$ into $\rm{SL}_4(2)$, of the standard module of $\rm{SL}_4(2)$.
\item \(a=2\), and \(K\) contains a minimal normal subgroup \(L\) of \(G\) such that \(K/L\) is isomorphic to \(\SL{5}\). Moreover, \(L\) is isomorphic to the restriction to  \(K/L\),  embedded in  \({\rm{SL}}_4(3)\), of the standard module of $\rm{SL}_4(3)$.
\end{enumeratei}
\end{theorem}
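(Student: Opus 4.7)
The argument naturally splits into the cases $a \geq 3$ (to become Theorem~\ref{Char2}) and $a = 2$ (Theorem~\ref{PropA5}); the second is the more delicate, because $A_5$ admits the two presentations $\SL 4 \cong \PSL 5$, and this is what allows Lemma~\ref{InfiniteCommutator} to be invoked with $t = 5$ in that range.

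First I would apply Theorem~\ref{0.2} to conclude that $G/R$ is almost simple with socle isomorphic to $\SL{2^a}$, and then apply Lemma~\ref{InfiniteCommutator}---with $t = 2$ when $a \geq 3$, and with the presentation $\PSL 5$ when $a = 2$---to obtain the dichotomy: either $K$ is itself isomorphic to $\SL{2^a}$ or to $\SL 5$, yielding conclusion~(a), or $K$ contains a nontrivial normal subgroup on which every non-principal character of its abelianization is non-$K$-invariant. In the latter case, I would reduce to an $L$ that is a minimal normal subgroup of $G$ contained in $K \cap R$; since $K$ is perfect and the composition factor $\SL{2^a}$ must reappear in $K/L$, standard arguments then ensure that $K/L$ is still a perfect central extension of $\SL{2^a}$ or of $\SL 5$. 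Because $L$ is $G$-minimal, it is elementary abelian of prime exponent $r$, and the non-invariance condition translates via Brauer's permutation lemma into $\cent L K = 0$, so that $L$ is a nontrivial irreducible $K/L$-module over $\F_r$.

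The heart of the proof is then to identify the isomorphism type of $L$ as a $K/L$-module. The plan is to exploit the cut-vertex $p$ of $\Delta(G)$ together with Theorem~\ref{MoretoTiep}: for each non-principal $\lambda \in \irr L$, Clifford correspondents of $\lambda$ induce irreducibly to $K$, and the resulting degrees in $\cd G$ must be compatible with $p$ being a cut-vertex of $\Delta(G)$. This forces $I_K(\lambda)/L$ to normalize a suitable Sylow $q$-subgroup of $K/L$ for each prime $q$ on the ``wrong'' side of the cut, and transferring this back from $\widehat L$ to $L$ via duality yields that $(K/L, L)$ satisfies $\nq$ for some $q$, or more generally falls under the hypotheses of Theorem~\ref{TipoIeIIPieni} or Theorem~\ref{TipoIeII}. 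Lemma~\ref{SL2Nq} together with these two theorems then pins down $L$: for $a \geq 3$, only case~(1) of Lemma~\ref{SL2Nq} can occur (since $2^a \neq 5$, and the composition factor $\SL{2^a}$ is never $\SL{13}$), yielding the natural module of conclusion~(b); for $a = 2$ the natural module (conclusion~(b)), the $\Omega_4^-(2)\hookrightarrow {\rm SL}_4(2)$ restriction (conclusion~(c)) and the ${\rm SL}_4(3)$ restriction (conclusion~(d)) are all genuinely permitted by the cut-vertex hypothesis and must be retained.

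The main obstacle I anticipate is exactly the step just described: performing the inertia-subgroup analysis sharply enough to leave only the four listed module types. For each prime divisor $q$ of $|K/L|$ one must determine which of the subgroup types of $\SL{2^a}$ listed in Remark~\ref{Subgroups} can arise as $I_K(\lambda)/L$ while remaining compatible both with $p$ being a cut-vertex and with the detailed adjacency pattern described in Theorems~\ref{PSL2bis} and~\ref{MoretoTiep}. The exceptional cases of Lemma~\ref{SL2Nq}(2) and of Theorems~\ref{TipoIeIIPieni}~and~\ref{TipoIeII} complicate the $a = 2$ analysis considerably, since they furnish configurations that are genuinely permitted by the hypotheses and so must be accepted rather than excluded; this is why the $a=2$ case is split out into a separate theorem.
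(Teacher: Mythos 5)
Your skeleton matches the paper's: Theorem~\ref{0.2} to make $G/R$ almost simple, Lemma~\ref{InfiniteCommutator} (with $t^a=2^a$ for $a\geq 3$ and with $t^a=5$ for $a=2$) to get the dichotomy, a split into the two theorems \ref{Char2} and \ref{PropA5}, and then an identification of the module $L$ via inertia subgroups, Lemma~\ref{SL2Nq}, and Theorems~\ref{TipoIeIIPieni} and~\ref{TipoIeII}. But as written the proposal has two genuine gaps, one of which you do not even flag as a difficulty.

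The unacknowledged gap is the sentence ``I would reduce to an $L$ that is a minimal normal subgroup of $G$ \dots; standard arguments then ensure \dots''. Lemma~\ref{InfiniteCommutator} only hands you a normal subgroup $L$ of $K$ such that no non-principal character of $L/L'$ is $K$-invariant; it does not make $L$ elementary abelian, nor $G$-minimal. Showing this is a large fraction of the actual work. For $a\geq 3$ one must prove $L'=1$, which in the paper is the three-case analysis (i)--(iii) at the start of the proof of Theorem~\ref{Char2}: it needs the vanishing of ${\rm H}^2(K/L',L'/Z)$ via Curran's proposition, a fully-ramified argument when $L'/Z$ is central, and a delicate analysis of the inertia subgroups of characters of $L'/Z$ (including ruling out the types (iii) and (iv) subgroups and an appeal to Theorem~\ref{TipoIeII}). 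For $a=2$ one must rule out $X\neq 1$ for a chief factor $L/X$, which in Theorem~\ref{PropA5} requires uniqueness of the minimal normal subgroup of $K/Y$, the nilpotent/non-nilpotent dichotomy for $L/Y$, extraspecial-group arguments, Frattini-subgroup and complement arguments, and several GAP checks on perfect groups of order $2^5\cdot|\SL 4|$ and $2^8\cdot|\SL 4|$. None of this is ``standard,'' and without it your $L$ need not be a module at all. (Relatedly, Brauer's permutation lemma only gives $\cent{L/L'}{K}=L'$, i.e.\ fixed-point-freeness of the action on the abelianization; irreducibility has to come from elsewhere, e.g.\ from Zhang's lemma once an $\nq$ condition is established, or from $G$-minimality plus the module identification.)

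The second gap you do acknowledge: the inertia-subgroup analysis that ``pins down $L$'' is only described, not carried out. In the paper this is the ten steps (a)--(j) of Theorem~\ref{Char2} for $a\geq 3$ (primitive prime divisors, extraspecial Sylow $2$-subgroups via \cite[II, Satz~9.23]{Hu}, the triality-module computation over $\F_{2^a}$, and explicit GAP computations for the $\F_2[\SL{16}]$-modules), and for $a=2$ the paper actually takes a different and shorter route than the one you sketch: it does not run the $\nq$/inertia analysis but instead invokes Lemma~\ref{LA5}, a direct (computational, via \cite{KP} and GAP) classification of the irreducible modules $V$ for $\SL 4$ and $\SL 5$ whose orbit graph $\Delta_{orb}(V)$ is not a clique on $\{2,3,5\}$. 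Your plan for $a=2$ is not wrong in principle, but it would still have to be backed by the same computational input, and you would need to be careful that Lemma~\ref{SL2Nq} and Theorems~\ref{TipoIeIIPieni}, \ref{TipoIeII} only cover the $\nq$-type configurations, whereas the modules $V_1$ and $W$ arise from configurations where no single Sylow subgroup is normal in every centralizer.
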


We will then start by treating the case \(a>2\). Before stating the next theorem we recall that, for $m$ and $n$ integers larger than $1$, a prime divisor $q$ of $m^n-1$ is called a {\it{primitive prime divisor}} if $q$ does not divide $m^b -1$ for all $1 \leq b <n$. In this case, $n$ is the order of $m$ modulo $q$, so $n$ divides $q-1$. 
In view of \cite[Theorem~6.2]{MW}, $m^n - 1$ always has primitive prime divisors except when $n = 2$ and $m= 2^c -1$ for some integer $c$ (i.e. $m$ is a Mersenne number), or when $n=6$ and $m= 2$. 

In the following, for a normal subgroup $N$ of a group $G$, and a character $\theta \in \irr N$, we denote by
$\irr{G| \theta}$ the set of all irreducible characters of $G$ that lie over $\theta$.

\begin{theorem}\label{Char2}
Assume that the group $G$ has a composition factor isomorphic to $\SL{2^a}$ with $a>2$, and let $p$ be a prime number. 
Assume also that \(\Delta(G)\) is connected with cut-vertex \(p\). Then, denoting by $K$ the solvable residual of $G$, 
 one of the following conclusions holds.
\begin{enumeratei} 
\item \(K\) is isomorphic to \(\SL{2^a}\); 
\item \(K\) contains a minimal normal subgroup \(L\) of \(G\) such that \(K/L\) is isomorphic to \(\SL{2^a}\) and \(L\) is the natural module for \(K/L\).
\end{enumeratei}

\end{theorem}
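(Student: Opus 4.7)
The approach follows the structure of the analogous odd-characteristic result \cite[Theorem~4.1]{DPSS2}, adapted to characteristic~$2$. First, by Theorem~\ref{0.2}, $G/R$ is almost simple with socle $S\cong\SL{2^a}$ and $\V{G}=\pi(G/R)\cup\{p\}$. I would then apply Lemma~\ref{InfiniteCommutator} with $t=2$ (noting $2^a>4$ since $a>2$) to obtain a dichotomy: either $K\cong\SL{2^a}$, yielding conclusion (a), or $K$ has a proper non-trivial normal subgroup $L_0$ with $K/L_0\cong\SL{2^a}$ such that every non-principal character of $L_0/L_0'$ is non-invariant in $K$. Focusing on the latter case, I would pick $L$ a minimal normal subgroup of $G$ contained in $L_0$; since $L\leq R$, it is elementary abelian of some prime order $r$, and the goal becomes $r=2$, $L=L_0$, and $L$ the natural module for $K/L\cong\SL{2^a}$.

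The heart of the argument is to convert the cut-vertex hypothesis into the algebraic condition $\nq$ on the pair $(K/\cent{K}{L},L)$. Recall that, by Theorem~\ref{PSL2bis}(i), $\Delta(S)$ is the disjoint union of three cliques on $\{2\}$, $\pi(2^a-1)$ and $\pi(2^a+1)$; in $\Delta(G)$ these are reconnected, and since $p$ is the unique cut-vertex, no edge joining two of these three cliques may avoid $p$. For a non-principal $\lambda\in\irr L$ (non-invariance in $K$ being transferred from the hypothesis on characters of $L_0/L_0'$ after verifying how $L$ sits in $L_0/L_0'$), Clifford theory says that irreducible characters of $K$ lying over $\lambda$ have degree divisible by $[K:I_K(\lambda)]$, while $I_K(\lambda)/L$, modulo the kernel of the action on $L$, embeds into $K/L_0\cong\SL{2^a}$ as one of the subgroup types listed in Remark~\ref{Subgroups}. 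A case analysis through those subgroup types shows that, unless $I_K(\lambda)/L$ contains a normal Sylow $q$-subgroup of $K/L_0$ for some prime $q$, one can construct characters of $G$ whose degrees link two of the above cliques by an edge avoiding $p$, contradicting the cut-vertex hypothesis. This yields the condition $\nq$.

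With $\nq$ in hand, Lemma~\ref{SL2Nq} closes the argument: since $a>2$, the degenerate case $t^a=5$ is excluded, as is case (2) of the lemma (which requires $t\in\{5,13\}$), so only case (1) applies, forcing $r=q=2$ and $L$ to be the natural module for $K/L$ with $|L|=2^{2a}$. To conclude $L=L_0$, I would argue that a strict inclusion $L<L_0$ allows one either to repeat the previous analysis to a further minimal normal subgroup of $G/L$ inside $L_0/L$, or to invoke Theorems~\ref{TipoIeIIPieni} and~\ref{TipoIeII} to produce characters whose degrees again violate the cut-vertex structure, leading to a contradiction.

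The main obstacle is the second paragraph, namely the detailed case analysis required to extract the condition $\nq$ from the cut-vertex hypothesis: one must simultaneously account for the location of $p$ (in $\{2\}$, in $\pi(2^a\pm 1)$, or outside $\pi(S)$), for the subgroup type realized by $I_K(\lambda)/L$, and for the contributions of outer automorphisms of $S$ present in $G/R$ (controlled via Theorem~\ref{MoretoTiep}). Careful bookkeeping is also needed in handling the exceptional subgroup types (iii) and (iv) of Remark~\ref{Subgroups}, where the normal Sylow structure is less immediately visible.
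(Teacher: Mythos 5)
Your skeleton matches the paper's (Theorem~\ref{0.2}, then Lemma~\ref{InfiniteCommutator}, then a reduction to the condition $\nq$ so that Lemma~\ref{SL2Nq} forces the natural module), but the proposal has genuine gaps at the two places where the real work lies. First, you descend to a minimal normal subgroup $L$ of $G$ inside $L_0$ and hope to ``transfer'' the non-invariance hypothesis from $\irr{L_0/L_0'}$ to $\irr L$; this transfer can fail outright, since such an $L$ may lie inside $L_0'$ or even in $\zent K$, in which case its characters are $K$-invariant and your whole mechanism stalls. The paper works in the opposite direction: it stays with $L_0$ (its $L$), analyses the inertia subgroups of characters of the quotient $L_0/L_0'$ and of chief factors $L_0/X$ (to which non-invariance genuinely descends), and only afterwards proves $L_0'=1$. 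That last step is itself nontrivial: it occupies cases (i)--(iii) of the paper's proof and uses Curran's cohomology vanishing for a split extension, fully ramified characters over central chief factors, and the triviality of the Schur multiplier --- none of which appears in your sketch of ``$L=L_0$''.

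Second, and more seriously, your claim that ``unless $I_K(\lambda)/L$ contains a normal Sylow $q$-subgroup \dots one can construct characters \dots contradicting the cut-vertex hypothesis'' does not capture what must actually be proved. The condition $\nq$ requires a \emph{single} prime $q$ working for \emph{all} non-principal characters simultaneously, and the hardest configurations are precisely those in which every inertia subgroup does contain a normal Sylow subgroup of $K/L_0$, but for varying primes or for odd primes: all inertia subgroups of type (ii) of even order but not full (paper's step (d)), mixtures of types (i$_-$), (i$_+$) and (ii) (steps (e)--(h)), and the $A_4$/$A_5$ cases (steps (b),(c)). Excluding these requires primitive prime divisors, an extraspecial $2$-group argument via \cite[II, Satz~9.23]{Hu}, a Sylow-counting argument combined with an explicit analysis of the triality module of $\SL{2^{3c}}$, GAP computations for the $\F_2$-modules of $\SL{16}$ of dimensions $8$, $16$ and $32$, and Theorems~\ref{TipoIeIIPieni} and~\ref{TipoIeII}. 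A generic ``Clifford theory links two cliques by an edge avoiding $p$'' argument does not dispose of these cases, because in them the resulting character degrees need not create any forbidden edge. You correctly flag this case analysis as the main obstacle, but as written the proposal does not contain the ideas needed to carry it out.
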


\begin{proof}

 Let $R$ be the solvable radical of $G$. By Theorem \ref{0.2}, we have that $G/R$ is an almost-simple group  with socle isomorphic to $\SL {2^a}$, and  $\V G=\pi({G/R})\cup\{p\}$.
Note that, since \(a>2\), Lemma~\ref{InfiniteCommutator} applies here; so either we get conclusion (a), or \(K\) has a non-trivial normal subgroup \(L\) such that \(K/L\) is isomorphic to \(\SL{2^a}\), and every non-principal irreducible character of \(L/L'\) is not invariant in \(K\). Therefore, we can assume that the latter condition holds.

 Consider then a non-principal \(\xi\) in \(\irr{L/L'}\): as \(I_K(\xi)/L\) is a proper subgroup of \(K/L\cong\SL{2^a}\), its possible structures are described in Remark~\ref{Subgroups}. In particular, if \(2\) is not a divisor of \(|K:I_K(\xi)|\), then \(I_K(\xi)/L\) contains a Sylow \(2\)-subgroup of \(K/L\) as a normal subgroup. Assuming for the moment that this happens for every non-principal \(\xi\in\irr{L/L'}\), Lemma~\ref{SL2Nq} (together with the paragraph preceding it) yields that the dual group \(\widehat{L/L'}\) is the natural module for \(K/L\), and the same holds for $L/L'$ by self-duality; so, in order to get the desired conclusion, we only have to show that \(L'\) is trivial (note that, once this is proved, \(L=\oh 2 K\) is a minimal normal subgroup of \(G\)), and this is what we do next. 

 For a proof by contradiction assume \(L'\neq 1\), and consider a chief factor \(L'/Z\) of \(K\). As observed in Remark~\ref{NaturalExtended}, the graph \(\Delta(K/L')\) has two connected components having vertex sets \(\{2\}\) and $\pi(K/L')-\{2\}$, respectively; since the vertex set of \(\Delta(G)\) is \(\pi(G/R)\cup\{p\}\) and, also in view of Theorem~\ref{MoretoTiep}, \(\pi(G/R)-\{2\}\) is now a clique of \(\Delta(G)\), we see that the cut-vertex \(p\) of \(\Delta(G)\) cannot be \(2\), and
 that $p$ is the unique vertex adjacent to  \(2\) in \(\Delta(G)\). 

Now, let \(\lambda\) be a non-principal irreducible character of \(L'/Z\), and let \(\chi\in\irr{K/Z\, |\, \lambda}\). If \(\psi\) is an irreducible constituent of \(\chi_{L/Z}\) lying over \(\lambda\), then clearly \(\psi(1)\neq 1\), and since \(L'/Z\) is an abelian normal subgroup of \(L/Z\) whose index is a \(2\)-power, we conclude that \(\psi(1)\) (whence \(\chi(1)\)) is a multiple of \(2\). As a consequence, we get \(\pi(|K:I_K(\lambda)|)\subseteq\{2,p\}\). Observe that \(I_K(\psi)\) is a proper subgroup of \(K\), as otherwise (the Schur multiplier of \(K/L\) being trivial) \(\psi\) would extend to \(K\) yielding a contradiction via Gallagher's theorem; of course \(I_K(\lambda)\) is a proper subgroup of \(K\) as well, unless \(L'/Z\) lies in \(\zent{K/Z}\).

We conclude this part of the proof by considering three situations that are exhaustive, and that all lead to a contradiction.

{\bf{(i)}} \(L'/Z\not\subseteq\zent{L/Z}.\) 

\noindent Consider the normal subgroup \(\cent{L'/Z}{L/L'}\) of \(K/Z\); since \(L'/Z\) is a chief factor of \(K\) and it is not centralized by \(L/L'\), we deduce that \(\cent{L'/Z}{L/L'}\) is trivial. Thus we can apply the proposition appearing in the Introduction of \cite{Cu}, which ensures that the second cohomology group \({\rm {H}}^2(K/L',L'/Z)\) is trivial, and therefore \(K/Z\) is a split extension of \(L'/Z\); in particular, every irreducible character of \(L'/Z\) extends to its inertia subgroup in \(K/Z\). Now, let \(\lambda\) be any non-principal character in \(\irr{L'/Z}\): since \(\pi(|K:I_K(\lambda)|)\subseteq\{2,p\}\), Gallagher's theorem implies that \(I_K(\lambda)/L'\) contains a unique Sylow \(q\)-subgroup of \(K/L'\) for every prime \(q\in\pi(2^{2a}-1)-\{p\}\).  But this yields a contradiction via, for example, Proposition~3.13 of \cite{DPSS}; in fact, according to that result, \(K/L'\) should have a cyclic solvable radical (whereas \(L/L'=\oh 2{K/L'}\) is non-cyclic).

{\bf{(ii)}} \(L'/Z\sbs \zent{L/Z}\), but \(L'/Z\not\subseteq \zent{K/Z}.\)


\noindent First, we note that \(L'/Z\) is a \(2\)-group in this case, as otherwise \(L/Z\) would be isomorphic to the direct product \((L'/Z)\times (L/L')\) and it would then be abelian, a clear contradiction. Also, for a non-principal \(\lambda\) in \(\irr{L'/Z}\), we already observed that \(I_K(\lambda)\) is a proper subgroup of \(K\) such that \(\pi(|K:I_K(\lambda)|)\subseteq\{2,p\}\).

We claim that \(I_K(\lambda)/L\) cannot be a subgroup of type (iv) of \(K/L\) unless it is also of type (iii). In fact, assume \(I_K(\lambda)/L\cong\SL{2^b}\) where \(b>2\) and \(a=bc\) for some \(c>1\). If \(c\) is an odd number, then \(2^b+1\) is a divisor of \(2^a+1\) and it is easy to see that \(\pi(|K:I_K(\lambda)|)\) contains at least two odd primes, not our case. On the other hand, if \(c\) is even, then \(2^{2b}-1\) divides \(2^a-1\) and again (recalling~cite[Proposition 3.1]{MW}) we reach a contradiction unless \(c=2\) and \(p=2^a+1\) (note that \(p\) is neither \(3\) nor \(5\)). Now we look at \(I_K(\psi)\), where \(\psi\) lies in \(\irr{L/Z\,|\,\lambda}\) (recall that \(\psi(1)\) is a multiple of \(2\), and that \(I_K(\psi)\) is contained in \(I_K(\lambda)\) because \(L'/Z\) is central in \(L/Z\)): we have \(\pi(|I_K(\lambda):I_K(\psi)|)\subseteq\{2\}\), and therefore \(I_K(\psi)/L\) is either the whole \(I_K(\lambda)/L\) or it is necessarily isomorphic to \(A_5\). In any case we get the adjacency of \(2\) with odd primes different from \(p\), a contradiction.

So, assume that \(I_K(\lambda)/L\) is of type (iii) isomorphic to \(A_4\): then there must be a prime in \(\pi(|K:I_K(\lambda)|)-\{2,3\}\), and this prime is necessarily \(p\). This forces the \(3\)-part of \(|K/L|\) to be \(3\), yielding the contradiction that either \(2^a-1=3\) or \(2^a+1=3\). On the other hand, let \(I_K(\lambda)/L\) be of type (iii) isomorphic to \(A_5\). If \(\pi(|K:I_K(\lambda)|)-\{3,5\}\subseteq\{2\}\), then either the \(3\)-part or the \(5\)-part of \(|K/L|\) is forced to be \(3\) or \(5\) respectively, and we get a contradiction from the fact that one among \(3\) and \(5\) is \(2^a-1\) or \(2^a+1\); if  \(\pi(|K:I_K(\lambda)|)-\{3,5\}\) contains an odd prime (which is \(p\)), then the \(3\)-part and the \(5\)-part of \(|K/L|\) are \(3\) and \(5\) respectively, and we get the same contradiction as before unless \(3\cdot 5=2^a-1\), i.e. \(K/L\cong\SL{2^4}\) and \(p=17\) is the only vertex adjacent to \(2\) in \(\Delta(G)\).
But in the latter case, taking \(\psi\in\irr{L/Z\,|\,\lambda}\), we see that \(I_K(\psi)\) cannot be a proper subgroup of \(I_K(\lambda)\) (otherwise \(|I_K(\lambda):I_K(\psi)|\) would be divisible by \(3\) or \(5\) and we would get the adjacency between one of these primes and \(2\)); thus, recalling that \(I_K(\psi)\subseteq I_K(\lambda)\), we get \(I_K(\psi)/L\cong A_5\). Working with character triples we now get the adjacency between \(2\) and \(3\), again a contradiction. 
Our conclusion so far is that, for every non-principal \(\lambda\in\irr{L'/Z}\), the subgroup \(I_K(\lambda)/L\) of \(K/L\) is either of type (i) or of type (ii).

Next, assume that \(I_K(\lambda)/L\) is a subgroup of type (i$_+$). Then we get \(p=2^a-1\) and, since \(2\) cannot be adjacent in \(\Delta(G)\) to any prime in \(\pi(2^a+1)\), for every non-principal \(\nu\in\irr{L'/Z}\) the subgroup \(I_K(\nu)/L\) must be either of type (i$_+$) containing a unique Hall \(\pi(2^a+1)\)-subgroup of \(K/L\), or of type (ii) containing a unique Sylow \(2\)-subgroup of \(K/L\). Now, the former situation cannot occur for every \(\nu\), by Lemma~\ref{SL2Nq}; on the other hand, if the latter situation occurs for some non-principal \(\nu\in\irr{L'/Z}\), then we reach a contradiction via Theorem~\ref{TipoIeII} (recall that $L'/Z$ is a $2$-group).
 
If \(I_K(\lambda)/L\) is a subgroup of type (i$_-$) then, as above, for every non-principal \(\nu\in\irr{L'/Z}\), the subgroup \(I_K(\nu)/L\) must be either of type (i$_-$) containing a unique Hall \(\pi(2^a-1)\)-subgroup of \(K/L\) or of type (ii). Observe that if, in the latter case, \(|K:I_K(\nu)|\) is divisible by \(2\), then \(I_K(\nu)/L\) must contain a Hall \(\pi(2^a-1)\)-subgroup of \(K/L\); hence, by the structure of the subgroups of type (ii), \(I_K(\nu)/L\) should contain a full Sylow \(2\)-subgroup of \(K/L\) as well, against the fact that \(|K:I_K(\lambda)|\) is even. Therefore \(I_K(\nu)/L\)  actually contains a (unique) Sylow \(2\)-subgroup of \(K/L\) whenever it is a subgroup of type (ii), and now we reach a contradiction as in the  previous paragraph. 

We conclude that, for every non-principal \(\lambda\in\irr{L'/Z}\), the subgroup \(I_K(\lambda)/L\) of \(K/L\) is of type~(ii), and the same argument as in the paragraph above shows that it must contain a full Sylow \(2\)-subgroup of \(K/L\). This yields (via Lemma~\ref{SL2Nq}) that \(L'/Z\) is the natural module for \(K/L\), so that \(I_K(\lambda)/L\) is a Sylow \(2\)-subgroup of \(K/L\) for every non-principal \(\lambda\in\irr{L'/Z}\). Considering \(\psi\in\irr{L/Z}\) lying over such a \(\lambda\), and recalling once again that \(\psi(1)\) is even and \(I_K(\psi)\subseteq I_K(\lambda)\), Clifford's theory yields that the primes in \(\pi(K/L)\) are pairwise adjacent in \(\Delta(G)\) and, also in view of Theorem~\ref{MoretoTiep}, every odd prime divisor of \(|K/L|\) is a complete vertex of \(\Delta(G)\). This is clearly not compatible with the existence of a cut-vertex of \(\Delta(G)\).
 
{\bf{(iii)}} \(L'/Z\subseteq\zent{K/Z}.\)

\noindent As in case (ii), we have that $L'/Z$ is a $2$-group. If $\lambda$ is a non-principal irreducible character of $L'/Z$, then $\lambda$ is fully ramified with respect to the $K/Z$-chief factor $L/L'$ (see Exercise~6.12 of \cite{Is}); therefore, the unique \(\psi\) in \(\irr{L/Z\,|\,\lambda}\) is such that \(I_K(\psi)=I_K(\lambda)=K\). The fact that the Schur multiplier of \(K/L\) is trivial implies that \(\psi\) extends to \(K\), yielding a clear contradiction via Gallagher's theorem.


\smallskip
To conclude the proof, we will show that \(I_K(\xi)/L\) contains a unique Sylow \(2\)-subgroup of \(K/L\) for every non-principal \(\xi\) in \(\irr{L/L'}\). To this end, we  will proceed through a number of steps.

\smallskip
{{\bf{(a)}} For every non-principal \(\xi\in\irr{L/L'}\), the subgroup \(I_K(\xi)/L\) of \(K/L\) cannot be of type (iv), unless it is also of type (iii).}

\noindent For a proof by contradiction, let \(\xi\in\irr{L/L'}\) be such that \(I_K(\xi)/L\cong\SL{2^b}\) for some \(b> 2\) properly dividing \(a\). 
Thus, \(2\) is a divisor of \(|K:I_K(\xi)|\). Since the Schur multiplier of \(I_K(\xi)/L\) is trivial, \(\xi\) extends to \(I_K(\xi)\) and this yields (via Clifford's correspondence and Gallagher's theorem) that \(2\) is adjacent in \(\Delta(G)\) to every prime in \(\pi(K/L)-\{2\}\). Moreover, taking into account Theorem~\ref{MoretoTiep} (which, together with Theorem~\ref{PSL2bis}, will be freely used from now on and should be kept in mind), also each prime in \(\pi(G/R)-\pi(K/L)\) is adjacent to every prime in \(\pi(K/L)-\{2\}\). Finally, \(2^{2a}-1\) has a primitive prime divisor \(q\) because \(a\neq 3\); this prime $q$, which clearly belongs to \(\pi(2^a+1)\), is a divisor of \(|K:I_K(\xi)|\), so every prime in \(\pi(2^b-1)\) is  adjacent to \(q\) in \(\Delta(G)\). As easily seen, this setting is not compatible with the existence of a cut-vertex of \(\Delta(G)\).

\smallskip
{{\bf{(b)}}} For every non-principal \(\xi\in\irr{L/L'}\), the subgroup \(I_K(\xi)/L\) of \(K/L\) cannot be isomorphic to~\(A_5\). 

\noindent Assume the contrary, and take \(\xi\in\irr{L/L'}\) such that \(I_K(\xi)/L\cong A_5\). 
Working with character triples, we observe that \(\irr{K\,|\, \xi}\) contains characters whose degrees are divisible by every prime in \(\pi(|K:I_K(\xi)|)\cup\{3\}\), which contains \(\pi(K/L)-\{5\}\) (note that \(2\) divides $|K:I_K(\xi)|$ because \(2^a>4\)); thus the \(5\)-part of \(|K/L|\) is \(5\), otherwise the primes in \(\pi(K/L)\) would be pairwise adjacent in \(\Delta(G)\), easily contradicting the existence of a cut-vertex of \(\Delta(G)\). Observe also that, since neither \(2^a-1\) nor \(2^a+1\) can be \(5\), there exists an odd prime \(q\) in \(\pi(K/L)-\{5\}\) that is adjacent to \(5\) in \(\Delta(K/L)\); as \(q\) is now a complete vertex in the subgraph of \(\Delta(G)\) induced by \(\pi(G/R)\), we get \(q=p\), and it is readily seen that no other prime divisor of \(|K/L|\) can be adjacent to \(5\) in \(\Delta(G)\). This implies on one hand that \(\xi\) does not have an extension to \(I_K(\xi)\) (otherwise, by Gallagher's theorem, we would get the adjacency between $5$ and $2$ in \(\Delta(G)\)), which in turn yields (via \cite[8.16, 11.22, 11.31]{Is}) that the order of \(L/L'\) is divisible by~\(2\); on the other hand, one among the sets \(\pi(2^a-1)\) and \(\pi(2^a+1)\) is in fact \(\{5,p\}\).


Now, since \(2^{2a}-1\) is divisible by \(5\), we see that \(a\) must be even, so \(2^2-1=3\) divides \(2^a-1\). Assuming for the moment \(\pi(2^a-1)=\{5,p\}\), we then get \(p=3\), and we also note that \(2^a-1\) has a primitive prime divisor (otherwise \(a\) would be \(6\), but \(2^6-1=63\) is not divisible by \(5\)). Certainly \(3\) is not such a divisor, as \(3\) divides \(2^2-1\) and \(a>2\); hence \(5\) is a primitive prime divisor for \(2^a-1\), so we get \(a=4\) and \(K/L\cong\SL {16}\). But in this case, since \(|L/L'|\) is even, we can consider a chief factor \(L/X\) of \(K\) whose order is a \(2\)-power: the dual group of \(V\) of \(L/X\) is then an irreducible module for \(\SL{16}\) over $\F_2$. It is well known (see \cite{BN}, for instance) that such modules all have a dimension belonging to \(\{8,16,32\}\); if \(V\) is the natural module (of dimension \(8\)) for \(K/L\cong\SL{16}\), then the centralizer in \(K/L\) of every non-trivial element of \(V\) is a Sylow \(2\)-subgroup of \(K/L\), yielding the contradiction that \(5\) is adjacent to \(17\) in \(\Delta(G)\). Also, a direct computation with GAP~\cite{GAP} shows that in the modules of dimensions \(16\) and \(32\) there are elements lying in regular orbits for the action of \(K/L\), thus the primes in \(\Delta(K/L)\) would be pairwise adjacent in \(\Delta(G)\). Only one module is left, which has dimension \(8\) and is not the natural module: to handle this, we can see via GAP~\cite{GAP} that in all possible isomorphism types of extensions of \(V\) by \(\SL{16}\) the set of irreducible character degrees is \(\{1, 15, 16, 17, 51, 68, 204, 255, 272, 340\}\), so \(\pi(K/L)\) would again be a set of pairwise adjacent vertices of \(\Delta(G)\).

It remains to consider the case when \(5\) divides \(2^a+1\), hence \(\pi(2^a+1)=\{5,p\}\), and again we choose a chief factor \(L/X\) of \(K\) that is a \(2\)-group. Now, the dual group of \(L/X\) can be viewed as a (non-trivial) irreducible \(K/L\)-module over \(\F_2\), and if \(T/L\) is a Sylow \(2\)-subgroup of \(K/L\), then clearly there exists a non-principal \(\mu\) in \(\irr{L/X}\) which is fixed by \(T/L\) (so, such that \(I_K(\mu)/L\) contains \(T/L\)); as \(I_K(\mu)/L\) is a proper subgroup of \(K/L\), the only possibility for \(I_K(\mu)/L\) is to be of type (ii). Moreover, since \(5\) is only adjacent to \(p\) in \(\Delta(G)\), no prime divisor of \(2^a-1\) lies in \(\pi(|K:I_K(\mu)|)\), thus in fact \(I_K(\mu)/L=\norm{K/L}{T/L}\) has irreducible characters of degree \(2^a-1\).   Now, \(\mu\) does not extend to \(I_K(\mu)\), as otherwise (by Gallagher's theorem and Clifford correspondence) we would get adjacencies in \(\Delta(G)\) between \(5\) and all the primes in \(\pi(2^a-1)\); but then, for \(\psi\in\irr{I_K(\mu)\,|\,\mu}\) and \(\theta\) an irreducible constituent of \(\psi_{T/X}\) lying over \(\mu\), we have that \(\theta(1)\) is a \(2\)-power larger than \(1\) (otherwise \(\psi\) would be an extension of \(\mu\) to \(T\), and \(\mu\) would then extend to the whole \(I_K(\mu)\)). As a consequence, \(2\) divides \(\psi(1)\) and we get the adjacency in \(\Delta(G)\) between \(5\) and \(2\). This is the final contradiction that rules out the case \(I_K(\xi)/L\cong A_5\).

\smallskip
{{\bf{(c)}}} For every non-principal \(\xi\in\irr{L/L'}\), the subgroup \(I_K(\xi)/L\) of \(K/L\) cannot be isomorphic to~\(A_4\). 

\noindent  Assume \(I_K(\xi)/L\cong A_4\) for some \(\xi\in\irr{L/L'}\). Then we see at once that the primes in \(\pi(K/L)-\{3\}\) are pairwise adjacent in \(\Delta(G)\), thus the $3$-part of \(|K/L|\) is \(3\) and we get the same conclusions as in the first paragraph of (b) with \(3\) in place of \(5\): the cut-vertex \(p\) is an odd prime and it is the unique neighbour of \(3\) in \(\Delta(G)\) among the primes in \(\pi(K/L)\), and one among the sets \(\pi(2^a-1)\) and \(\pi(2^a+1)\) is \(\{3,p\}\).


Assuming first \(\pi(2^a-1)=\{3,p\}\), we see that \(a\neq 6\) because the \(3\)-part of \(2^6-1\) is \(3^2\). Hence \(2^a-1\) has a primitive prime divisor, which is necessarily \(p\). Note that \(a\) cannot be a prime number, as otherwise it would be odd and \(K/L\) would not have subgroups isomorphic to \(A_4\); moreover, If \(k\) is a divisor of \(a\) such that \(1<k<a\), then \(2^k-1\) divides \(2^a-1\) and is coprime to \(p\), so \(2^k-1\) must be \(3\) and \(k\) is \(2\). We conclude that \(a\) is \(4\), so \(K/L\cong\SL{16}\), and we reach a contradiction as in the second paragraph of (b). 

As regards the case \(\pi(2^a+1)=\{3,p\}\), the same argument as in the last paragraph of (b) (replacing \(5\) with \(3\)) completes the proof. 

\smallskip
{{\bf{(d)}}} The subgroups \(I_K(\xi)/L\) of \(K/L\), for \(\xi\) non-principal in \(\irr{L/L'}\), cannot be all of type~(ii) and of ever order, unless each of them contains a (unique) Sylow \(2\)-subgroup of \(K/L\).


\noindent Let us assume that all the subgroups \(I_K(\xi)/L\) of \(K/L\) (for \(\xi\) non-principal in \(\irr{L/L'}\)) are of type~(ii) and of even order, but there exists \(\xi_0\in\irr{L/L'}\) such that \(2\) divides \(|K:I_K(\xi_0)|\). In this setting we observe that \(2^a-1\) does not divide \(|I_K(\xi_0)/L|\), because \(I_K(\xi_0)/L\) is a Frobenius group whose kernel is its unique Sylow \(2\)-subgroup \(T_0/L\), and we are assuming \(|T_0/L| =2^f<2^a\). Therefore there exists \(r\in\pi(2^a-1)\cap\pi(|K:I_K(\xi_0)|)\), and Clifford's correspondence yields that \(\{2,r\}\cup\pi(2^a+1)\) is a set of pairwise adjacent vertices of \(\Delta(G)\). It follows that \(r\) is adjacent in \(\Delta(G)\) to every prime in \(\pi(G/R)-\{r\}\), thus \(r\) is the cut-vertex \(p\), and no other prime in \(\pi(2^a-1)\) can have any neighbour in \(\{2\}\cup\pi(2^a+1)\); in particular, no prime in \(\pi(2^a-1)-\{p\}\) shows up as a divisor of \(|K:I_K(\xi)|\) for any \(\xi\in\irr{L/L'}\). Note also that a primitive prime divisor of \(2^a-1\) cannot lie in \(\pi(I_K(\xi_0)/L)\), as otherwise it would divide \(2^f-1\) (and \(f<a\)); so, if \(a\neq 6\), \(p\) is forced to be the unique primitive prime divisor of \(2^a-1\). Observe finally that the \(p'\)-part of \(2^a-1\) is not \(1\), otherwise \(p\) would not be a cut-vertex of \(\Delta(G)\). Thus there exists a prime \(q\in\pi(2^a-1)-\{p\}\) such that, 
for every \(\xi\in\irr{L/L'}\), the subgroup \(I_K(\xi)/L\) contains a Sylow \(q\)-subgroup of \(K/L\).

Furthermore, the character \(\xi_0\) does not extend to \(I_K(\xi_0)\), as otherwise we would get characters in \(\irr{K\,|\,\xi_0}\) whose degree is divisible by \(q\) and every prime in \(\{2\}\cup\pi(2^a+1)\), not our case; so \(|L/L'|\) is even, and there exists a chief factor \(L/X\) of \(K\) whose order is a \(2\)-power. Note that, by the conclusion  in the paragraph above, the subgroups of the kind \(I_K(\xi)/L\) for \(\xi\) non-principal in \(\irr{L/X}\) are not Sylow \(2\)-subgroups of \(K/L\), thus \(L/X\) is not the natural module for \(K/L\); this in turn implies (via Lemma~\ref{SL2Nq}) that, for some non-principal \(\xi\in\irr{L/X}\), \(I_K(\xi)/L\) does not contain a full Sylow \(2\)-subgroup of \(K/L\). In other words, we can assume that \(\xi_0\) is in fact an irreducible character of \(L/L'\) whose kernel has index \(2\) in \(L\).

Assume for the moment that \(a\) is an even number different from \(6\) (say, \(a=2b)\): as \(2^b-1\) is coprime to \(p\), we get that \(2^b-1\) divides the order of (a Frobenius complement of) \(I_K(\xi_0)/L\),  and so \(|T_0/L|-1=2^f-1\) is a multiple of \(2^b-1\). This forces \(f\) to be a multiple of \(b\) and, since \(f<a=2b\), the only possibility is \(f=b\); note that \(T_0/L\) is then a minimal normal subgroup of \(I_K(\xi_0)/L\). 
The fact that \(\xi_0\) does not extend to  its inertia subgroup in \(K\) also implies that \(T_0/\ker{\xi_0}\) is a non-abelian \(2\)-group; thus \(L/\ker{\xi_0}\), which has order \(2\), is in fact the derived subgroup of \(T_0/\ker{\xi_0}\). Moreover, the normal subgroup \(Z/\ker{\xi_0}=\zent{T_0/\ker{\xi_0}}\) of \(I_K(\xi_0)/\ker{\xi_0}\) cannot be larger than \(L/\ker{\xi_0}\), because \(T_0/L\) is a minimal normal subgroup of \(I_K(\xi_0)/L\) and clearly \(Z/L\) is not the whole \(T_0/L\). We deduce that \(T_0/\ker{\xi_0}\) is an extraspecial \(2\)-group, so (\(b\) is even and) an application of \cite[II, Satz~9.23]{Hu} yields the contradiction that \(2^b-1\) divides \(2^{b/2}+1\).

If \(a=6\), then \(p\) can be either \(3\) or \(7\). In the former case, \(7\) divides \(|I_K(\xi_0)/L|\) and so \(T_0/L\) has order \(2^3\); the same argument as above shows that \(T_0/\ker {\xi_0}\) is an extraspecial \(2\)-group, a clear contradiction. On the other hand, if \(p=7\), then \(|I_K(\xi_0)/L|\) should be a multiple of \(9\), but \(9\) is not a divisor of \(2^f-1\) for any \(f<6\), contradicting the fact that \(I_K(\xi_0)/L\) is a Frobenius group with kernel \(T_0/L\).

It remains to treat the case when \(a\) is odd. In this case, we start by fixing a Sylow \(q\)-subgroup \(Q\) of \(K/L\): if a non-principal \(\xi\in\irr{L/X}\) is stabilized both by \(Q\) and by another \(Q_1\in\syl q{K/L}\), then \(Q\) and \(Q_1\) are contained in the same subgroup of type (ii) of \(K/L\), whence in the normalizer of a suitable Sylow \(2\)-subgroup of \(K/L\). By Lemma~\ref{PSL2}, \(Q\) normalizes precisely two Sylow \(2\)-subgroups of \(K/L\), and since these normalizers contain a total number of \(2^a\) Sylow \(q\)-subgroups each, there are at most \(2(2^a-1)\) choices for \(Q_1\). On the other hand, the total number of Sylow \(q\)-subgroups of \(K/L\) is \(2^{a-1}(2^a+1)\), so there certainly exists an element \(h\in K/L\) such that no non-trivial element in the dual group \({\widehat{L/X}}\) of \(L/X\) is centralized by both \(Q\) and \(Q^h\). As a consequence, setting \(|L/X|=2^d\), we get \(|\cent{{\widehat{L/X}}}{Q}|\leq 2^{d/2}\), and then \[2^d-1\leq(2^{d/2}-1)\cdot 2^{a-1}\cdot(2^a+1).\]
It is easily checked that the above inequality yields \(d<4a\) and, since \(a\) is odd, Lemma~3.12 in \cite{PR} (whose hypotheses require \(d\leq 3a\), but whose proof works assuming \(d<4a\) as well) leaves only one possibility for the isomorphism type of the \(K/L\)-module \({\widehat{L/X}}\) over \(\F_2\). First of all, \(a\) is a multiple of \(3\) (say \(a=3c\)) and \(d=8c\); then, denoting by \(R(1)\) the natural module for \(K/L\) over \(\F_{2^a}\) and by \(\omega\) an automorphism of order \(3\) of \(\F_{2^a}\), we have that \({\widehat{L/X}}\) is a ``triality module", which can be described as follows. Start from the \(K/L\)-module \(V=R(1)\otimes R(1)^\omega\otimes R(1)^{\omega^2}\) over \(\F_{2^a}\) (or one of its Galois twists), and observe that the field of values of (the character of) \(V\) is \(\F_{2^c}\); now, restricting the scalars to \(\F_{2^c}\), \(V\) is a homogeneous \(K/L\)-module and we take an irreducible constituent of it. This irreducible constituent remains irreducible if the scalars are restricted further to $\F_2$, and this is the \(\F_{2}[K/L]\)-module we are considering. In order to finish the proof for this case, it will be enough to show that there exist non-trivial elements of \(V\) whose centralizer in \(K/L\) is not a subgroup of type (ii).

Recall that the elements of \(\SL{2^a}\) whose order is a divisor of \(2^a+1\) are conjugate to elements of the form \(x=\left(
\begin{array}{cc}
0 & 1 \\
1 & \lambda
\end{array}
\right)\), where \(\lambda=\mu+\mu^{2^a}\) for $\mu\in \F_{2^{2a}}-\{1\}$ such that \(\mu^{2^a+1}=1\). The action of such an \(x\) on \(V\) is of course given by the Kronecker product \[\left(
\begin{array}{cc}
0 & 1 \\
1 & \lambda
\end{array}
\right)\otimes\left(
\begin{array}{cc}
0 & 1 \\
1 & \lambda^{\omega}
\end{array}
\right)\otimes\left(
\begin{array}{cc}
0 & 1 \\
1 & \lambda^{\omega^2}
\end{array}
\right).\] Now, setting \(\K=\F_{2^{2a}}\) and \(V^{\K}=V\otimes\K\), we have \(\dim_{\K}\cent{V^{\K}}x=\dim_{\F_{2^a}}\cent V x\); moreover, \(V^{\K}=R(1)^{\K}\otimes(R(1)^{\K})^\omega\otimes (R(1)^{\K})^{\omega^2}\), so the action of \(x\) on \(V^{\K}\) is expressed by the same Kronecker product as above. But \(x\) is conjugate to \(\left(
\begin{array}{cc}
\mu & 0 \\
0 & \mu^{-1}
\end{array}
\right)\) in \(\SL{2^{2a}}\), so our aim is in fact to find \(\mu\) such that the matrix \[\left(
\begin{array}{cc}
\mu & 0 \\
0 & \mu^{-1}
\end{array}
\right)\otimes\left(
\begin{array}{cc}
\mu^{2^c} & 0 \\
0 & \mu^{-2^c}
\end{array}
\right)\otimes\left(
\begin{array}{cc}
\mu^{2^{2c}} & 0 \\
0 & \mu^{-2^{2c}}
\end{array}
\right)\] has a non-zero eigenspace for the eigenvalue \(1\). A direct calculation  shows that it is enough to choose \(\mu\) of order \(2^{2c}-2^c+1\).

\smallskip
{{\bf{(e)}}} The subgroups \(I_K(\xi)/L\) of \(K/L\), for \(\xi\) non-principal in \(\irr{L/L'}\), cannot be all of type~(ii) of even order or of type (i\(_-\)) (both types occurring).

\noindent If, assuming the contrary, there exists a non-principal \(\xi_0\in\irr{L/L'}\) such that \(I_K(\xi_0)/L\) is of type~(ii) and of even order, but not containing a full Sylow \(2\)-subgroup of \(K/L\) then, as in (d), we get the following conditions: there exists a prime \(q\in\pi(2^a-1)\) such that, for every non-principal \(\xi\in\irr{L/L'}\), the subgroup \(I_K(\xi)/L\) contains a Sylow \(q\)-subgroup of \(K/L\), and \(\xi_0\) does not extend to \(I_K(\xi_0)\).
Similarly,  no power of $\xi_0$ can extend to its inertia subgroup $I$  in $K$, if $I/L$  it does not contain a full
Sylow $2$-subgroup of $K/L$. Since all  Sylow $q$-subgroups of $K/L$ are cyclic for $q \neq 2$, by~\cite[Theorem~6.26]{Is}
we can actually assume that the order $o(\xi_0)$ in the dual group of $L/L'$ is a power of $2$.
Hence 
\(\xi_0\) is in \(\irr{L/X}\),  for a chief factor \(L/X\) of \(K\) that is a \(2\)-group, and the rest of the argument in (d) goes through. 

On the other hand, if all the inertia subgroups of type (ii) and of even order contain a full Sylow \(2\)-subgroup of \(K/L\), and there is an inertia subgroup of type (i$_-$) whose index in \(K/L\) is divisible by a prime in \(\pi(2^a-1)\) (that is necessarily \(p\)), then again we are in the same situation as in (d): for every \(\xi\in\irr{L/L'}\), the subgroup \(I_K(\xi)/L\) contains a Sylow \(q\)-subgroup of \(K/L\) for a suitable prime \(q\in\pi(2^a-1)-\{p\}\), and \(L/L'\) has even order. Taking a chief factor \(L/X\) of \(K\) that is a \(2\)-group, we are in a position to apply Theorem~\ref{TipoIeII}  together with Lemma~\ref{SL2Nq}, and we get a contradiction. Finally, if all the inertia subgroups of type (ii) and even order contain a Sylow \(2\)-subgroup of \(K/L\), and all those of type (i$_-$) have order divisible by \(2^a-1\), then Theorem~\ref{TipoIeIIPieni} (applied to the action of \(K/L\) on any chief factor \(V=L/X\) of \(K\) of odd order) yields that \(L/L'\) is a \(2\)-group, and now Theorem~\ref{TipoIeII} yields a contradiction.

\smallskip
{{\bf{(f)}}} The subgroups \(I_K(\xi)/L\) of \(K/L\), for \(\xi\) non-principal in \(\irr{L/L'}\), cannot be all of type~(ii) of even order or of type (i\(_+\)) (both types occurring). 

\noindent If, assuming the contrary, there exists a non-principal \(\xi_0\in\irr{L/L'}\) such that \(I_K(\xi_0)/L\) is of type~(ii) but not containing a full Sylow \(2\)-subgroup of \(K/L\), then \(2\) is adjacent in \(\Delta(G)\) to every prime in \(\pi(K/L)-\{2\}\); however, there also exists a prime \(r\in\pi(2^a-1)\) such that \(r\) divides \(|K:I_K(\xi_0)|\), and this \(r\) is adjacent in \(\Delta(G)\) to every other prime in \(\pi(G/R)\); this is incompatible with the existence of a cut-vertex of \(\Delta(G)\). The case when all the inertia subgroups of type~(ii) contain a full Sylow \(2\)-subgroup of \(K/L\), and there is an inertia subgroup of type (i$_+$) whose index in \(K/L\) is divisible by a prime in \(\pi(2^a+1)\) (which must be \(p\)), yields the following situation: every inertia subgroup of type (i$_+$) contains a Sylow \(q\)-subgroup of \(K/L\) for a suitable prime \(q\in\pi(2^a+1)-\{p\}\), and every inertia subgroup of type (ii) is a full normalizer of a Sylow \(2\)-subgroup of \(K/L\). Moreover, \(|L/L'|\)  is even, and again we reach a contradiction via Theorem~\ref{TipoIeII}. Finally, if all the inertia subgroups of type (ii) contain a Sylow \(2\)-subgroup of \(K/L\), and all those of type (i$_+$) have order divisible by \(2^a+1\), then Theorem~\ref{TipoIeIIPieni} (applied to the action of \(K/L\) on any chief factor \(V=L/X\) of \(K\) of odd order) yields that \(L/L'\) is a \(2\)-group, and again Theorem~\ref{TipoIeII} yields a contradiction.

\smallskip
{{\bf{(g)}}} The subgroups \(I_K(\xi)/L\) of \(K/L\), for \(\xi\) non-principal in \(\irr{L/L'}\), cannot be all of type~(ii) with even order, of type (i\(_+\)), or of type (i\(_-\)) (all types occurring). 

\noindent Let us assume the contrary. Then \(2\) is adjacent in \(\Delta(G)\) to all the primes in \(\pi(K/L)-\{2\}\), and any inertia subgroup \(I_K(\xi_0)/L\) of type (ii) is the full normalizer of a Sylow \(2\)-subgroup \(T_0/L\) of \(K/L\), i.e. a Frobenius group of order \(2^a\cdot (2^a-1)\).

Note that \(T_0/L\) is then a minimal normal subgroup of \(I_K(\xi_0)/L\). Moreover, \(\xi_0\) does not extend to \(I_K(\xi_0)\), as otherwise we would get adjacencies between primes in \(\pi(2^a-1)\) and primes in \(\pi(2^a+1)\); hence \(L/L'\) has even order and (as already observed) for  a chief factor \(L/X\) of \(K\) having \(2\)-power order, there exists a character in \(\irr{L/X}\) whose stabilizer in \(K/L\) contains a Sylow \(2\)-subgroup of \(K/L\). In other words, we can assume that \(\xi_0\) lies in \(\irr{L/X}\), so \(|L/\ker{\xi_0}|=2\). Now, \(T_0/\ker{\xi_0}\) is a non-abelian \(2\)-group, thus \(L/\ker{\xi_0}\) is the derived subgroup of \(T_0/\ker{\xi_0}\). Moreover, the normal subgroup \(Z/\ker{\xi_0}=\zent{T_0/\ker{\xi_0}}\) of \(I_K(\xi_0)/\ker{\xi_0}\) cannot be larger than \(L/\ker{\xi_0}\), because \(Z/L\) is not the whole \(T_0/L\). We deduce that \(T_0/\ker{\xi_0}\) is an extraspecial \(2\)-group, so (\(a\) is even and) an application of \cite[II, Satz~9.23]{Hu} yields the contradiction that \(2^a-1\) divides \(2^{a/2}+1\).

\smallskip
{{\bf{(h)}}} The subgroups \(I_K(\xi)/L\) of \(K/L\), for \(\xi\) non-principal in \(\irr{L/L'}\), cannot be all of type (i\(_+\)) or of type (i\(_-\)) (both types occurring). 

\noindent Assuming the contrary, as in the previous case we see that \(2\) is adjacent in \(\Delta(G)\) to all the primes in \(\pi(K/L)-\{2\}\); moreover, all the inertia subgroups are forced to contain either a subgroup of order \(2^a-1\) or a subgroup of order \(2^a+1\). Now, let \(L/X\) be a chief factor of \(K\); by Lemma~\ref{SL2Nq}, both types (i$_+$) and (i$_-$) occur for the inertia subgroups even if we only consider the characters in \(\irr{L/X}\), but then Theorem~\ref{TipoIeIIPieni} yields that \(L/X\) is a \(2\)-group, which is impossible because no non-trivial element of \(\widehat{L/X}\) is centralized by a Sylow \(2\)-subgroup of \(K/L\).

\smallskip
{{\bf{(i)}}} The subgroups \(I_K(\xi)/L\) of \(K/L\), for \(\xi\) non-principal in \(\irr{L/L'}\), cannot be all of type (i\(_+\)).

\noindent Let us assume the contrary, and let \(L/X\) be a chief factor of \(K\). If there exists \(\xi_0\in\irr{L/X}\) such that \(I_K(\xi_0)/L\) does not contain a subgroup of order \(2^a+1\), which means that there exists \(r\in\pi(2^a+1\)) dividing \(|K:I_K(\xi_0)|\), then \(r\) is a complete vertex of \(\Delta(G)\) and it is in fact \(p\). Now \(\pi(2^a+1)-\{p\}\) is forced to contain at least one prime \(q\), and this \(q\) cannot show up in the index of any inertia subgroup \(I_K(\xi)\) in \(K\). In other words, for every non-principal \(\xi\) in \(\irr{L/X}\), the inertia subgroup \(I_K(\xi)/L\) contains a Sylow \(q\)-subgroup of \(K/L\) (as a normal subgroup).

Of course the conclusion of the previous paragraph holds if \(I_K(\xi)/L\) does contain a subgroup of order \(2^a+1\) for every \(\xi\in\irr{L/X}\). Thus, in any case, Lemma~\ref{SL2Nq} applies and we get a contradiction. 

\smallskip
{{\bf{(j)}}} The subgroups \(I_K(\xi)/L\) of \(K/L\), for \(\xi\) non-principal in \(\irr{L/L'}\), cannot be all of type (i\(_-\)).

\noindent This is totally analogous to (i).

\smallskip
As we saw, the only possibility that is left is the desired one: \(I_K(\xi)/L\) contains a unique Sylow \(2\)-subgroup of \(K/L\) for every non-principal \(\xi\) in \(\irr{L/L'}\). The proof is complete.
\end{proof}

\bigskip

Next, we conclude the proof of Theorem~\ref{MainAboutK} addressing the remaining case, i.e. when $a=2$. We start by introducing some notation and a few facts concerning a relevant set of modules.

\smallskip
\begin{itemize}
\item We denote by $V_0$ the natural module for $S = \SL{4}$. We have $|V_0| = 2^4$, $|\cent Sv| = 2^2$ for all non-trivial $v \in V_0$, and the cohomology group  ${\rm H}^2(S,V_0)$ is trivial (whereas ${\rm H}^1(S,V_0) \neq  0$).
  
\item We denote by $V_1$ the restriction to $S = \SL{4}$, embedded as \(\Omega_4^-(2)\) into \({\rm SL}_4(2)\), of the standard module of \({\rm SL}_4(2)\).
We have $|V_1| = 2^4$; moreover, $S$ has two orbits $O_1$ and $O_2$ on $V_1-\{0\}$, and
$\cent Sv \cong S_3$ for $v \in O_1$, while  $\cent Sv \cong A_4$ for $v \in O_2$. As for the relevant cohomology groups, we have ${\rm H}^1(S,V_1) = 0 = {\rm H}^2(S,V_1)$.

\item We denote by $W$ the restriction to $S_1 = \SL{5}$, seen as a subgroup of $\rm{SL}_4(3)$, of the standard module of $\rm{SL}_4(3)$. We have $|W| = 3^4$ and $|\cent {S_1}v| = 3$ for all non-trivial $v \in W$; moreover, ${\rm H}^2(S_1,W) = 0$ .

\item We denote by $U$ the natural module for $S_1 = \SL{5}$. We have $|U| = 5^2$ and $|\cent {S_1}v| = 5$ for all non-trivial $v \in U$; moreover, ${\rm H}^2(S_1,U) = 0$. 
\end{itemize}

\medskip
Note that all the above modules are self-dual: this follows from~\cite[Lemma 3.10]{PR} for $V_0$,  $V_1$ and $U$,  and for $W$ by observing that
$\rm{GL}_4(3)$ has  a unique conjugacy class of subgroups isomorphic to $\SL{5}$.

Finally,
let $B$ be an abelian group and $A$ a group acting on $B$ via automorphisms: we will denote by $\Delta_{orb}(B)$ the graph
whose vertex set is the set of the prime divisors of the set of  orbit sizes $\{ |A:\cent Ab|: b \in B\}$ of the action of $A$ on~$B$, and such that two (distinct) vertices $p$ and $q$ are adjacent if and only if there exists a $b \in B$ such that
the product $pq$ divides $|A:\cent Ab|$.
\begin{lemma}\label{LA5}
  Let $q$ be a prime number and $V$ an elementary abelian $q$-group.
  \begin{enumeratei}
\item If $V$ is a non-trivial irreducible $\SL{4}$-module and 
   the graph $\Delta_{orb}(V)$ is not a clique with vertex set $\{2,3,5\}$, then $q=2$ and  $V$ is isomorphic either to $V_0$ or to $V_1$;
\item If $V$ is a faithful irreducible $\SL{5}$-module and 
 the graph $\Delta_{orb}(V)$ is not a clique with vertex set $\{2,3,5\}$, then either $q=3$ and  $V$ is isomorphic to $W$ or  $q=5$ and $V$ is isomorphic to $U$.   
 \end{enumeratei}
\end{lemma}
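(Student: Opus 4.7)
My plan is a case analysis over the irreducible $\F_q[G]$-modules $V$ in each characteristic $q$, where $G=\SL 4\cong A_5$ for part (a) and $G=\SL 5\cong 2.A_5$ for part (b). Since $|\SL 4|=60$ and $|\SL 5|=120$ both have prime set $\{2,3,5\}$, the vertex set of $\Delta_{orb}(V)$ is automatically contained in $\{2,3,5\}$; the hypothesis of the lemma is therefore that either some prime in $\{2,3,5\}$ is absent from every orbit size, or some pair of primes from $\{2,3,5\}$ divides no orbit size. The overall strategy is to either exhibit a regular $G$-orbit on $V$ (which forces $\Delta_{orb}(V)$ to be the full clique on $\{2,3,5\}$, since then $|G|$ itself is an orbit size divisible by $2\cdot 3\cdot 5$) or to compute the full orbit structure and identify $V$ among the modules listed in (a), (b).

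For $q$ coprime to $|G|$, the fixed-point subspaces of non-trivial elements are proper $\F_q$-subspaces of $V$ whose dimensions are bounded through character values; a direct count shows $|V|>\sum_{1\neq g\in G}|\cent V g|$ for every faithful irreducible, so $G$ admits a regular orbit on $V$ and $\Delta_{orb}(V)$ is the full clique on $\{2,3,5\}$. This disposes of every prime $q\notin\{2,3,5\}$. It remains to handle the defining characteristics, and here Brauer theory limits the absolutely irreducible $\bar{\F}_q[G]$-modules to the number of $q$-regular classes of $G$ (at most $5$ for $A_5$, at most $7$ for $\SL 5$); each is then descended to the prime field $\F_q$ by Galois fusion of non-$\F_q$-rational characters.

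For part (a): in characteristic $2$ the only non-trivial irreducibles of $\F_2[A_5]$ are $V_0$ (the natural module, with a single non-zero orbit of size $15$) and $V_1$ (with non-zero orbits of sizes $5$ and $10$), both failing the clique test and matching exactly the two exceptional modules in the statement. In characteristic $3$ the non-trivial irreducibles are the $4$-dimensional deleted permutation module and the $6$-dimensional fused module, and a direct orbit analysis produces, in each case, an orbit of size divisible by $30$, forcing $\Delta_{orb}(V)$ to be the full clique on $\{2,3,5\}$. In characteristic $5$ the $3$- and $5$-dimensional irreducibles similarly admit orbits supplying all three edges $\{2,3\},\{2,5\},\{3,5\}$. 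For part (b): in characteristic $2$ the center $\{\pm I\}$ acts trivially, so there are no faithful irreducibles; in characteristic $3$ the two faithful $\F_3[\SL 5]$-irreducibles both have order $3^4$ and comprise $W$ (with every non-zero orbit of size $40=2^3\cdot 5$, missing vertex $3$) and a companion irreducible that contains an orbit of size divisible by $30$; in characteristic $5$ the natural module $U$ has the single non-zero orbit of size $24=2^3\cdot 3$ (missing vertex $5$), and the remaining faithful $\F_5$-irreducible (of dimension $4$) admits a regular orbit.

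The main obstacle I expect is the handful of defining-characteristic modules where the naive regular-orbit size count $|V|>\sum_{1\neq g\in G}|\cent V g|$ is not immediately decisive. For each such module I would compute the Brauer character values of elements of orders $2$, $3$ and $5$ to read off $\dim_{\F_q}\cent V g$ (using Jordan block data for non-$q$-regular elements when needed), then produce explicit vectors realizing orbit sizes divisible by each of the pairs $6,10,15$ separately --- enough to force $\Delta_{orb}(V)=K_{\{2,3,5\}}$. A GAP-assisted verification of the few remaining modules completes the case analysis.
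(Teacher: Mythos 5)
Your overall strategy coincides with the paper's: the paper's proof is exactly two sentences, citing \cite[Theorem~2.3]{KP} for the existence of regular orbits in characteristic $\geq 7$ and settling characteristics $2,3,5$ by direct GAP computation, which is what your case analysis amounts to once you fall back on GAP for the defining-characteristic modules. Two details in your write-up need repair, though neither affects the method. First, the literal inequality $|V|>\sum_{1\neq g\in G}|\cent Vg|$ fails for the $3$-dimensional module of $\SL 4\cong A_5$ over $\F_7$ (there $\sum_{1\neq g}|\cent Vg|=59\cdot 7=413>343=|V|$, since every non-identity element has a one-dimensional fixed space); you need the standard refinement that sums $|\cent VP|-1$ over the $31$ subgroups $P$ of prime order, which gives $31\cdot 6=186<342$ and does yield a regular orbit. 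Second, your enumeration of the faithful irreducible $\F_3[\SL 5]$-modules is wrong: besides $W$ (the two Galois-conjugate $2$-dimensional $\F_9$-modules fused into a single $4$-dimensional $\F_3$-module) there is the degree-$6$ defect-zero module of order $3^6$, so the two faithful $\F_3$-irreducibles do not ``both have order $3^4$''; the correct statement is that the $6$-dimensional module admits orbits realizing the full clique while $W$, with both nonzero orbits of size $40$, is the unique exception. With these corrections your argument reproduces the paper's proof in expanded form.
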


\begin{proof}
  By Theorem 2.3 of~\cite{KP}, both  $\SL{4}$ and $\SL{5}$ always have regular orbits on a faithful module of characteristic  $p \geq 7$.
  The remaining cases, of characteristic $p \in \{2,3,5\}$, can be settled by direct computation using GAP \cite{GAP}. 
\end{proof}

\begin{theorem}\label{PropA5}

Assume that the group $G$ has a composition factor isomorphic to $\SL{4}\cong\PSL 5$, and let $p$ be a prime number. Assume also that \(\Delta(G)\) is connected and that it has a cut-vertex \(p\). Then, denoting by $K$ the solvable residual of $G$, one of the following conclusions holds.
\begin{enumeratei} 
\item \(K\) is isomorphic to \(\SL{4}\) or to \(\SL{5}\).
\item \(K\) contains a minimal normal subgroup \(L\) of \(G\) such that \(K/L\) is isomorphic either to \(\SL{4}\) or to \(\SL{5}\) and \(L\) is the natural module for \(K/L\).
\item \(K\) contains a minimal normal subgroup \(L\) of \(G\) such that \(K/L\) is isomorphic to \(\SL{4}\). Moreover, \(L\) is isomorphic to the restriction to \(K/L\), embedded as \(\Omega_4^-(2)\) into \({\rm SL}_4(2)\), of the standard module of \({\rm SL}_4(2)\).
\item \(K\) contains a minimal normal subgroup \(L\) of \(G\) such that \(K/L\) is isomorphic to \(\SL{5}\). Moreover, \(L\) is isomorphic to the restriction to \(K/L\), embedded in \({\rm SL}_4(3)\), of the standard module of \({\rm SL}_4(3)\).
\end{enumeratei}
\end{theorem}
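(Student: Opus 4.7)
The plan is to exploit the isomorphism $\SL 4 \cong \PSL 5$, which allows us to apply Lemma \ref{InfiniteCommutator} with $t=5$ (so that $t^a=5>4$ and $t^a\neq 9$, placing us in its hypothesis). This immediately splits the argument into two alternatives: either $K \cong \PSL 5 \cong \SL 4$ or $K \cong \SL 5$ (which is exactly conclusion (a) of the statement), or $K$ admits a non-trivial normal subgroup $L_0$ with $K/L_0 \in \{\SL 4,\SL 5\}$ such that every non-principal character of $L_0/L_0'$ fails to be $K$-invariant. In the second alternative, the goal is (1) to show that $L_0$ may be taken to be a minimal normal subgroup $L$ of $G$ (in particular, $L_0'=1$), and (2) to identify the $\F_q[K/L]$-module $L$ as one of the four modules $V_0$, $V_1$, $U$, $W$ introduced before Lemma \ref{LA5}.

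The cleaner half is step (2). Once $L$ is a minimal normal subgroup of $G$ lying inside $K$ with $K/L\in\{\SL 4, \SL 5\}$, it is elementary abelian of order $q^m$ for some prime $q$, and irreducible as a $K/L$-module. Lemma \ref{LA5} then says that either $L$ is one of $V_0, V_1$ (when $K/L\cong\SL 4$) or $U, W$ (when $K/L\cong\SL 5$), producing conclusions (b), (c), (d); or else the orbit graph $\Delta_{orb}(L)$ is the clique on $\{2,3,5\}$. In this latter case we use Clifford correspondence applied to irreducible characters of $L$ lying in orbits realizing each pair of primes, combined with Gallagher's theorem to lift these degrees from $I_K(\xi)$ to $K$. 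Together with Theorem \ref{PSL2bis} (describing the three components $\{2\}$, $\{3\}$, $\{5\}$ of $\Delta(K/L)$) and Theorem \ref{MoretoTiep} (which makes every prime in $\pi(G/R)\setminus\pi(K/L)$ adjacent to all of $\{3,5\}$ in $\Delta(G)$), the assumption $\V G = \pi(G/R)\cup\{p\}$ forces $\Delta(G)$ to contain the clique on $\pi(G/R)\setminus\{2\}$ together with enough adjacencies at $2$ to prevent any single vertex from being a cut-vertex. This contradiction closes step (2).

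Step (1) is the technical heart of the proof, and its plan mirrors the three-subcase chief-factor analysis carried out in Theorem \ref{Char2} (cases (i), (ii), (iii)): assuming $L_0'\neq 1$, pick a $K$-chief factor $L_0'/Z$ and split according to whether $L_0'/Z$ is central in $L_0/Z$, and further whether it is central in $K/Z$. The key simplification compared to the $a>2$ case is that the proper subgroups of $\SL 4\cong A_5$ reduce to $A_4$, $D_{10}$, $S_3$ and subgroups thereof, and similarly $\SL 5$ has a short subgroup list, so the classification of possible inertia subgroups $I_K(\lambda)/L_0$ is very constrained. For each possibility we translate the constraint ``only primes in $\pi(|K:I_K(\lambda)|)$ can arise as factors of character degrees over $\lambda$'' into an adjacency pattern in $\Delta(G)$, and compare it with the hypothesis that a single cut-vertex $p$ exists. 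The vanishing of $H^2(K/L_0',L_0'/Z)$ obtained via Cossey's result (as in case (i) of Theorem \ref{Char2}) again reduces splitting questions, and the non-central non-central case reduces $L_0'/Z$ to a $2$-group as before.

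The main obstacle is step (1). Although the template of Theorem \ref{Char2} adapts well, the non-trivial Schur multiplier of $\SL 4$ (of order $2$, realized precisely by $\SL 5$) requires careful handling of the extension-of-character steps: when $K/L\cong\SL 4$ and the relevant central chief factor has $2$-power order, we may have to replace $K$ by a cover. In addition, the exceptional types (i), (ii) of Theorem \ref{TipoIeIIPieni} and Lemma \ref{SL2Nq}, corresponding to the actions of $\SL 5$ on the $4$-dimensional module $W$ over $\F_3$ and of $\SL 4$ on $V_1$ over $\F_2$, genuinely occur here and must be carried through the analysis rather than excluded: these are precisely the configurations surviving in conclusions (c) and (d), so step (1) must be phrased so as to terminate at these modules after confirming $L_0'=1$ (via the vanishing of the relevant $H^2$ recorded just before Lemma \ref{LA5}).
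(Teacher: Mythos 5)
Your opening move (Lemma~\ref{InfiniteCommutator} with $t^a=5$) and your treatment of the case where $L$ is already a minimal normal subgroup (identification of the module via Lemma~\ref{LA5}, using that the orbit graph of the relevant chief factor cannot be a clique on $\{2,3,5\}$) coincide with the paper. The gap is in what you call step (1), the reduction to a minimal normal subgroup. You propose to transplant the trichotomy of cases (i)--(iii) from Theorem~\ref{Char2}, but that analysis is built on features specific to $a\geq 3$: it is carried out only after one knows that $L/L'$ is the \emph{natural} module (so that $L/L'$ is a $2$-group, the characters of $L/Z$ over a central $\lambda$ have even degree, $\norm{K/L}{T/L}$ is a Frobenius group of order $2^a(2^a-1)$, primitive prime divisors of $2^a-1$ are available, etc.), and in case (iii) it uses the triviality of the Schur multiplier of $\SL{2^a}$ to extend the fully ramified character to $K$ and invoke Gallagher. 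For $a=2$ none of this survives: the top chief factor $L/X$ may be any of $V_0$, $V_1$, $W$, $U$ (so of characteristic $2$, $3$ or $5$), and the Schur multiplier of $A_5$ has order $2$. You flag the multiplier problem and say one may have to replace $K$ by a cover, but this does not by itself produce a contradiction: when the cocycle is non-trivial, the characters of $K$ lying over the fully ramified character correspond to the faithful characters of $\SL 5$, of degrees $2,2,4,6$, and no clique on $\{2,3,5\}$ results. The paper instead disposes of the central $2$-group configuration by a GAP computation on the perfect groups of orders $2^5\cdot|\SL 4|$ and $2^8\cdot|\SL 4|$, for which your proposal offers no substitute.

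Moreover, your trichotomy misses an entire configuration that the paper must (and does) handle: a second chief factor $X/Y$ whose order is coprime to $|L/X|$, i.e.\ $L/Y$ non-nilpotent (for instance $L/X\cong W$ a $3$-group with $X/Y$ a $2$- or $5$-group). Here the paper first proves that $X/Y$ is the unique minimal normal subgroup of $K/Y$ (ruling out a second copy of $V_0,V_1,W,U$ by a regular-orbit computation), then obtains a complement to $X/Y$ from $\frat{K/Y}=1$, and derives the clique contradiction by estimating $|I_{L_0/Y}(\mu)|>|L_0/Y|^{1/2}$ for a carefully chosen $\mu$ and showing that no Sylow $r$-subgroup of the complement can normalize this inertia subgroup. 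None of this is suggested by, or recoverable from, the Theorem~\ref{Char2} template you invoke, so as it stands your step (1) does not go through.
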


\begin{proof}
  By Lemma~\ref{InfiniteCommutator}  (applied with \(t^a=5\)) either (a) holds, or \(K\) has a non-trivial normal subgroup \(L\) such that \(K/L\) is isomorphic to \(\SL{4}\) or to \(\SL{5}\) and every non-principal irreducible character of \(L/L'\) is not invariant in \(K\). In the latter case, consider a chief factor $L/X$ of \(K\) and set \(V\) to be its dual group; then, taking into account that \(\V G=\V {K}\cup\{p\}\), the hypothesis of \(p\) being a cut-vertex for \(\Delta(G)\) implies that
 the subgraph of $\Delta(G)$ induced by the set of vertices $\{2,3,5\}$ is not a clique. Moreover, \(V\) is a non-trivial irreducible module for \(K/L\), and Clifford's theory yields  that \(\Delta_{orb}(V)\) is not a clique as well. Therefore Lemma~\ref{LA5} applies, and the \(K/L\)-module \(V\) is isomorphic to \(V_0\) or to \(V_1\) if \(K/L\cong\SL 4\) whereas it is isomorphic to \(W\) or to \(U\) if \(K/L\cong\SL 5\). Note that $L/X = \fit {K/X}$ is a chief factor of \(G\) as well, and our proof is complete if \(X=1\).

Working by contradiction, we assume \(X\neq 1\) and we consider a chief factor \(X/Y\) of \(K\): in this situation, we first show that \(X/Y\) is the unique minimal normal subgroup of \(K/Y\). In fact, let \(M/Y\) be another minimal normal subgroup of \(K/Y\). Setting \(N/L=\zent{K/L}\) (and observing that \(N\) is contained in the solvable radical \(R\) of \(G\)), we have that \(K/N\) is the unique non-solvable chief factor of \(K\); so, if \(M/Y\) is non-solvable, then we get \(M/Y\cong K/N\) and hence \(K/Y=M/Y\times N/Y\), contradicting the fact that \(K\) is perfect. Therefore \(M/Y\) is abelian, so the normal subgroup \(MX/X\) of \(K/X\) lies in \(\fit{K/X}=L/X\), and we conclude that \(M/Y\) is contained in \(L/Y\). As a consequence, the \(K/L\)-module \(M/Y\) is isomorphic to \(L/X\), i.e. to one of the \(K/L\)-modules \(V_0\), \(V_1\), \(W\) and \(U\). Now \(L/Y\cong M/Y\times X/Y\) can be regarded as a \(K/L\)-module which is the direct sum of two modules in \(\{V_0,V_1\}\) or two modules in \(\{W,U\}\) (depending on whether \(K/L\cong\SL{4}\) or \(K/L\cong\SL{5}\), respectively); but it is easy to see that \(K/L\) has regular orbits on (the duals of) such modules, and this leads via Clifford's theory to the contradiction that \(\{2,3,5\}\) is a clique of \(\Delta(G)\).
  
Next, suppose that $L/Y$ is nilpotent. Since \(K/Y\) has a unique minimal normal subgroup, clearly \(L/Y\) must be a group of prime-power order and, since \(|L/X|\) is a \(q\)-power for \(q\in\{2,3,5\}\), the same holds for \(|L/Y|\). Furthermore, we have $X/Y \leq \zent{L/Y}$ and, in particular, \(I_{K}(\lambda)\subseteq I_{K}(\mu)\) for every \(\mu\in\irr{X/Y}\) and \(\lambda\in\irr{L/Y\,|\,\mu}\).

  If $q \neq 2$, then $|N/L|=2$ and we claim that $X/Y$ is a non-trivial $K/L$-module.
 In fact, assuming the contrary, we get $X/Y \subseteq \zent{K/Y}$ and $|X/Y| = q$. Observe that \(\cent{L/Y}{N/L}\) is a normal subgroup of \(K/Y\) which contains \(X/Y\) but is not the whole \(L/Y\), so, as \(L/X\) is a chief factor of \(K\), we have \(\cent{L/Y}{N/L}=X/Y\). Now if $L$ is abelian, then   
 by coprime action we get  $L = X/Y \times [L/Y,N/L]$, contradicting the uniqueness of $X/Y$ as a minimal normal subgroup of $K/Y$.
 On the other hand, if $L$ is non-abelian, then $X/Y = (L/Y)' = \zent{L/Y}$ and $L/Y$ is an extraspecial $q$-group.
 So, every non-linear irreducible character of $L/Y$ is $K$-invariant and, since $K/L$ has cyclic Sylow $q$-subgroups, it extends to $K$. It
 easily follows that  $\{2,3,5\}$ is a clique of $\Delta(G)$, a contradiction. 
 Thus the claim is proved, and Lemma~\ref{LA5} applies: our assumption that $q$ is not \(2\) yields then $X/Y \cong L/X \cong U$,
 or $X/Y \cong L/X \cong W$, as $K/L$-modules.
 By the fact that $\{ 2,3,5\}$ cannot be a clique and by the observation in the last sentence of the previous paragraph, it follows that
  $I_{K/L}(\lambda)$ is a Sylow $q$-subgroup of $K/L$ for every $\lambda \in \irr{L/Y}$, a contradiction by the paragraph preceding Lemma~\ref{SL2Nq}. 

  So we can assume $q=2$ and $L = N$. 
  One can check with GAP \cite{GAP}
  that the perfect groups of order $2^5\cdot |\SL{4}|$ always have irreducible characters whose degrees are
  multiple, respectively, of $6$, $10$ and $15$: it follows that $X/Y$ is not the trivial $K/L$-module. 
  Hence by Clifford's theory, together with the fact that \(I_{K}(\lambda)\subseteq I_{K}(\mu)\) for every \(\mu\in\irr{X/Y}\) and \(\lambda\in\irr{L/Y\,|\,\mu}\), the assumptions of Lemma~\ref{LA5} are satisfied for the action of \(K/L\) on \(X/Y\). As a result, $X/Y$ is isomorphic either to $V_0$ or to $V_1$ as a $K/L$-module and,
in particular, we get $|X/Y| = 2^4 = |L/X|$. But again, a direct check via GAP \cite{GAP}shows that the perfect groups of order \(2^8\cdot|\SL{4}|\) all have irreducible characters whose degrees are multiples of \(6\), \(10\), \(15\), yielding the same contradiction as above.

  Finally, we assume that $L/Y$ is non-nilpotent. Thus we have $X/Y= \fit{L/Y} = \fit{K/Y}$, and $|X/Y|$ is coprime to $|L/X|$.
  Observe that $\frat{K/Y} \leq \fit{K/Y} = X/Y$ and that $\frat{K/Y} \neq X/Y$, because otherwise \(K/Y\) modulo its Frattini subgroup would be isomorphic to \(K/X\) and would have a trivial Fitting subgroup, not our case.
  Since $X/Y$ is a minimal normal subgroup of $K/Y$, we deduce that $\frat{K/Y}$ is trivial and hence $X/Y$ has a complement $K_0/Y$ in $K/Y$; in particular, every $\mu \in \irr {X/Y}$ extends to its inertia subgroup $I_K(\mu)$. 
  Let $Z/Y$ be an irreducible $L/Y$-submodule of $X/Y$ (i.e. a minimal normal subgroup of \(L/Y\) contained in \(X/Y\)). Set \(C/Y=\cent{L/Y}{Z/Y}\): as $L/X$ is an elementary abelian \(q\)-group (where \(q\) is a suitable prime in \(\{2,3,5\}\)), the factor group $L/C$ is a cyclic group
  of order $q$ acting fixed-point freely on $Z/Y$. Writing the completely reducible  $L/Y$-module  $X/Y$ as
  $(Z/Y) \times (Z_1/Y)$ for a suitable \(L/Y\)-module \(Z_1/Y\), we consider the character $\mu = \mu_0 \times 1_{Z_1/Y} \in \irr{X/Y}$, where  $\mu_0$ is a non-principal
  irreducible character of $Z/Y$. We observe that $I_{L/Y}(\mu) = C/Y$ and that every $\chi \in \irr{K/Y|\mu}$ has
  a degree divisible by $q$. We also remark that, setting $L_0/Y = (L/Y) \cap (K_0/Y)$,
 if $L_0/Y \cong L/X$ is isomorphic (as a $K/L$-module) either to $V_0$, $V_1$ or $W$, then $|I_{L_0/Y}(\mu)| =|C/X| =|L_0/Y|/q> |L_0/Y|^{1/2}$. 
We claim that, as a consequence, for every prime divisor $r \neq q$ of $|K/L|$, either $r$ divides $|K:I_K(\mu)|$ or $r$ divides the degree of some irreducible character of $I_{K/X}(\mu)$ that lies over $\mu$. In fact, fixing \(R_0/Y\in\syl r{K_0/Y}\), it is not difficult to see that  there exists another Sylow \(r\)-subgroup \(R_1/Y\) of \(K_0/Y\) with \(\langle R_0L_0/L_0, R_1L_0/L_0\rangle=K_0/L_0\) and, since no non-trivial element of \(L_0/Y\) is centralized by the whole \(K_0/L_0\), the dimension over \(\F_{q}\) of the vector space \(\cent{L_0/Y}{R_0L_0/L_0}\) cannot be larger than a half of \(\dim_{\F_q}(L_0/Y)\). 
Now, if $I_{K_0/Y}(\mu)$ (which is isomorphic to $I_{K/X}(\mu)$) contains a Sylow $r$-subgroup $R_0/Y$ of $K_0/Y$ as a normal subgroup, then $R_0/Y$ centralizes $I_{L_0/Y}(\mu)$ because $I_{L_0/Y}(\mu)$ and $R_0/Y$ are normal subgroups of coprime order of $I_{K_0/Y}(\mu)$, and this is not possible as $|I_{L_0/Y}(\mu)| >|L_0/Y|^{1/2}$. 
By Gallagher's theorem, it  hence follows that  $\{2,3,5\}$ is a clique of $\Delta(G)$, a contradiction.

 It only remains the case $L_0/Y \cong U$ (as $K_0/L_0$-module); but in this case $q=5$ divides $\chi(1)$ for every
 $\chi \in \irr{K\,|\, \mu}$, and the Sylow $2$-subgroups and $3$-subgroups of $K_0/L_0$ act fixed point freely on $L_0/Y$.
 Recalling that $I_{L_0/Y}(\mu)$ is normal in $I_{K_0/Y}(\mu)$ and that $|I_{L_0/Y}(\mu)| = 5$,  we hence see that $6$ divides $[K_0:I_{K_0}(\mu)]$, and 
again $\{2,3,5\}$ is a clique of $\Delta(G)$, a contradiction. 
\end{proof}

\section{Proof of Theorem~1}

We are ready to prove Theorem~1, that was stated in the Introduction and that is stated again here, for the convenience of the reader, as Theorem~\ref{main1}.
%
%
%
%
\begin{lemma}\label{star}
  Let $K$ be a normal subgroup of the group $G$ with $K \cong \SL{2^a}$, $a \geq 2$.
  Let $R$ be the solvable radical of $G$ and assume that $\V G =  \pi(G/R) \cup \{ p \}$ for a suitable prime $p$.
  Then
  \begin{enumeratei}
  \item The primes in $\V R$ (if any) are complete vertices of $\Delta(G)$.
  \item If $a \geq 3$ and $2 \in \pi(G/KR)$, then $2$ is a complete vertex of $\Delta(G)$. 
  \item If $2 \not \in \pi(G/KR) \cup \V R$, then $2$ is adjacent in $\Delta(G)$ to a vertex $q$ if and only if
    $q \in \V{G/K}$.    
  \end{enumeratei}
\end{lemma}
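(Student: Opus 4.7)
Since $K\cong\SL{2^a}$ is non-abelian simple, $K\cap R$ is a solvable normal subgroup of $K$ and hence trivial; so $KR=K\times R$. In the setting in which the lemma will be applied, $G/R$ is an almost simple group with socle $\bar K:=KR/R\cong K$, so $\cent G K=R$ and $G/KR$ embeds in $\out K$, which is cyclic of order $a$. From $\cd K=\{1,2^a-1,2^a,2^a+1\}$ we get $\V K=\pi(K)$, and the Steinberg character $\mathrm{St}\in\irr K$ of degree $2^a$ is the unique irreducible character of $K$ of its degree; in particular it is $G$-invariant and, by the classical extendibility of Steinberg to the full automorphism group, it extends to some $\widetilde{\mathrm{St}}\in\irr G$.

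\textbf{Proof of (a).} Fix $r\in\V R$ and $\theta\in\irr R$ with $r\mid\theta(1)$. Given $q\in\V G\setminus\{r\}$, I produce $\chi\in\irr G$ with $qr\mid\chi(1)$. If $q\in\pi(K)$, pick $\mu\in\irr K$ with $q\mid\mu(1)$; by Clifford's theorem, every $\chi\in\irr G$ lying over $\mu\otimes\theta\in\irr{KR}$ satisfies $\mu(1)\theta(1)\mid\chi(1)$, hence $qr\mid\chi(1)$. If $q\in\pi(G/KR)\setminus\pi(K)$, set $T=I_G(\theta)\supseteq KR$, so $[G:T]\mid|G/KR|$; for any $\psi\in\irr{T\mid\theta}$ the induced character $\psi^G\in\irr G$ has degree $[G:T]\psi(1)$, with $r\mid\theta(1)\mid\psi(1)$. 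When $q\mid[G:T]$ we are done; otherwise $q$ must be produced inside $\psi(1)$, using that $T/R$ is almost simple with socle $\bar K$ (so Theorem~\ref{MoretoTiep} and \cite[Theorem~A]{W1} furnish characters of $T/R$ of the required degree), together with the projective character theory of the triple $(T,R,\theta)$. Finally, if $q=p\notin\pi(G/R)$, then for any $\chi\in\irr G$ with $p\mid\chi(1)$ and any $\theta'\in\irr R$ under $\chi$, the divisibility $\chi(1)/\theta'(1)\mid[G:R]$ (see \cite[Theorem~11.29]{Is}) forces $p\mid\theta'(1)$, so $p\in\V R$, and we are reduced to the previous cases applied to both $r$ and $p$. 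The crux, and the main obstacle of the whole lemma, is the subcase $q\in\pi(G/KR)\setminus\pi(K)$ with $q\nmid[G:T]$: securing an irreducible character of $T$ over $\theta$ with degree divisible by $q$ demands a careful analysis of the $2$-cocycle attached to $\theta$ on $T/R$, matching it with the character degrees of $T/R$ provided by the almost simple structure.

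\textbf{Proofs of (b) and (c).} For part~(b), the hypothesis $a\geq 3$ enables the second assertion of Theorem~\ref{MoretoTiep} applied to $G/R$: with $2\in\pi(G/KR)=\pi((G/R)/\bar K)$, the vertex $2$ is adjacent in $\Delta(G/R)$ (hence in $\Delta(G)$) to every other vertex of $\Delta(G/R)$, and \cite[Theorem~A]{W1} yields $\V{G/R}=\pi(G/R)$; any remaining vertex $p\in\V G\setminus\pi(G/R)$ lies in $\V R$ by the argument above, and is complete by part~(a). For part~(c), ``$\Leftarrow$'' is direct: given $q\in\V{G/K}$ and $\beta\in\irr{G/K}$ with $q\mid\beta(1)$, Gallagher's theorem yields $\widetilde{\mathrm{St}}\cdot\beta\in\irr G$ of degree $2^a\beta(1)$, divisible by $2q$. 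For ``$\Rightarrow$'', take $\chi\in\irr G$ with $2q\mid\chi(1)$, a character $\mu\otimes\theta\in\irr{KR}$ lying under $\chi$, and set $I=I_G(\mu\otimes\theta)\supseteq KR$; writing $\chi=\psi^G$ with $\psi\in\irr I$ over $\mu\otimes\theta$ and $\psi(1)=e\mu(1)\theta(1)$, the ramification index $e$ equals $1$ because $I/KR$ is cyclic and so has trivial Schur multiplier (hence $\mu\otimes\theta$ actually extends to $I$). The hypotheses $2\notin\pi(G/KR)\cup\V R$ force $2\nmid[G:I]$ and $2\nmid\theta(1)$, so $2\mid\mu(1)$ and $\mu=\mathrm{St}$. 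Gallagher then expresses $\chi=\widetilde{\mathrm{St}}\cdot\beta$ for some $\beta\in\irr{G/K}$ of degree $\chi(1)/2^a$; since $q$ is odd, $q\mid\beta(1)$, placing $q\in\V{G/K}$.
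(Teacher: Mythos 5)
Your parts (b) and (c) are essentially correct and run along the same lines as the paper: (b) via the second assertion of Theorem~\ref{MoretoTiep} applied to $G/R$ together with part (a), and (c) via the extended Steinberg character and Gallagher's theorem (the paper obtains the evenness of the $KR$-constituent a little more directly, from the fact that $\chi(1)/\psi(1)$ divides the odd number $|G/KR|$, but your argument through the cyclic quotient $I/KR$ is also valid).

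The problem lies in part (a), precisely in the subcase you yourself flag as ``the crux'': a prime $q\in\pi(G/KR)\setminus\pi(K)$ with $q\nmid[G:T]$ for $T=I_G(\theta)$, $\theta\in\irr R$. There you only gesture at ``the projective character theory of the triple $(T,R,\theta)$'' without carrying it out, and this is a genuine gap. When $\theta$ is $G$-invariant (so $[G:T]=1$), the degrees of the members of $\irr{G\,|\,\theta}$ are $\theta(1)$ times the degrees of the irreducible \emph{projective} representations of $G/R$ with the $2$-cocycle attached to $\theta$, and neither Theorem~\ref{MoretoTiep} nor \cite[Theorem~A]{W1} controls those projective degrees --- they only give \emph{ordinary} character degrees of $G/R$ --- so nothing forces one of them to be divisible by $q$. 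The paper avoids this entirely by a different decomposition: by \cite[Proposition~2.10(a)]{DKP} there is a character $\theta_0\in\irr K$ (of the simple normal subgroup, not of $R$) such that $q$ divides $|G:I_G(\theta_0)|$; pairing it with $\varphi\in\irr R$ of degree divisible by $r$ and observing that $I_G(\theta_0\times\varphi)\leq I_G(\theta_0)$, Clifford's theorem puts $qr$ into the degree of every $\chi\in\irr{G\,|\,\theta_0\times\varphi}$, with no extension or cocycle analysis needed. To close your argument you would need either this input from \cite{DKP} or a genuine analysis of the relevant projective degrees; neither is supplied.
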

\begin{proof}
  We start by proving claim (a). Let $q\in \V R$; as  $KR = K \times R$,  $q$ is adjacent in $\Delta(G)$ to all vertices $\neq q$ in $\V K = \pi(K) = \pi(KR/R)$.
  For $t \in \pi(G/R)- \pi(KR/R)$, by part (a) of Proposition 2.10 of~\cite{DKP} there exists a character $\theta \in \irr K$
  such that $t$ divides $|G:I_G(\theta)|$. Take $\varphi \in \irr R$ such that $q$ divides $\varphi(1)$ and let
  $\psi = \theta \times \varphi \in \irr{KR}$. Since $I_G(\psi) \leq I_G(\theta)$, $tq$ divides $\chi(1)$ for every
  $\chi \in \irr G$ that lies over $\theta$.
  Finally, if $p \in \V G$ but $p \not\in \pi(G/R)$, then $p \in \V R$; so if $q \neq p$, then $q \in \pi(G/R)$ by the assumption on
  $\V G$,   and hence by what we have just proved $q$ is adjacent to $p$ as well. So, $q$ is a complete vertex of $\Delta(G)$.

 We now move to claim (b). Assuming  $2 \in \pi(G/KR)$ and $a \geq 3$, by Theorem~\ref{MoretoTiep} we get that $2$ is adjacent in $\Delta(G)$ to all primes $\neq 2$
  of $\pi(G/R)$; so to $p$ as well if $p \neq 2$ and $p \in \pi(G/R)$.
  On the other hand, if $p \in \V G - \pi(G/R)$, so $p\neq 2$,  then $p \in \V R$ and $p$ is adjacent to $2$ in $\Delta(G)$ by part (a).
  Hence, $2$ is a complete vertex of $\Delta(G)$.

  Finally, we prove claim (c). Assume that $2 \not\in \pi(G/KR) \cup \V R$. Then every character $\chi \in \irr G$ such that $\chi(1)$ is even
  lies over a character $\psi \in \irr{KR}$ with $\psi(1)$ even. Writing $\psi = \alpha \times \beta$ with
  $\alpha \in \irr K$ and $\beta \in \irr R$, since $2 \not \in \V R$ we deduce that $\alpha$ has even degree, and
  hence $\alpha$ is the Steinberg character of $K$. Thus $\alpha$ extends to $G$ (see for instance \cite{S}) and hence $\chi(1) = \alpha(1) \gamma(1) =
  2^a\gamma(1)$ for a suitable $\gamma \in \irr{G/K}$, concluding the proof. 
\end{proof}
\begin{theorem}\label{main1}
  Let $R$ and $K$ be, respectively, the solvable radical and the solvable residual of the group $G$ and assume that
  $G$ has a composition factor $S \cong \SL{2^a}$, with $a \geq 3$.
  Then, $\Delta(G)$ is a connected graph and it  has a cut-vertex $p$ if and only if $G/R$ is an almost simple group with socle
  isomorphic to $S$,
  $\V G = \pi(G/R) \cup \{ p\}$ and one of the following holds.
  \begin{enumeratei}
  \item $K$ is a minimal normal subgroup of $G$, $K \cong S$ and
    either $p=2$ and $\V{G/K} \cup \pi(G/KR) = \{ 2\}$,
    or $p\neq 2$, $\V{G/K} = \{p\}$ and $G/KR$ has odd order. 
  \item $K$ contains a minimal normal subgroup $L$ of $G$ such that $K/L \cong S$, $L$ is  the natural module
    for $K/L$, $p \neq 2$, $\V{G/K} = \{p\}$, $G/KR$ has odd order and,  for a Sylow $2$-subgroup  $T$ of $G$,
    $T' = (T \cap K)'$. 
  \end{enumeratei}
In all cases, $p$ is  is a complete vertex and the only cut-vertex of $\Delta(G)$.
\end{theorem}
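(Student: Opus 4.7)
The plan is to prove the equivalence in both directions, with the bulk of the work on the forward (necessity) side. For necessity, assume $\Delta(G)$ is connected with cut-vertex $p$. Theorem~\ref{0.2} immediately yields that $G/R$ is almost simple with socle $S\cong\SL{2^a}$ and $\V G=\pi(G/R)\cup\{p\}$. Since $a\geq 3$, Theorem~\ref{Char2} gives the structural dichotomy: either $K\cong S$ (so $K\cap R=1$ by simplicity of $K$, whence $K$ is minimal normal in $G$: case~(a)), or $K$ has a minimal normal subgroup $L$ of $G$ with $K/L\cong S$ and $L$ the natural $K/L$-module (case~(b)). What remains is to pin down the arithmetic conditions on $p$, $\V{G/K}$, $\pi(G/KR)$, and, in case~(b), also on the commutator subgroup of a Sylow $2$-subgroup.

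The driving combinatorial fact is a precise description of $\Delta(K)$: in case~(a), Theorem~\ref{PSL2bis} splits $\Delta(K)$ into three cliques on $\{2\}$, $\pi(2^a-1)$ and $\pi(2^a+1)$; in case~(b), Remark~\ref{NaturalExtended} splits it into two pieces, namely $\{2\}$ and the clique $\pi(K)-\{2\}$. Because $\Delta(K)$ sits as a subgraph of $\Delta(G)$, these cliques survive in $\Delta(G)$ and must remain separated after removing $p$. Combining this with Theorem~\ref{MoretoTiep}, which controls the edges contributed by $\pi(G/KR)$, and Lemma~\ref{star} in case~(a), which controls the edges contributed by $\V R$, I would rule out every odd prime $\neq p$ that could bridge the cliques, thereby forcing the listed conditions on $\V{G/K}$ and $\pi(G/KR)$. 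In case~(b), the further condition $T'=(T\cap K)'$ comes from Lemma~\ref{extn}: since removing $p$ must detach the isolated vertex $2$, the unique neighbour of $2$ in $\Delta(G)$ must be $p$, which by Clifford theory forces the existence of a $T$-invariant non-principal $\lambda\in\irr L$ extending to $T$, and Lemma~\ref{extn} identifies this extendibility precisely with the commutator identity.

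For sufficiency, the approach is reversed: compute $\Delta(G)$ from the backbone $\Delta(K)$ together with the edges produced by $R$, $G/K$ and $G/KR$. The assumptions on $\V{G/K}$ and on the parity/size of $G/KR$ guarantee that every cross-component edge passes through $p$, while an explicit realization of at least one edge across each gap (via Lemma~\ref{extn} and Clifford correspondence in case~(b), and via the degrees $1,2^a-1,2^a,2^a+1$ of $\SL{2^a}$ in case~(a)) yields connectedness and makes $p$ a cut-vertex. Completeness of $p$ and its uniqueness as cut-vertex follow from Theorem~\ref{MoretoTiep}, Lemma~\ref{star}, and the observation that each block outside $\{p\}$ has at least two vertices, so removal of any $q\neq p$ leaves each block connected to $p$. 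The main obstacle, and the arithmetically subtlest point of the argument, is the derivation of $T'=(T\cap K)'$ in the necessity direction of case~(b): this requires careful bookkeeping of the $2$-part of character degrees of $G$ lying over non-principal characters of the natural module $L$, together with the verification that the hypothesis ``$T_0/L$ abelian'' of Lemma~\ref{extn} is satisfied so that the extension criterion applies as an equivalence.
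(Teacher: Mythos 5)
Your proposal is correct and follows essentially the same route as the paper: Theorem~\ref{0.2} and Theorem~\ref{Char2} for the structural dichotomy, then Lemma~\ref{star}, Theorem~\ref{MoretoTiep}, Theorem~\ref{PSL2bis} and Remark~\ref{NaturalExtended} to pin down $\V{G/K}$ and $\pi(G/KR)$, and Lemma~\ref{extn} (with the contrapositive Clifford argument showing that non-extendibility of a $T$-invariant $\lambda$ would make $2$ adjacent to all of $\pi(2^{2a}-1)$) for the condition $T'=(T\cap K)'$. The only slight imprecision is that a $T$-invariant non-principal $\lambda\in\irr L$ exists automatically (up to conjugacy, since $L\leq\zent{T_0}$ and $L$ is self-dual); what the cut-vertex hypothesis forces is its extendibility to $T$, exactly as you then use it.
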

\begin{proof}
  We start by proving the ``only if'' part of the statement, assuming that $\Delta(G)$ is connected and that has a cut-vertex $p$.
  Then, by Theorem~\ref{0.2} $G/R$ is an almost-simple group and $\V G = \pi(G/R) \cup \{p\}$. As a consequence, we have that
  the socle $M/R$ of $G/R$ is isomorphic to $S$.
  Let $L = K \cap R$; since  $KR = M$, we see that  $K/L \cong S$.
  
  We observe that by Theorem~\ref{MoretoTiep} every prime in $\pi(G/KR)$ is adjacent in $\Delta(G)$
  to every other vertex in $\Delta(G)$, except possibly $2$ and $p$. Moreover,  part (a) of Lemma~\ref{star} yields that
  $\V R \sbs \{p\}$. 

  We consider first the situation arising when  $L=1$.  Assuming  $p=2$, then $\V G = \pi(G/R)$ and by the above observation we
  deduce that $G/KR$ is a $2$-group and that $\V R \sbs \{2\}$.
  If $G = KR = K \times R$, then,  as $\Delta(G)$ is connected and  $2$ is a cut-vertex of $\Delta(G)$, 
  it immediately follows that $\V R = \{2\}$. So, in any case, $\V{G/K} \cup \pi(G/KR) = \{ 2\}$.
  Assuming instead $p \neq 2$, then (since no vertex in $\V G -\{p \}$ can be complete in $\Delta(G)$)
  part (b) of Lemma~\ref{star} implies that $|G/KR|$ is odd  and it only  remains to show
  that $\V{G/K} = \{p\}$.
  As $\V R \sbs \{p\}$ and $p \neq 2$,  part (c) of Lemma~\ref{star} yields that $2$ is adjacent in $\Delta(G)$ to all primes in $\V{G/K}$, and to them only. 
  As $\Delta(G)$ is connected, it follows that $\V{G/K}$ is non-empty.
  If $q \in \V{G/K}$ and $q\neq p$, then $q$ divides $|G/KR|$ (because
  $\V{KR/K} = \V R \sbs \{ p \}$) and hence, by Theorem~\ref{MoretoTiep}, $q$ (being adjacent also to $2$) would be
  a complete vertex of $\Delta(G)$, a contradiction. Hence, $\V{G/K} = \{p\}$.

  We assume now $L \neq 1$. Then,  by Theorem~\ref{Char2}, $L$ is a minimal normal subgroup of $G$ and $L$ is
  the natural module for $K/L \cong S$.
  By Remark~\ref{NaturalExtended}, the subgraph of $\Delta(G)$ induced by the vertex set $\V G - \{ 2,p\}$ is a
  complete graph. Hence, the assumptions on $\Delta(G)$ imply that $p \neq 2$ and that $2$ is adjacent only to $p$ in $\Delta(G)$.
  Moreover, recalling that $\Delta(G/L)$ is a subgraph of $\Delta(G)$,  by part (a) and part (b) of Lemma~\ref{star}
  we deduce that $2 \not\in \V{R/L} \cup \pi(G/KR)$ and hence, by part (c) of the same lemma, that $\V{G/K}= \{p\}$. 
  Let now $T$ be a Sylow $2$-subgroup of $G$; as $|G/KR|$ is odd, then $T \leq KR$.
  Setting $T_0 = T \cap R$, we observe that $T_0/L$ is an abelian normal  Sylow $2$-subgroup of $R/L$ because $2 \not\in \V{R/L}$.
  Let $T_1 = T \cap K$  and assume, working by contradiction, that $T' \neq T_1'$.
  Let  $\lambda \in \irr L$ be a non-principal character; by Lemma~\ref{extn} $L \leq \zent{T_0}$, so $\lambda$ is
  $T_0$-invariant and, since $L$ is a self-dual $K/L$-module, $I_{K}(\lambda)/L$ is a Sylow $2$-subgroup of $K/L$.
  Hence, since $T = T_0T_1$,   we can assume (up to conjugation) that $\lambda$ is  $T$-invariant.  
  So, by Lemma~\ref{extn}  $\lambda$ has no extension to $T$. As $I_{K}(\lambda)/L = T_1/L$ and $KR/L = K/L \times R/L$,
  $T/L$ is a normal subgroup of $I_{KR}(\lambda)$ and hence $2$ divides the degree of every irreducible character
  $\psi$ of $I_{KR}(\lambda)$ that lies over $\lambda$. By Clifford correspondence, it follows that $2$ is adjacent
  in $\Delta(G)$ to all primes in $\pi(2^{2a} -1) = \pi(|K:I_K(\lambda)|)$,
  a contradiction. Hence, $T' = (T\cap K)'$.

  \medskip
  We proceed now to prove the ``if'' part of the statement and we assume that $G/R$ is an almost simple group
  and that $\V G = \pi(G/R) \cup \{p\}$ for some prime $p$.

  Suppose first  that (a) holds, so  $K$ is a minimal normal subgroup of $G$ and  $K\cong S$. Hence, $KR = K \times R$. 
  Assume that  $p=2$, so $\V G = \pi(G/R)$,   and that $\V{G/K} \cup \pi(G/KR) = \{ 2\}$.  
  If $KR < G$, then $2$ is a complete vertex of $\Delta(G)$ by part (b) of Lemma~\ref{star}, and if $G = KR$, then the same
  is true because in this case $\V R = \V{G/K} = \{2\}$. For $\chi \in \irr G$ and an irreducible constituent $\psi$ of $\chi_{KR}$,
  the odd parts of $\chi(1)$ and of $\psi(1)$ coincide by~\cite[Corollary 11.29]{Is}, so by part (a) of Theorem~\ref{PSL2bis}
  the graph $\Delta(G)-2$, obtained by deleting the vertex $2$ and all incident edges,  has 
  two complete connected components, with vertex sets $\pi(2^a-1)$ and $\pi(2^a+1)$.
  So, $2$ is a cut-vertex of $\Delta(G)$ and, being a complete vertex of $\Delta(G)$, it is the unique cut-vertex of $\Delta(G)$.  
If  $p\neq 2$, $\V{G/K} = \{p\}$ and $G/KR$ has odd order, then (as $R \cong KR/K \nor G/K$) $2 \not\in \V R$ and
by part (c) of Lemma~\ref{star} the vertex $2$ is adjacent only to $p$ in $\Delta(G)$. Hence, $p$ is a cut-vertex of $\Delta(G)$.
We also observe that $p$ is a complete vertex of $\Delta(G)$: this is a consequence of  Theorem~\ref{MoretoTiep} if $p \in \pi(G/KR)$, while if $p \not\in \pi(G/KR)$ the assumption $\V{G/K} = \{p\}$ implies that $p \in \V R$
and hence the claim follows by part (a) of Lemma~\ref{star}. Thus,  $p$ is the unique cut-vertex of $\Delta(G)$.

We assume now that (b) holds, so $K$ contains a minimal normal subgroup $L$ of $G$ such that $K/L \cong S$ and  $L$ is
the natural module for $K/L$. Moreover, $p \neq 2$, $\V{G/K} = \{p\}$, $G/KR$ has odd order and,
for any Sylow $2$-subgroup  $T$ of $G$,  $T' = (T \cap K)'$.
For a non-principal $\lambda \in \irr L$, the  argument used in the fourth paragraph of this proof shows that
$I = I_G(\lambda)$ contains a Sylow $2$-subgroup $T$ of $G$, and $T/L$ is abelian and  normal in $I/L$.
By Lemma~\ref{extn} $\lambda$ extends to $T$ and hence $\lambda$
extends to $I_G(\lambda)$ by~\cite[Theorem 6.26]{Is}.  So, Gallagher's theorem implies that every irreducible character of $G$ that lies over
$\lambda$ has odd degree. We hence deduce that if $\chi \in \irr G$ has even degree, then $\chi \in \irr{G/L}$.
Then, by part (c) of Lemma~\ref{star},   $2$ is adjacent only to $p$ in $\Delta(G)$. So, by  Remark~\ref{NaturalExtended},
the graph obtained by removing the vertex $p$ from $\Delta(G)$ has
two connected components: the single vertex $2$ and the  complete graph with vertex set $\V{G} -\{2,p\}$.
By the discussion of case (a), we know that  $p$ is a complete vertex of $\Delta(G/L)$, hence of $\Delta(G)$; thus, $p$ is  the only cut-vertex of $\Delta(G)$. 
\end{proof}

\section{Proof of Theorem 2}
The last section of this paper is devoted to the proof of Theorem~2, that we state again (in a slightly different form, for technical reasons) as Theorem~\ref{main2}.
\begin{lemma}\label{star2}
  Let $K$ be a normal subgroup of the group $G$ with $K \cong \SL{4}$ or $K \cong \SL{5}$.
  Let $R$ be the solvable radical of $G$, $N = K \cap R$ and assume that $\V G =  \{2,3,5, p \}$ for a suitable prime $p$.
  Then
  \begin{enumeratei}
  \item The primes in $\V{G/K}$ (if any) are complete vertices of $\Delta(G)$.
    
\item If $N \neq 1$ or $KR \neq G$, then $2$ is adjacent to $3$ in  $\Delta(G)$. 
\item If   $5 \not\in \V{G/K}$,   
  then $5$ is adjacent in $\Delta(G)$ exactly to the primes in $\V{G/K}$.
\end{enumeratei}
\end{lemma}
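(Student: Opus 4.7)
The plan follows the approach of Lemma~\ref{star} adapted to $K\in\{\SL 4,\SL 5\}$. Key observation: since $K/N$ is simple (isomorphic to $A_5$), we have $[K,R]\le K\cap R=N$, yielding $KR/N=(K/N)\times(R/N)$; also, a standard Clifford argument using $[G:KR]\mid 2$ shows that any prime in $\V G\setminus\{2,3,5\}$ lies in $\V R$. Moreover, $G/K$ is solvable (it is an extension of $R/N$ by $G/KR\le C_2$).

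For part~(a), I take $q\in\V{G/K}$ and fix $\alpha\in\irr{G/K}$ with $q\mid\alpha(1)$, viewed as a character of $G$ trivial on $K$. To show $q$ is adjacent to each $\ell\in\V G-\{q\}$: when $\ell\in\pi(K)=\{2,3,5\}$ admits a $G$-invariant $\theta\in\irr K$ of degree divisible by $\ell$ that extends to $\tilde\theta\in\irr G$ (e.g.\ the Steinberg character, the unique degree-$5$ character of $K$, or the degree-$6$ character of $2.A_5$), Gallagher's theorem gives $\tilde\theta\cdot\alpha\in\irr G$ of degree $\theta(1)\alpha(1)$, divisible by $q\ell$. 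The exceptional case $\ell=3$ with $K\cong\SL 4$ and $G\neq KR$ (where the degree-$3$ characters of $A_5$ swap under the outer automorphism) is handled by picking such a $\theta$, extending to $KR=K\times R$, multiplying by a character of $R$ of degree divisible by the relevant prime, and inducing to $G$. For $\ell=p\notin\pi(K)$, one uses $p\in\V R$ via the analogous central-product construction in $KR/N$.

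For part~(b), under the assumption $N\neq 1$ or $KR\neq G$ I produce $\chi\in\irr G$ with $6\mid\chi(1)$. If $K\cong\SL 5$, the unique degree-$6$ character of $K$ is $\mathrm{Aut}(K)$-invariant, hence $G$-invariant, and extends to $G$, yielding $\chi$. If $K\cong\SL 4$ and $KR\neq G$, then $[G:KR]=2$ and $G$ induces the outer automorphism on $K$, swapping the two degree-$3$ characters; for such $\theta$, $I_G(\theta)=KR$, so every $\chi\in\irr G$ above $\theta$ has degree divisible by $[G:KR]\cdot\theta(1)=6$. Either subcase gives the edge $\{2,3\}$ in $\Delta(G)$.

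For part~(c), the forward direction follows from part~(a). For the converse, suppose $\chi\in\irr G$ has $5q\mid\chi(1)$ with $q\neq 5$, and let $\theta\in\irr K$ lie below $\chi$. Orbit sizes on $\irr K$ being at most $2$ give $5\nmid[G:I_G(\theta)]$. If $5\nmid\theta(1)$, then $5$ divides the relative degree $e=\psi(1)/\theta(1)$ for $\psi\in\irr{I_G(\theta)}$ over $\theta$ below $\chi$; by character-triple isomorphism, $e$ is a character degree of a central extension $\Gamma$ of $I_G(\theta)/K$ by a cyclic group $Z$ with $|Z|$ dividing $|M(A_5)|=2$, and a direct Sylow analysis (using that $Z$ is a $2$-group) forces $5\in\V{\Gamma/Z}=\V{I_G(\theta)/K}$; Ito--Michler applied to the solvable group $G/K$ then yields $5\in\V{G/K}$, contradicting the hypothesis. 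Hence $5\mid\theta(1)$, so $\theta$ is the unique degree-$5$ character of $K$, which is $G$-invariant and extends to $\tilde\theta\in\irr G$; Gallagher gives $\chi=\tilde\theta\cdot\beta$ for some $\beta\in\irr{G/K}$, and $q\mid\beta(1)\in\cd{G/K}$ gives $q\in\V{G/K}$. The main obstacle is precisely this character-triple bookkeeping in~(c), which rests on the Schur multiplier of $A_5$ having order coprime to~$5$.
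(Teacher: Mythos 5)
Your part (b) is fine (and, modulo the unnecessary and unjustified assertion that the degree\nobreakdash-$6$ character extends to $G$ --- any $\chi\in\irr G$ lying over it already has degree divisible by $6$ by Clifford), it coincides with the paper's one\nobreakdash-line argument. Parts (a) and (c), however, contain genuine gaps centred on extendibility. In (a) your method needs the chosen $G$-invariant $\theta\in\irr K$ to extend to $G$ so that Gallagher applies, but you only assert this. For $\theta$ with $N\leq\ker{\theta}$ it is true (extend trivially across the central product $KR$, then across the cyclic group $G/KR$), but your explicit candidate for $K\cong\SL 5$, the degree-$6$ character, is faithful on $N=\zent K$ and in general does not extend even to $KR$: in $G=\SL 5\circ Q_8$ (central product over the common centre, so $R=Q_8$, $N=K\cap R$ of order $2$) the unique character of $G$ lying over the degree-$6$ character of $K$ has degree $12$. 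So the step ``extends to $\tilde\theta\in\irr G$'' fails as stated, and with it your treatment of $\ell\in\{2,3\}$ when $K\cong\SL 5$ (it is repairable --- use the degree-$4$ character trivial on $N$ for $\ell=2$, and treat the possibly fused degree-$3$ characters exactly as you do for $K\cong\SL 4$ --- but that repair is not in your text). The paper avoids extending even-degree characters altogether: it quotes Lemma~\ref{star}(a) to make the primes of $\V{R/N}$ complete, and for the one leftover prime $q=2=|G/KR|$ it induces characters $\varphi\times\psi$ from $KR/N=K/N\times R/N$ whose inertia group is $KR/N$, which requires no extendibility at all.

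In (c) your case $5\nmid\theta(1)$ rests on the claim that the cyclic centre $Z$ of the character-triple replacement has order dividing the Schur multiplier of $A_5$; this is misattributed --- the relevant multiplier is that of the solvable group $I_G(\theta)/K$, not of $A_5$, and it need not have order $2$. What salvages the conclusion is the standard fact that the order of the obstruction class divides $\theta(1)$, which is coprime to $5$ in that case; but the whole detour is unnecessary. Your own opening observation that $KR$ is a central product, so that every constituent of $\chi_{KR}$ has degree $\theta(1)\beta(1)$ with $\beta\in\irr R$ and $5\notin\V R$, combined with $|G:KR|\leq 2$, already forces $5\mid\theta(1)$ whenever $5\mid\chi(1)$; this is exactly the paper's (much shorter) argument, after which the unique degree-$5$ character of $K$ extends to $G$ and Gallagher finishes as you say.
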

\begin{proof}
  (a):
  By  part (a) of Lemma~\ref{star}  the primes in  $\V{KR/K} = \V{R/N}$ are complete
  vertices of $\Delta(G)$. Let $q \in \V{G/K} - \V{KR/K}$; then for $\chi \in \irr{G/K}$ such that $q$ divides $\chi(1)$ and
  an irreducible constituent $\theta$ of $\chi_{KR/K}$, $q$ divides  $\chi(1)/\theta(1)$ by Clifford's theorem and
  $\chi(1)/\theta(1)$ divides $|G/KR|$ by \cite[Corollary 11.29]{Is}.
  As $|G/KR| \leq 2$, we have $q = |G/KR| = 2$ and $\chi = \theta^{G/K}$.
  Seeing by inflation $\theta \in \irr{KR/N}$ with $K/N \leq \ker{\theta}$, we write
  $\theta = 1_{K/N} \times \psi$, with $\psi \in \irr{R/N}$ and $I_{G/N}(\psi) = I_{G/N}(\theta) = KR/N$.
  So, for every $\varphi\in \irr{K/N}$, $\varphi \times \psi \in \irr{KR/N}$ and $I_{G/N}(\varphi \times \psi) = KR/N$,
  hence $2$ is adjacent to both $3$ and $5$ in $\Delta(G)$. If $p \not\in \{2,3,5\}$, then (since $|N| \leq 2$ and $p \in \V G$)
  $R/N \cong KR/K$ cannot have a normal abelian Sylow  $p$-subgroup, so $p \in \V{KR/K}$ is adjacent to $2$ in $\Delta(G)$
  and $q= 2$ is a complete vertex of $\Delta(G)$.

  \smallskip
  Part (b) is clear,  as both $\SL 5$ and $\rm{Aut}(\SL 4) \cong S_5$ have an irreducible character of degree $6$.

  \smallskip
  (c): Since $|G/KR| \leq 2$, $K R$ contains every Sylow $5$-subgroup of $G$ and, as $5\not\in \V{R}\sbs \V{G/K}$,  if $\chi \in \irr G$ has
    degree divisible by $5$, then $\chi$ lies (both if $K \cong \SL 4$, as well as if $K \cong \SL 5$)
    over the unique character $\alpha \in \irr K$ such that $5$ divides $\alpha(1)$.
    It is easily seen that $\alpha$ extends to $G$. By Gallagher's theorem, we conclude that
    $5$ is adjacent only to the vertices of $\V{G/K}$ in $\Delta(G)$.
\end{proof}
\begin{lemma}\label{star3}
  Let $R$ and $K$ be, respectively, the solvable radical and the solvable residual of the group $G$, and let
$N = R \cap K$. 
  \begin{enumeratei}
    \item If $2\not\in \V{G/K}$, $G = KR$  and $N$ is the natural module for $K/N \cong \SL{4}$, then $N \leq \ker{\chi}$ for
  every $\chi\in \irr G$ such that $\chi(1)$ is even. 
\item   Let  $L \nor G$, $L \leq N$, be  such that $K/L\cong \SL 5$ and $L$ is the natural module for $K/L$.
  If   $5 \not\in \V{G/K}$,  then $5$ is adjacent in $\Delta(G)$ exactly to the primes in $\V{G/K}$.
\end{enumeratei}

\end{lemma}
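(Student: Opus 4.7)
For part (a), my plan is to show that any non-principal $\lambda\in\irr N$ extends to its inertia subgroup $I=I_G(\lambda)$, whence by Clifford--Gallagher every $\chi\in\irr{G\mid\lambda}$ has degree $15\gamma(1)$ for some $\gamma\in\irr{I/N}$. The direct-product decomposition $G/N=K/N\times R/N$ (which follows from $K\cap R=N$, $[K,R]\leq N$, and $K/N$ being simple non-abelian) combined with the hypothesis $2\notin\V{G/K}=\V{R/N}$ forces, via Ito--Michler, the Sylow $2$-subgroup $T/N=T_1/N\times T_0/N$ of $I/N$ (with $T$ a Sylow $2$-subgroup of $G$, $T_1=T\cap K$, $T_0=T\cap R$) to be abelian and normal in $I/N$; hence $\gamma(1)$ is odd and $\chi(1)=15\gamma(1)$ is odd, so any $\chi$ of even degree must have $N\leq\ker\chi$.

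The crucial step is verifying the hypothesis $T'=T_1'$ of Lemma~\ref{extn}, which I split in two. (i) $T_0$ is abelian: as the preimage in $R$ of the characteristic Sylow $2$-subgroup of $R/N$, $T_0$ is normal in $G$; the direct-product structure of $G/N$ forces $K/N$ to act trivially on $T_0/N$, so the commutator pairing $T_0/Z(T_0)\otimes T_0/Z(T_0)\to T_0'\leq N$ is a $K/N$-equivariant map from a trivial module into the irreducible natural module $N$, and since $N^{K/N}=0$ this map vanishes, giving $T_0'=1$. (ii) $[T_0,T_1]\leq T_1'$: because $H^2(\SL{4},V_0)=0$, $K$ splits as $N\rtimes\SL{4}$ and correspondingly $T_1=N\rtimes(T_1/N)$; the action of $t_0\in T_0$ on $T_1$, being trivial on $N$, is encoded by a $1$-cocycle $f\colon T_1/N\to N$ with $u\cdot f(u)=f(u)$, and a direct computation in the semidirect product yields $[t_0,(n,u)]=(f(u),0)\in N^u=Z(T_1)=T_1'$. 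Combining (i) and (ii) gives $T'=T_1'$; by Lemma~\ref{extn} $\lambda$ extends to $T$, and \cite[Corollary~11.31]{Is} together with \cite[Corollary~8.16]{Is} (for odd primes) promotes this to an extension to $I$.

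For part (b), my plan is to show that every $\chi\in\irr G$ with $5\mid\chi(1)$ is a Gallagher twist of the inflation $\mathrm{St}\in\irr{K/L}$ of the Steinberg character of $\SL{5}$. First, Clifford theory over $L$ gives $\V K=\{1,2,3,4,5,6,24\}$: characters trivial on $L$ are inflations from $\irr{\SL{5}}$, while those over a non-principal $\lambda\in\irr L$ have inertia $I_K(\lambda)$ equal to a Sylow $5$-subgroup of $K$ (since $K/L$ acts transitively on $\widehat{L}\setminus\{0\}$ with cyclic stabiliser of order $5$), the character $\lambda$ extends to $I_K(\lambda)$ by cyclicity of $I_K(\lambda)/L$, and induction to $K$ yields characters of degree $24$. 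Consequently $\mathrm{St}$ is the unique character of $K$ of degree divisible by $5$. Next, I would restrict $\chi$ to $KR$: since $[G:KR]\leq 2$ is coprime to $5$, any constituent $\chi'$ of $\chi|_{KR}$ still has degree divisible by $5$. For $\chi'$ with $L\leq\ker\chi'$, the factorisation of $KR/L$ as a (direct or central) product of $K/L$ and $R/L$, together with $5\notin\V{G/K}\supseteq\V{R/L}$, forces the $K/L$-factor of $\chi'$ to be $\mathrm{St}$. For $\chi'$ not trivial on $L$, a parallel Clifford analysis at the $KR$-level shows that $\chi'(1)$ is a multiple of $24$ times the degree of a character of $I_{KR/L}(\mu)$ whose Sylow $5$-subgroup is abelian normal, so $5\nmid\chi'(1)$, ruling this out. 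Thus $\chi$ lies over $\mathrm{St}$; since $\mathrm{St}$ is $G$-invariant (unique of its degree) and extends to $G$, Gallagher yields $\chi=\widetilde{\mathrm{St}}\cdot\gamma$ with $\gamma\in\irr{G/K}$ and $\chi(1)=5\gamma(1)$, from which the adjacency claim follows.

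The main obstacle I anticipate is the subcase analysis in part (b) for characters $\chi'$ of $KR$ lying over a non-principal $\mu\in\irr L$, particularly when $N/L=Z(K/L)\cong\Z/2$ so that $KR/L$ is a central (rather than direct) product: the Clifford computation at the $KR$-level then requires careful bookkeeping of inertia subgroups and a subgroup-level application of Ito--Michler to conclude that no $5$-divisible degree arises. Part (a)'s cohomological argument (vanishing of the commutator pairing) is the technical heart of the lemma but executes cleanly once the direct-product structure of $G/N$ is in place.
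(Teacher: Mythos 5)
Your part (a) is correct, and for the crucial identity $T'=(T\cap K)'$ you take a genuinely different route from the paper. The paper also first shows $T_0=T\cap R$ is abelian (via the coprime decomposition $T_0=N\times\cent{T_0}{B}$ for $B/N\in\syl 3{K/N}$, whereas you use $K$-equivariance of the commutator map together with $\cent NK=1$; both work since $N\leq\zent{T_0}$ and $[K,R]\leq N$), but for the containment $[T_0,T_1]\leq T_1'$ the paper identifies the relevant quotient of $T_0$ with a subgroup of ${\rm Out}(K)$ and verifies $[T_1,\oh 2{{\rm{Out}}(K)}]\leq T_1'$ by a GAP computation. Your cocycle argument replaces that computation: each $t_0\in T_0$ induces on $T_1$ an automorphism trivial on $N$ and on $T_1/N$, hence a derivation $f\colon T_1/N\to N$; since every non-trivial $u\in T_1/N$ is an involution, the cocycle identity gives $f(u)\in\cent Nu$, and for the natural module all non-trivial elements of a Sylow $2$-subgroup of $K/N$ have the \emph{same} fixed space, namely $\cent N{T_1}=\zent{T_1}=T_1'$. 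This is a clean, computation-free alternative, and the rest of your argument (extension from $T$ to $I_G(\lambda)$, Ito on the abelian normal Sylow $2$-subgroup of $I/N$) matches the paper.

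Part (b) follows the paper's strategy (show $L\leq\ker\chi$ whenever $5\mid\chi(1)$, then exploit that the inflation of the Steinberg character is the unique $\alpha\in\irr{K}$ with $5\mid\alpha(1)$), but it has a genuine gap exactly where you flag "careful bookkeeping": for $\chi'\in\irr{KR}$ lying over a non-principal $\mu\in\irr L$, you get $\chi'(1)=24\,\psi(1)$ with $\psi\in\irr{I_{KR}(\mu)\mid\mu}$, and to conclude $5\nmid\psi(1)$ it is not enough that the Sylow $5$-subgroup $Q/L=Q_0/L\times Q_1/L$ of $I_{KR}(\mu)/L$ is abelian and normal: one must also rule out that $\mu$ ramifies in $Q$. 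Since $Q/L$ is abelian but in general non-cyclic, a $Q$-invariant linear character of $L\cong C_5\times C_5$ could a priori be fully ramified in the $5$-group $Q$, which would make $\psi(1)$ divisible by $5$. The paper supplies the missing ingredient by proving that $G$ splits over $L$ — a Frattini argument applied to a subgroup $X$ of order $2$ in $N$, using that the central involution of $K/L\cong\SL 5$ acts fixed-point-freely on the natural module $L$, gives $G=L\rtimes\cent GX$ — whence $\mu$ extends to $I_G(\mu)$ and Gallagher applies. (The normality of $Q_0/L$ in $G/L$, which you also need, likewise requires the short argument via $5\notin\V{R/N}$ and $|N/L|=2$.) Your plan becomes complete once this splitting step is added.
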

\begin{proof}
  (a):
 Assume that $2\not\in \V{G/K}$, $G = KR$ and that $N$ is the natural module for $K/N \cong \SL 4$.
  Let $\lambda \in \irr N$ be a non-principal character and let $I = I_G(\lambda)$, $T$ a Sylow $2$-subgroup of $I$,
  $T_0 = T \cap R$ and $T_1 = T \cap K$. Since, by Lemma~\ref{extn}, $I$ contains a Sylow $2$-subgroup of $R$, we see that $T_0\in\syl 2 R$;
  moreover, as $2 \not\in \V{G/K} = \V{R/N}$, $T_0/N$ is abelian and $T_0 \nor R$.
  For $B/N \in \syl 3{K/N}$, as $N \leq \zent{T_0}$ and $[B/N, T_0/N] =1$ by coprimality we get 
  $T_0 = N \cent{T_0}B = N \times \cent{T_0}B$, because $\cent NB= 1$; in particular, $T_0$ is abelian.
  Write $C = \cent{T_0}B$ and $D = \cent{T_0}K$; so $D \nor C$.
  Since  $I\cap K$ is a Sylow $2$-subgroup of $K$, we have $T \in \syl 2G$. 
  As $T = T_1T_0$, we have $T' = T_1' [T_1, T_0] T_0' = T_1'[T_1, T_0]$. We claim that $[T_1, T_0] \leq T_1'$.
  Observing that $[T_1, T_0] = [T_1, N][T_1, C]$, it is enough to prove that $[T_1, C/D]\leq T_1'$.
  Identifying  $C/D$ with a normal  subgroup of  ${\rm{Out}}(K)$,
  one can check (for instance by GAP~\cite{GAP}, as $K = \text{SmallGroup(960,11357)}$) that 
  $[T_1, C/D] \leq [T_1, \oh 2{{\rm{Out}}(K)}] \leq T_1'$,  so the claim follows.
  Hence,  $T' = T_1'$ and by Lemma~\ref{extn} $\lambda$ extends to $T$. Thus, by~\cite[Theorem 6.26]{Is} $\lambda$
  extends to $I$. As $I/N$ has odd index in $G/N$ and has a
  normal abelian Sylow $2$-subgroup, it follows that every irreducible character of $G$ lying over $\lambda$,
  where $\lambda$ is any non-principal character of $N$, has odd degree.  
  
\smallskip
  (b): 
  We observe that $G$ splits over $L$. In fact, if $X$ is a Sylow $2$-subgroup of $N$ (so, $|X| = |N/L|= 2$), then by the Frattini argument
    $G = L \cent GX$ and, as $X$ acts fixed-point-freely on $L$, $L \cap \cent GX = 1$.

    Let $Q_0 \in \syl 5 R$; since $R/N \cong KR/K \nor G/K$, $\V{R/N} \sbs \V{G/K}$ and $5 \not \in \V{R/N}$, so
    $Q_0N/N$ is abelian and normal in $R/N$.
    As $N/L\nor R/L$ and $|N/L|= 2$, $N/L$ is central in $R/L$ and it follows
    that $Q_0/L \nor G/L$, so $Q_0 \nor G$.
    For a non-principal $\lambda \in \irr L$, $I_K(\lambda) = Q_1 \in \syl 5K$.
    So, as $|G/KR| \leq 2$, $Q = Q_0Q_1 \in \syl 5G$ and $Q \leq I = I_G(\lambda)$.
    Since $G$ splits over $L$, $\lambda$ extends to $I$ and, as $Q/L = Q_1/L \times Q_0/L$ is abelian
    and normal in $I/L$, by Gallagher's theorem and Clifford correspondence it follows that $5$ does not divide $\chi(1)$
    for every $\chi \in \irr G$ that lies over $\lambda$. 
    Thus, $L$ is contained in the kernel of every irreducible character of $G$ with degree divisible by $5$,
    and part (c) of Lemma~\ref{star2} applied to $G/L$  yields that $5$ is adjacent in $\Delta(G)$ exactly to the primes in $\V{G/K}$.
\end{proof}

\begin{theorem}\label{main2}
  Let $R$ and $K$ be, respectively, the solvable radical and the solvable residual of the group $G$ and assume that
  $G$ has a composition factor $S \cong \SL{4}$. Let $N = K \cap R$. 
  Then, $\Delta(G)$ is a connected graph and has a cut-vertex $p$ if and only if $G/R$ is an almost simple group with socle isomorphic to $S$,
  $\V G = \{2,3,5\} \cup \{ p\}$ and one of the following holds.
%
\begin{enumeratei} 
\item $K$ is isomorphic either to $\SL 4$ or to $\SL 5$ and  $\V {G/K} =\{p\}$; if $p=5$, then $K \cong \SL 4$ and  $G= K\times R$.
\item  
  $K/N \cong \SL 4$, $|N| = 2^4$, $G = KR$ and one of the following:
  \begin{description}
 \item[(i)]$N$  is  the natural module for \(K/N\), $p \neq 2$,   $\V{G/K}=\{p\}$. 
 \item[(ii)] $N$  isomorphic to the restriction to $K/L$, embedded as
   \(\Omega_4^-(2)\) into $\rm{SL}_4(2)$, of the standard module of $\rm{SL}_4(2)$.
   Moreover,    $p = 5$, $G = K \times R_0$, where $R_0 = \cent GK$, and $\V{R_0} = \V{G/K} \subseteq  \{ 5\}$;
\end{description}
\item  There exists $1\not= L \leq N$, $L$ normal in $G$, with $K/L \cong \SL{5}$ and one of the following: 
\begin{description}
\item[(i)] $|L| = 5^2$,  $L $  is the natural module for
  \(\SL{5}\),  $p\not =5$  and $\V {G/K}=\{p\}$.
\item[(ii)] $|L| = 3^4$, \(L\)  is  the natural module for \(\SL{5}\) seen as a subgroup of \({\rm{GL}}_4(3)\),
  $p=2$ and $\V {G/K}\sbs \{2\}$.
   \end{description}
  \end{enumeratei} 
In all cases, $p$ is  is a complete vertex and the only cut-vertex of $\Delta(G)$.
\end{theorem}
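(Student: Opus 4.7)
The plan is to prove Theorem~\ref{main2} in close parallel with Theorem~\ref{main1}, splitting into the ``only if'' and ``if'' implications and exploiting the structural classification already established in Theorem~\ref{PropA5}.

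For the ``only if'' direction, assume $\Delta(G)$ is connected with cut-vertex $p$. Theorem~\ref{0.2} gives that $G/R$ is almost-simple with socle isomorphic to $S\cong\SL 4$, and $\V G=\pi(G/R)\cup\{p\}=\{2,3,5\}\cup\{p\}$, since $\pi(\aut S)=\{2,3,5\}$. Then Theorem~\ref{PropA5} yields four mutually exclusive structural possibilities for $K$, which correspond respectively to cases (a), (b)(i)/(ii), (c)(i), (c)(ii) of the present theorem. A key observation used throughout is that $\cd{\SL 4}=\cd{\SL 5}=\{1,3,4,5\}$, so $\Delta(\SL 4)$ and $\Delta(\SL 5)$ are totally disconnected on $\{2,3,5\}$; hence every edge of $\Delta(G)$ is created either by an outer automorphism (controlled by Theorem~\ref{MoretoTiep}), by the solvable radical $R$ (controlled by Lemma~\ref{star}(a) and Lemma~\ref{star2}(a)), or by Clifford theory above a non-trivial minimal normal subgroup $L$ when present.

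In case (a) of Theorem~\ref{PropA5} I would follow the template of Theorem~\ref{main1}: Lemma~\ref{star2}(a) makes every prime in $\V{G/K}$ a complete vertex, so $\V{G/K}\subseteq\{p\}$ by uniqueness of the cut-vertex. A short analysis splits according to whether $p\in\{2,3,5\}$ or not, and the subcase $p=5$ is pinned down using Lemma~\ref{star2}(c) to force $G=KR$ and then a direct extension/Gallagher argument to obtain $K\cong\SL 4$ and $G=K\times R$. For cases (b), (c), (d) of Theorem~\ref{PropA5} I would read off the explicit stabilizer data recorded immediately before Lemma~\ref{LA5} (so the orbit sizes on the relevant modules and their point stabilizers) and apply Clifford's theorem to the set $\irr{G\mid\lambda}$ for non-principal $\lambda\in\irr L$. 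This pins down $p$ and $\V{G/K}$: for the natural-module subcases (b)(i) and (c)(i) the argument is a variant of that for Theorem~\ref{main1} (using Lemma~\ref{extn} and Remark~\ref{NaturalExtended}); for (b)(ii) the two $K$-orbits on the $\Omega_4^-(2)$-module with stabilizers $S_3$ and $A_4$ force $5\in\cd{G\mid\lambda}$-divisors in a way compatible with being a cut-vertex only if $p=5$ and $G=K\times R_0$ with $R_0=\cent GK$; for (c)(ii) the unique $K$-orbit of length $80$ on the $3^4$-module forces $p=2$ and $\V{G/K}\subseteq\{2\}$.

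For the ``if'' direction I would verify in each case that the described $\Delta(G)$ is connected and has $p$ as its unique, complete cut-vertex. In case (a) with $G=K\times R$ one reads $\cd G$ off from the factors and inspects the graph directly. In cases (b)(i) and (c)(i) the natural module together with Lemma~\ref{extn} and Remark~\ref{NaturalExtended} reduces the verification essentially to that of Theorem~\ref{main1}. Case (b)(ii) is handled by Lemma~\ref{star3}(a), which ensures $N$ lies in the kernel of every even-degree irreducible character of $G$; combined with Lemma~\ref{star2}(c) this isolates $5$ and produces the path $2-5-3$. Case (c)(ii) uses Lemma~\ref{star3}(b) analogously to isolate $5$ and certify $p=2$. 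In all cases, Theorem~\ref{MoretoTiep} and Lemma~\ref{star2}(a) ensure that $p$ is a complete vertex, whence the unique cut-vertex.

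The main obstacle lies in cases (b)(ii) and (c)(ii): the modules are not the natural modules, so Lemma~\ref{extn} does not apply directly. Instead, one must exploit the vanishing of $H^1$ and $H^2$ recorded in the list before Lemma~\ref{LA5} to secure the relevant splittings, and then control the prime divisors of $|K:I_K(\lambda)|$ via the explicit orbit structure. In case (b)(ii) the additional task of extracting $G=K\times R_0$ with $R_0=\cent GK$ from the hypothesis $2\notin\V{G/K}$ requires a delicate coprime-action analysis of the Sylow $2$-subgroups of $R$ acting on $N$, of the type already carried out inside the proof of Lemma~\ref{star3}(a); this is the most computation-intensive step of the argument.
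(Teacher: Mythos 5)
Your overall architecture (Theorem~\ref{0.2} to get the almost-simple quotient and $\V G=\{2,3,5\}\cup\{p\}$, then Theorem~\ref{PropA5} to enumerate the structure of $K$, then the auxiliary Lemmas~\ref{star2} and~\ref{star3} for both implications) is exactly the paper's. But the sketch rests on a false ``key observation'': $\cd{\SL 5}$ is \emph{not} $\{1,3,4,5\}$; it is $\{1,2,3,4,5,6\}$, since $\SL 5\cong 2.A_5$ has faithful characters of degrees $2,4,6$. So $\Delta(\SL 5)$ already contains the edge $2-3$, and it is not true that every edge of $\Delta(G)$ is created by outer automorphisms, by $R$, or by Clifford theory over $L$. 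This is not a cosmetic slip: the edge $2-3$ coming from the degree-$6$ character (Lemma~\ref{star2}(b), which applies whenever $N\neq 1$ or $KR\neq G$) is precisely what the paper uses to force $p\neq 5$ when $K\cong\SL 5$, i.e.\ to establish the clause ``if $p=5$ then $K\cong\SL 4$ and $G=K\times R$'' in case (a). Your proposed route through Lemma~\ref{star2}(c) does not yield this, because when $p=5$ one has $\V{G/K}\subseteq\{5\}$ and part (c) gives no contradiction by itself.

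Two further steps are glossed over in ways that would not survive execution. First, in case (b)(i) the theorem asserts $G=KR$, which has no analogue in Theorem~\ref{main1} (there one only gets $|G/KR|$ odd); the paper proves it by noting that if $G/R\cong S_5$ then $15$ divides $|G:I_G(\lambda)|$ while the Sylow $2$-subgroups of $G/N$ are non-abelian, and invoking Theorem~A of \cite{NT} to manufacture a character of degree $30$. A ``variant of Theorem~\ref{main1}'' does not produce this. Second, in case (b)(ii) your plan to obtain $G=K\times R_0$ by ``a coprime-action analysis of the Sylow $2$-subgroups of $R$ acting on $N$'' is not coherent, since $N$ is a $2$-group and that action is not coprime. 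The paper's argument is genuinely different: it uses ${\rm H}^2(K/N,N)=0$ to split $K$ over $N$, ${\rm H}^1(K_0,N)=0$ to conjugate complements and deduce $\cent GN=N\times R_0$, and then a computation showing $\norm{{\rm GL}_4(2)}{\Omega_4^-(2)}\cong S_5$ together with a degree check ruling out the $S_5$-image. Without these ingredients the decomposition $G=K\times R_0$ is not established. (Minor: on the $3^4$-module for $\SL 5$ the point stabilizers have order $3$, so there are two orbits of length $40$, not one of length $80$; this does not affect the divisibility conclusion.)
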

\begin{proof}
  We start by proving the ``only if'' part of the statement, assuming that $\Delta(G)$ is connected and that it  has a cut-vertex $p$.
  Then, by Theorem~\ref{0.2} $G/R$ is an almost-simple group and $\V G = \pi(G/R) \cup \{p\}$. 
  So,   the socle $M/R$ of $G/R$ is isomorphic to  $\SL 4$,  and $\V G = \{ 2,3,5,p\}$.
  Hence, the subgraph of $\Delta(G)$ induced  by  the set of  vertices  $\{2,3,5\}$ cannot be a clique.
  As $N = K \cap R$ and   $KR = M$, then   $K/N\cong M/R \cong \SL 4$ and $|G/KR|\leq 2$.
  Since no vertex of $\Delta(G)$ different from $p$ can be complete, part (a) of Lemma~\ref{star2} implies that $\V{G/K} \sbs \{p\}$.

  We now  apply Theorem~\ref{PropA5}, considering  the  possible  structure types  for  the
  solvable residual $K$ of $G$. 
  
    \smallskip
    If $K$ is isomorphic either to $\SL 4$ or to $\SL 5$ (i.e. $|N| \leq 2$), then part (c) of Lemma~\ref{star2} implies
    (as $5$ cannot be an isolated vertex of $\Delta(G)$) that $\V{G/K}$ is non-empty, so $\V{G/K} = \{ p\}$. 
    By part (b) of Lemma~\ref{star2} $p \neq 5$  when $K \cong \SL 5$ or $KR \neq G$; so we have case (a).

    \smallskip
    Assume now that $|N| > 2$, and that $N$ is a minimal normal subgroup of $G$.
    Then,  by Theorem~\ref{PropA5} $K/N \cong \SL 4$,
    $|N| = 2^4$ and we have two cases:   

    \smallskip
    (x): $N$ is the natural module for $K/N$: then $3$ and $5$ are adjacent in $\Delta(G)$ (see Remark~\ref{NaturalExtended}), and hence $p \neq 2$,
    as othewise $\Delta(G)$ would be a complete graph.  We show that  $G = KR$: in fact,  if this is not the case, then
    $G/R \cong S_5$ and the Sylow $2$-subgroups of $G/N$ are non-abelian. For a non-principal $\lambda \in \irr L$
    and $I = I_G(\lambda)$, $15$ divides $|G:I|$.  Hence, recalling Theorem~A of~\cite{NT}, independently on the parity of $|G:I|$ 
    there exists  $\chi\in \irr G$, lying above $\lambda$, that has degree $30$, a contradiction. Finally, we observe that
    if $G/K \cong R/N$ is abelian, then $2$ is an isolated vertex of $\Delta(G)$, because by part (a) of Lemma~\ref{star3}
    every $\chi \in \irr G$ of even degree is a character of $G/N = K/N \times R/N$. So, $\V{G/K} = \{p\}$ and  we have case (b)(i).
    
    \smallskip
    (xx): $N$ is the restriction to $K/L$, embedded as
   \(\Omega_4^-(2)\) into $\rm{SL}_4(2)$, of the standard module of $\rm{SL}_4(2)$. Then $\Delta(K)$ is the graph $2-5-3$ and hence necessarily $p = 5$.
        Let $R_0 = \cent GK$ and $C = \cent GN$. So, $N \leq C \nor G$ and   $R_0 \leq C \leq R$, since $K/N$ is the only non-solvable composition factor of $G$,  and it acts non-trivially on $N$. 
        As ${\rm{H}}^2(K/N, N) = 0$, $K$ splits over $N$; let $K_0$ be a complement of $N$ in $K$. Note that $R_0 = \cent CK = \cent C{K_0}$.
        We prove that $C = N \times R_0$. It is enough to show that $C = N R_0$, since $\zent K = 1$.
             As $[K, R] \leq N$,  in particular $[K_0, C] \leq N$ and hence
        $K_0^c \leq K_0N = K$ for every $c \in C$.
        Since ${\rm H}^1(K_0, N) = 0$, all complements of $N$ in $K$ are conjugate in $K$. It follows that there exists an
        element $b \in N$ such that $K_0^c = K_0^b$, so $d = bc^{-1} \in \norm C{K_0}$ and hence $[K_0, d] \leq K_0 \cap C = 1$,
        as $K_0 \cong K/N$ acts faithfully on $N$. Thus,   $d \in R_0$. So,  $C = N R_0 = N \times R_0$.

    The action of $G$ on $N$ gives an embedding $\phi$ of $\o{G} = G/C$ in $\widehat{G} = \rm{GL}_4(2)$.
    One can check (for instance by GAP~\cite{GAP}) that $\norm{\widehat{G}}{\phi(\o K)} \cong S_5$,  and that if
    $\phi(\o G)  \cong S_5$ then $\Delta(G/R_0)$, which is a subgraph of $\Delta(G)$, has a complete subgraph with vertex set $\{2,3,5\}$,
    a contradiction. So, $\phi(\o G) = \phi(\o K)$, and hence $G =  K \times R_0$,  giving  case (b)(ii). 

    \medskip
    As the final case, we  assume  that $G$ has a minimal normal subgroup $L$, such that $L \leq N$ and
    $K/L \cong \SL 5$. We have two possible cases:
    
\smallskip     
(y): $L$ is the natural module for $K/L$.
    Then $\Delta(K)$ is the graph with vertex set \{2,3,5\} where $5$ is an isolated vertex and \(2\), \(3\) are adjacent, so we deduce that $p \neq 5$.
    Moreover, part (b) of Lemma~\ref{star3} yields that  $5$ is adjacent in $\Delta(G)$ only to the primes in $\V{G/K}$. 
 Thus, as $\Delta(G)$ is connected,  $\V{G/K} \neq \emptyset$, so $\V{G/K} = \{p \}$ and we have case (c)(i).

 \smallskip
  (yy): $L$ is the natural module for $K/L$ seen as a subgroup of $\rm{GL}_4(3)$. So, $\Delta(K)$ is the graph $3-2-5$ and consequently $p = 2$ and we have case (c)(ii).

    \medskip
    We now prove the ``if'' part of the statement, going through the various cases.

    \smallskip
    (a): If $G \cong \SL 4 \times R$ with $\V R = \V{G/K} = \{5\}$, then clearly $\Delta(G)$ is the graph $2-5-3$.
If $p \neq 5$, then  $5$ is adjacent only to $p$ in $\Delta(G)$ by part (c) of Lemma~\ref{star2}.
 By part (a) of  Lemma~\ref{star2}, $p$ is a complete vertex, and hence the only cut-vertex, of $\Delta(G)$. 

    \smallskip
    (b): We assume that $G = KR$ and that $N = K \cap R$ is a normal in $G$ of order $2^4$.

    In case (b)(i), since $G/N = K/N \times R/N$ and $\V{R/N} = \V{G/K} = \{p\}$ for some prime $p\neq 2$,
    part~(a) of Lemma~\ref{star3} and part (a) of Theorem~\ref{PSL2bis} yield that 
    the vertex $2$ is adjacent only to $p$ in $\Delta(G)$, so $p$ is a cut-vertex of $\Delta(G)$.
    By part (a) of Lemma~\ref{star2}, $p$ is a complete vertex, and hence the only cut-vertex, of $\Delta(G)$.

    In case (b)(ii), it is clear that $\Delta(G) = \Delta(K)$ is the graph $2-5-3$.

    \smallskip
    (c): We assume that there exists $L \nor G$, $L \leq K$,  such that $K/L \cong \SL 5$. 

    In case (c)(i), by part (b) of Lemma~\ref{star3} the vertex $5$ is adjacent only to $p$ ($p \neq 5$) in $\Delta(G)$
    and, by part (a) of the same lemma, $p$ is a complete vertex of $\Delta(G)$.

    In case (c)(ii), we prove that $\Delta(G) = \Delta(K)$, so $\Delta(G)$ is the graph $3-2-5$. To this end, it is enough to show that
    $3$ and $5$ are non-adjacent in $\Delta(G)$. 
    Since $|G/KR| \leq 2$, $KR$ contains a Sylow $3$-subgroup $Q$ of $G$; moreover, as $\V{R/N} \sbs \V{G/K} \sbs \{2\}$ and
    $|N/L| = 2$,
    it easily follows that, setting $Q_0 = Q \cap R$, $Q_0/L$ is abelian  and normal in $R/L$,  and hence in  $G/L$.
    Let $\lambda \in \irr L$  be a non-principal character and let $I = I_G(\lambda)$.
    An application of the Frattini argument, as in the proof of part (b)
    of Lemma~\ref{star3},  proves that $G$ splits over $L$, so  $\lambda$ extends to $I$.
    By~\cite[Lemma 2.6]{DPSS2}, $L \leq \zent{Q_0}$ and hence, since $I \cap K$ is a Sylow $3$-subgroup of $K$, we can assume
    $Q \leq I$. So, $Q/L$ is an abelian Sylow $3$-subgroup of $G/L$ and it is normal in $I/L$.
    Thus,  by Gallagher's theorem we deduce that
    every $\chi \in \irr G$ that lies over $\lambda$ has degree not divisible by $3$.
    Hence, if $\chi \in \irr G$ and $3$ divides $\chi(1)$, then $L \leq \ker{\chi}$ and $\chi \in \irr{G/L}$. 
    Now, an application of part (c) of Lemma~\ref{star2} yields that $5$ not adjacent to $3$ in $\Delta(G/L)$, and
    hence $3$ and $5$ are not adjacent in $\Delta(G)$. 
    
    So, in every case, $p$ is a cut-vertex of $\Delta(G)$ and, as $p$ is  also a complete vertex of $\Delta(G)$, there are
    no other cut-vertices in $\Delta(G)$. The proof is complete. 
  \end{proof}

\end{document}